\DeclareMathAlphabet{\mathbb}{U}{msb}{m}{n}
\numberwithin{equation}{section}
\let\c@equation\c@subsection
\newtheorem{cor}[subsection]{Corollary}
\newtheorem{lem}[subsection]{Lemma}
\newtheorem{prop}[subsection]{Proposition}
\newtheorem{thm}[subsection]{Theorem}
\theoremstyle{definition}
\newtheorem{defn}[subsection]{Definition}
\newtheorem{rem}[subsection]{Remark}
\newtheorem{constr}[subsection]{Construction}
\newtheorem{ex}[subsection]{Example}
\theoremstyle{remark}
\renewcommand{\eqref}[1]{(\ref{#1})}
\tikzset{shorten <>/.style={shorten >=#1,shorten <=#1}}
\newcommand{\nc}{\newcommand}
\nc{\renc}{\renewcommand}
\nc{\ssec}{\subsection}
\nc{\sssec}{\subsubsection}
\nc{\on}{\operatorname}
\nc{\term}[1]{#1\xspace}
\DeclareMathSymbol{A}{\mathalpha}{operators}{`A}
\DeclareMathSymbol{B}{\mathalpha}{operators}{`B}
\DeclareMathSymbol{C}{\mathalpha}{operators}{`C}
\DeclareMathSymbol{D}{\mathalpha}{operators}{`D}
\DeclareMathSymbol{E}{\mathalpha}{operators}{`E}
\DeclareMathSymbol{F}{\mathalpha}{operators}{`F}
\DeclareMathSymbol{G}{\mathalpha}{operators}{`G}
\DeclareMathSymbol{H}{\mathalpha}{operators}{`H}
\DeclareMathSymbol{I}{\mathalpha}{operators}{`I}
\DeclareMathSymbol{J}{\mathalpha}{operators}{`J}
\DeclareMathSymbol{K}{\mathalpha}{operators}{`K}
\DeclareMathSymbol{L}{\mathalpha}{operators}{`L}
\DeclareMathSymbol{M}{\mathalpha}{operators}{`M}
\DeclareMathSymbol{N}{\mathalpha}{operators}{`N}
\DeclareMathSymbol{O}{\mathalpha}{operators}{`O}
\DeclareMathSymbol{P}{\mathalpha}{operators}{`P}
\DeclareMathSymbol{Q}{\mathalpha}{operators}{`Q}
\DeclareMathSymbol{R}{\mathalpha}{operators}{`R}
\DeclareMathSymbol{S}{\mathalpha}{operators}{`S}
\DeclareMathSymbol{T}{\mathalpha}{operators}{`T}
\DeclareMathSymbol{U}{\mathalpha}{operators}{`U}
\DeclareMathSymbol{V}{\mathalpha}{operators}{`V}
\DeclareMathSymbol{W}{\mathalpha}{operators}{`W}
\DeclareMathSymbol{X}{\mathalpha}{operators}{`X}
\DeclareMathSymbol{Y}{\mathalpha}{operators}{`Y}
\DeclareMathSymbol{Z}{\mathalpha}{operators}{`Z}
\nc{\sA}{\ensuremath{\mathcal{A}}\xspace}
\nc{\sB}{\ensuremath{\mathcal{B}}\xspace}
\nc{\sC}{\ensuremath{\mathcal{C}}\xspace}
\nc{\sD}{\ensuremath{\mathcal{D}}\xspace}
\nc{\sE}{\ensuremath{\mathcal{E}}\xspace}
\nc{\sF}{\ensuremath{\mathcal{F}}\xspace}
\nc{\sG}{\ensuremath{\mathcal{G}}\xspace}
\nc{\sH}{\ensuremath{\mathcal{H}}\xspace}
\nc{\sI}{\ensuremath{\mathcal{I}}\xspace}
\nc{\sJ}{\ensuremath{\mathcal{J}}\xspace}
\nc{\sK}{\ensuremath{\mathcal{K}}\xspace}
\nc{\sL}{\ensuremath{\mathcal{L}}\xspace}
\nc{\sM}{\ensuremath{\mathcal{M}}\xspace}
\nc{\sN}{\ensuremath{\mathcal{N}}\xspace}
\nc{\sO}{\ensuremath{\mathcal{O}}\xspace}
\nc{\sP}{\ensuremath{\mathcal{P}}\xspace}
\nc{\sQ}{\ensuremath{\mathcal{Q}}\xspace}
\nc{\sR}{\ensuremath{\mathcal{R}}\xspace}
\nc{\sS}{\ensuremath{\mathcal{S}}\xspace}
\nc{\sT}{\ensuremath{\mathcal{T}}\xspace}
\nc{\sU}{\ensuremath{\mathcal{U}}\xspace}
\nc{\sV}{\ensuremath{\mathcal{V}}\xspace}
\nc{\sW}{\ensuremath{\mathcal{W}}\xspace}
\nc{\sX}{\ensuremath{\mathcal{X}}\xspace}
\nc{\sY}{\ensuremath{\mathcal{Y}}\xspace}
\nc{\sZ}{\ensuremath{\mathcal{Z}}\xspace}
\nc{\bA}{\ensuremath{\mathbf{A}}\xspace}
\nc{\bB}{\ensuremath{\mathbf{B}}\xspace}
\nc{\bC}{\ensuremath{\mathbf{C}}\xspace}
\nc{\bD}{\ensuremath{\mathbf{D}}\xspace}
\nc{\bE}{\ensuremath{\mathbf{E}}\xspace}
\nc{\bF}{\ensuremath{\mathbf{F}}\xspace}
\nc{\bG}{\ensuremath{\mathbf{G}}\xspace}
\nc{\bH}{\ensuremath{\mathbf{H}}\xspace}
\nc{\bI}{\ensuremath{\mathbf{I}}\xspace}
\nc{\bJ}{\ensuremath{\mathbf{J}}\xspace}
\nc{\bK}{\ensuremath{\mathbf{K}}\xspace}
\nc{\bL}{\ensuremath{\mathbf{L}}\xspace}
\nc{\bM}{\ensuremath{\mathbf{M}}\xspace}
\nc{\bN}{\ensuremath{\mathbf{N}}\xspace}
\nc{\bO}{\ensuremath{\mathbf{O}}\xspace}
\nc{\bP}{\ensuremath{\mathbf{P}}\xspace}
\nc{\bQ}{\ensuremath{\mathbf{Q}}\xspace}
\nc{\bR}{\ensuremath{\mathbf{R}}\xspace}
\nc{\bS}{\ensuremath{\mathbf{S}}\xspace}
\nc{\bT}{\ensuremath{\mathbf{T}}\xspace}
\nc{\bU}{\ensuremath{\mathbf{U}}\xspace}
\nc{\bV}{\ensuremath{\mathbf{V}}\xspace}
\nc{\bW}{\ensuremath{\mathbf{W}}\xspace}
\nc{\bX}{\ensuremath{\mathbf{X}}\xspace}
\nc{\bY}{\ensuremath{\mathbf{Y}}\xspace}
\nc{\bZ}{\ensuremath{\mathbf{Z}}\xspace}
\nc{\dA}{\ensuremath{\mathds{A}}\xspace}
\nc{\dB}{\ensuremath{\mathds{B}}\xspace}
\nc{\dC}{\ensuremath{\mathds{C}}\xspace}
\nc{\dD}{\ensuremath{\mathds{D}}\xspace}
\nc{\dE}{\ensuremath{\mathds{E}}\xspace}
\nc{\dF}{\ensuremath{\mathds{F}}\xspace}
\nc{\dG}{\ensuremath{\mathds{G}}\xspace}
\nc{\dH}{\ensuremath{\mathds{H}}\xspace}
\nc{\dI}{\ensuremath{\mathds{I}}\xspace}
\nc{\dJ}{\ensuremath{\mathds{J}}\xspace}
\nc{\dK}{\ensuremath{\mathds{K}}\xspace}
\nc{\dL}{\ensuremath{\mathds{L}}\xspace}
\nc{\dM}{\ensuremath{\mathds{M}}\xspace}
\nc{\dN}{\ensuremath{\mathds{N}}\xspace}
\nc{\dO}{\ensuremath{\mathds{O}}\xspace}
\nc{\dP}{\ensuremath{\mathds{P}}\xspace}
\nc{\dQ}{\ensuremath{\mathds{Q}}\xspace}
\nc{\dR}{\ensuremath{\mathds{R}}\xspace}
\nc{\dS}{\ensuremath{\mathds{S}}\xspace}
\nc{\dT}{\ensuremath{\mathds{T}}\xspace}
\nc{\dU}{\ensuremath{\mathds{U}}\xspace}
\nc{\dV}{\ensuremath{\mathds{V}}\xspace}
\nc{\dW}{\ensuremath{\mathds{W}}\xspace}
\nc{\dX}{\ensuremath{\mathds{X}}\xspace}
\nc{\dY}{\ensuremath{\mathds{Y}}\xspace}
\nc{\dZ}{\ensuremath{\mathds{Z}}\xspace}
\nc{\bbA}{\ensuremath{\mathbb{A}}\xspace}
\nc{\bbB}{\ensuremath{\mathbb{B}}\xspace}
\nc{\bbC}{\ensuremath{\mathbb{C}}\xspace}
\nc{\bbD}{\ensuremath{\mathbb{D}}\xspace}
\nc{\bbE}{\ensuremath{\mathbb{E}}\xspace}
\nc{\bbF}{\ensuremath{\mathbb{F}}\xspace}
\nc{\bbG}{\ensuremath{\mathbb{G}}\xspace}
\nc{\bbH}{\ensuremath{\mathbb{H}}\xspace}
\nc{\bbI}{\ensuremath{\mathbb{I}}\xspace}
\nc{\bbJ}{\ensuremath{\mathbb{J}}\xspace}
\nc{\bbK}{\ensuremath{\mathbb{K}}\xspace}
\nc{\bbL}{\ensuremath{\mathbb{L}}\xspace}
\nc{\bbM}{\ensuremath{\mathbb{M}}\xspace}
\nc{\bbN}{\ensuremath{\mathbb{N}}\xspace}
\nc{\bbO}{\ensuremath{\mathbb{O}}\xspace}
\nc{\bbP}{\ensuremath{\mathbb{P}}\xspace}
\nc{\bbQ}{\ensuremath{\mathbb{Q}}\xspace}
\nc{\bbR}{\ensuremath{\mathbb{R}}\xspace}
\nc{\bbS}{\ensuremath{\mathbb{S}}\xspace}
\nc{\bbT}{\ensuremath{\mathbb{T}}\xspace}
\nc{\bbU}{\ensuremath{\mathbb{U}}\xspace}
\nc{\bbV}{\ensuremath{\mathbb{V}}\xspace}
\nc{\bbW}{\ensuremath{\mathbb{W}}\xspace}
\nc{\bbX}{\ensuremath{\mathbb{X}}\xspace}
\nc{\bbY}{\ensuremath{\mathbb{Y}}\xspace}
\nc{\bbZ}{\ensuremath{\mathbb{Z}}\xspace}
\nc{\mrm}[1]{\ensuremath{\mathrm{#1}}\xspace}
\nc{\mbf}[1]{\ensuremath{\mathbf{#1}}\xspace}
\nc{\mcal}[1]{\ensuremath{\mathcal{#1}}\xspace}
\nc{\msc}[1]{\ensuremath{\mathscr{#1}}\xspace}
\renc{\bar}[1]{\overline{#1}}
\let\sectsign\S
\let\S\relax
\nc{\sub}{\subset}
\nc{\too}{\longrightarrow}
\nc{\hook}{\hookrightarrow}
\nc*{\hooklongrightarrow}{\ensuremath{\lhook\joinrel\relbar\joinrel\rightarrow}}
\nc{\hooklong}{\hooklongrightarrow}
\nc{\twoheadlongrightarrow}{\relbar\joinrel\twoheadrightarrow}
\nc{\shiso}{\approx}
\nc{\isoto}{\xrightarrow{\sim}}
\nc{\isofrom}{\xleftarrow{\sim}}
\renc{\ge}{\geqslant}
\renc{\le}{\leqslant}
\renc{\geq}{\geqslant}
\renc{\leq}{\leqslant}
\nc{\id}{\mathrm{id}}
\nc{\can}{\mathrm{can}}
\DeclareMathOperator{\rk}{\mathrm{rk}}
\DeclareMathOperator{\Hom}{\mathrm{Hom}}
\nc{\uHom}{\underline{\smash{\Hom}}}
\DeclareMathOperator{\End}{\mathrm{End}}
\DeclareMathOperator{\Sym}{\mathrm{Sym}}
\nc{\Pre}{\mathrm{PSh}{}}
\nc{\uEnd}{\underline{\smash{\End}}}
\renc{\lim}{\operatorname*{lim}}
\nc{\colim}{\operatorname*{colim}}
\nc{\Cofib}{\on{Cofib}}
\nc{\Fib}{\on{Fib}}
\nc{\initial}{\varnothing}
\nc{\op}{\mathrm{op}}
\nc{\bDelta}{\mbf{\Delta}}
\nc{\DM}{\mbf{DM}}
\nc{\eff}{\mathrm{eff}}
\nc{\veff}{\mathrm{veff}}
\nc{\hzmw}{\mathrm{H}\tilde{\Z}}
\nc{\hz}{\mathrm{H}\Z}
\nc{\cyc}{{\mrm{cyc}}}
\nc{\betah}{{\tilde{\beta}}}
\nc{\corr}{{\on{corr}}}
\nc{\fet}{{\mrm{f\acute et}}}
\nc{\fsyn}{{\mrm{fsyn}}}
\nc{\fflat}{{\mrm{fflat}}}
\nc{\syn}{{\mrm{syn}}}
\nc{\Gor}{{\mrm{Gor}}}
\nc{\gor}{{\mrm{gor}}}
\nc{\Pic}{\mrm{Pic}}
\nc{\perf}{\mrm{perf}}
\nc{\oblv}{\on{oblv}}
\nc{\exact}{\on{exact}}
\nc{\F}{{\on{F}}}
\nc{\clopen}{{\mrm{clopen}}}
\nc{\B}{\mrm{B}}
\nc{\D}{\mrm{D}}
\nc{\Fin}{\on{Fin}}
\nc{\Cut}{\on{Cut}}
\nc{\Cart}{\on{Cart}}
\nc{\pairs}{\mathsf{pairs}}
\nc{\Pairs}{\mathrm{Pair}}
\nc{\Trip}{\mathrm{Trip}}
\nc{\Lab}{\mathrm{Lab}}
\nc{\coCart}{\mathrm{coCart}}
\nc{\RKE}{\mathrm{RKE}}
\nc{\strict}{\mathrm{strict}}
\nc{\Emb}{\mathrm{Emb}}
\nc{\EMB}{\mathcal{E}\mathrm{mb}}
\nc{\Split}{\mathrm{Split}}
\nc{\Set}{\mathrm{Set}}
\nc{\sSets}{\mathrm{sSets}}
\nc{\pb}{\mathrm{pb}}
\nc{\lci}{\mathrm{lci}}
\nc{\fib}{\mathrm{fib}}
\nc{\cofib}{\mathrm{cofib}}
\nc{\diff}{\mrm{diff}}
\nc{\gp}{\mrm{gp}}
\nc{\chr}{\mrm{char}}
\nc{\mgp}{\mrm{mot-gp}}
\nc{\FSyn}{\mrm{FSyn}}
\nc{\FFlat}{{\mrm{FFlat}}}
\nc{\FEt}{\mrm{FEt}}
\nc{\Spc}{\mrm{Spc}}
\nc{\Ob}{\mrm{Ob}}
\nc{\Spt}{\mrm{Spt}}
\nc{\T}{\bT}
\nc{\suspinf}{\Sigma^\infty}
\nc{\h}{\mrm{h}}
\nc{\uhom}{\underline{\mathrm{Hom}}}
\nc{\umap}{\underline{\mathrm{Maps}}}
\renc{\H}{\bH}
\nc{\Einfty}{{\sE_\infty}}
\nc{\Eone}{{\sE_1}}
\nc{\Stab}{\mrm{Stab}}
\nc{\lax}{{\mrm{lax}}}
\nc{\cocart}{{\mrm{cocart}}}
\nc{\Sch}{\mrm{Sch}}
\nc{\dSch}{\mrm{dSch}}
\nc{\Aff}{\mrm{Aff}}
\nc{\SmAff}{\mrm{SmAff}}
\nc{\dAff}{\mrm{dAff}}
\nc{\Fr}{\on{Fr}}
\nc{\A}{\mathbf{A}}
\nc{\N}{\mathbf{N}}
\nc{\Z}{\mathbf{Z}}
\nc{\Q}{\mathbf{Q}}
\nc{\GW}{\mathbf{GW}}
\nc{\GWspace}{\mathcal{GW}}
\nc{\KGW}{\mathbb G\mathrm W}
\nc{\GWspectrum}{\mathrm{GW}}
\nc{\uGW}{\underline{\mathrm{GW}}}
\nc{\uW}{\underline{\mathrm{W}}}
\nc{\uZ}{\underline{\mathbf{Z}}}
\nc{\Oo}{\mathcal{O}} 
\nc{\red}{{\on{red}}}
\nc{\Voev}{{\on{Voev}}}
\nc{\Corr}{\mrm{Corr}}
\nc{\Span}{\mathbf{Corr}}
\nc{\Gap}{\mrm{Gap}}
\nc{\Filt}{\mrm{Filt}}
\nc{\Corrfr}{\Corr^{\fr}}
\nc{\Corrvfr}{\Corr^{\Vfr}}
\nc{\Spec}{\on{Spec}}
\nc{\Sm}{\mrm{Sm}}
\nc{\QSm}{\mrm{QSm}}
\nc{\Gm}{\mathbf{G}_{\mrm{m}}}
\renc{\P}{\bP}
\nc{\Nis}{\mathrm{Nis}}
\nc{\Zar}{\mathrm{Zar}}
\nc{\et}{\mathrm{\acute et}}
\nc{\all}{\mathrm{all}}
\nc{\fold}{\mathrm{fold}}
\nc{\Fun}{\mathrm{Fun}}
\nc{\Ho}{\mathrm{Ho}}
\nc{\Segal}{\mathrm{Segal}}
\nc{\Mon}{\mrm{Mon}{}}
\nc{\Ab}{\mrm{Ab}}
\nc{\Gr}{\mrm{Gr}}
\nc{\GrO}{\mrm{GrO}}
\nc{\Sh}{\on{Sh}}
\nc{\M}{\mrm{M}}
\nc{\Lhtp}{L_{\A^1}}
\nc{\Lmot}{L_{\mrm{mot}}}
\nc{\mot}{\mrm{mot}}
\nc{\SH}{\mbf{SH}}
\nc{\RR}{\mbf{R}}
\nc{\CC}{\mbf{C}}
\nc{\Mod}{\mrm{Mod}}
\nc{\QCoh}{\mrm{QCoh}}
\nc{\MonUnit}{\mbf{1}}
\nc{\tr}{\on{tr}}
\nc{\vop}{\mrm{vop}}
\nc{\fr}{{\on{fr}}}
\nc{\Ar}{\mrm{Ar}}
\nc{\Vfr}{\on{Vfr}}
\nc{\frdiff}{{\on{frdiff}}}
\nc{\frGys}{\on{frGys}}
\nc{\SHfr}{\SH^{\fr}}
\nc{\SHfrdiff}{\SH^{\frdiff}}
\nc{\SHfrGys}{\SH^{\frGys}}
\nc{\InftyCat}{\infty\textnormal{-}\mrm{Cat}}
\nc{\TriCat}{\mathrm{TriCat}}
\nc{\Cat}{\mathrm{1\textnormal{-}Cat}}
\nc{\Th}{\on{Th}}
\def\G{\bG}
\nc{\CMon}{\mrm{CMon}{}}
\nc{\CAlg}{\mrm{CAlg}{}}
\nc{\MGL}{\mrm{MGL}}
\nc{\PMGL}{\mathrm{PMGL}}
\nc{\KGL}{\mrm{KGL}}
\nc{\kgl}{\mrm{kgl}}
\nc{\KQ}{\mrm{KQ}}
\nc{\kq}{\mrm{kq}}
\nc{\KSp}{\mrm{KSp}}
\nc{\ksp}{\mrm{ksp}}
\nc{\MSL}{\mrm{MSL}}
\nc{\MSp}{\mrm{MSp}}
\nc{\Seg}{\mrm{Seg}{}}
\nc{\Tw}{\mrm{Tw}}
\nc{\sslash}{/\mkern-6mu/}
\nc{\PrL}{\mrm{Pr}^\mrm{L}}
\nc{\PrR}{\mrm{Pr}^\mrm{R}}
\nc{\pr}{\mrm{pr}}
\nc{\efr}{\mrm{efr}}
\nc{\nfr}{\mrm{nfr}}
\nc{\dfr}{\mrm{fr}}
\nc{\tfr}{\mrm{tfr}}
\nc{\Vect}{\mrm{Vect}}
\nc{\sVect}{\mrm{sVect}}
\nc{\fix}{\mrm{fix}}
\nc{\Hilb}{\mathrm{Hilb}}
\nc{\flci}{\mathrm{flci}}
\nc{\Isom}{\mathrm{Isom}}
\nc{\GL}{\mathrm{GL}}
\nc{\BGL}{\mathrm{BGL}}
\nc{\SL}{\mathrm{SL}}
\nc{\Sp}{\mathrm{Sp}}
\nc{\fin}{\mathrm{fin}}
\nc{\cl}{\mathrm{cl}}
\nc{\cn}{\mathrm{cn}}
\nc{\sm}{\mathrm{sm}}
\nc{\heart}{\heartsuit}
\renc{\o}{\mrm{or}}
\nc{\GWpsh}{\mrm{GW}}
\nc{\ev}{\mrm{ev}}
\nc{\Ann}{\mrm{Ann}}
\nc{\FSYN}{\mathcal{FS}\mrm{yn}}
\nc{\FFLAT}{\mathcal{FF}\mrm{lat}}
\nc{\mrk}{\mrm{mrk}}%
\nc{\FFmrk}{\mathcal{FF}\mrm{lat}^{\mrk}}
\nc{\FFnu}{\mathcal{FF}\mrm{lat}^{\mrm{nu}}}
\nc{\FFbas}{\mathcal{FF}\mrm{lat}^{\mrm{bas}}}
\nc{\FQSM}{\mathcal{FQS}\mathrm{m}}
\nc{\Quot}{\mathrm{Quot}}    
\nc{\COH}{\mathcal{C}\mathrm{oh}}
\let\phi\varphi
\let\emptyset\varnothing
\nc{\Rees}{\mrm{Rees}}
\nc{\robber}{\mathcal{R}}%
\nc{\mv}{\mrm{mv}}
\nc{\const}{\mrm{const}}
\nc{\robbermv}{\robber^{\mv}}
\nc{\robberconst}{\robber^{\const}}
\nc{\robbernot}{\robber_0}%
\nc{\sectionmv}{i^{\mv}}%
\nc{\sectionconst}{i^{\const}}%
\nc{\sectionnot}{i}%
\nc{\st}{\mathrm{st}}
\nc{\FGor}{{\mrm{FGor}}}     
\nc{\fgor}{{\mrm{fgor}}}
\nc{\Aug}{{\mrm{Aug}}}  
\nc{\Uni}{{\mrm{Uni}}}  
\nc{\ori}{{\mrm{or}}}        
\nc{\sym}{{\mrm{sym}}}  
\nc{\alt}{{\mrm{alt}}}   
\nc{\nonu}{{\mrm{nu}}}  
\nc{\Ima}{\operatorname{Im}} 
\nc{\Hyp}{\operatorname{Hyp}}
\nc{\tel}{\mathrm{tel}}
\nc{\FQSm}{\mathrm{FQSm}}
\nc{\Perf}{\mathrm{Perf}}
\nc{\inftyCat}{\term{$\infty$-category}}
\nc{\inftyCats}{\term{$\infty$-categories}}
\nc{\inftyOneCat}{\term{$(\infty,1)$-category}}
\nc{\inftyOneCats}{\term{$(\infty,1)$-categories}}
\nc{\inftyGrpd}{\term{$\infty$-groupoid}}
\nc{\inftyGrpds}{\term{$\infty$-groupoids}}
\nc{\inftyTop}{\term{$\infty$-topos}}
\nc{\inftyTops}{\term{$\infty$-toposes}}
\nc{\inftyTwoCat}{\term{$(\infty,2)$-category}}
\nc{\inftyTwoCats}{\term{$(\infty,2)$-categories}}
\title{Hermitian K-theory via oriented Gorenstein algebras}
\author[M. Hoyois]{Marc Hoyois}
\address{Fakultät für Mathematik\\
Universität Regensburg\\
Universitätsstr. 31\\
93040 Regensburg\\
Germany}
\email{\href{mailto:marc.hoyois@ur.de}{marc.hoyois@ur.de}}
\urladdr{\url{http://www.mathematik.ur.de/hoyois/}}
\author[J. Jelisiejew]{Joachim Jelisiejew}
\address{Faculty of Mathematics, Informatics
and Mechanics\\
University of Warsaw\\
Banacha 2\\
02-097 Warsaw\\
Poland}
\email{\href{mailto:jjelisiejew@mimuw.edu.pl}{jjelisiejew@mimuw.edu.pl}}
\urladdr{\url{https://www.mimuw.edu.pl/~jjelisiejew/}}
\author[D. Nardin]{Denis Nardin}
\address{Fakultät für Mathematik\\
Universität Regensburg\\
Universitätsstr. 31\\
93040 Regensburg\\
Germany}
\email{\href{mailto:denis.nardin@ur.de}{denis.nardin@ur.de}}
\urladdr{\url{https://homepages.uni-regensburg.de/~nad22969/}}
\author[M. Yakerson]{Maria Yakerson}
\address{Institute for Mathematical Research (FIM)\\
ETH Z\"urich \\
R\"amistr. 101\\  
8092 Z\"urich\\
Switzerland}
\email{\href{mailto:maria.yakerson@math.ethz.ch}{maria.yakerson@math.ethz.ch}}
\urladdr{\url{https://www.muramatik.com}}
\thanks{M.H., D.N., and M.Y.\ were partially supported by SFB 1085 ``Higher invariants''}
\thanks{J.J. was supported by NCN grant 2017/26/D/ST1/00913 and by the START
fellowship of the Foundation for Polish Science}
\keywords{Hermitian K-theory, Gorenstein algebras, motivic homotopy theory, framed correspondences}
\date{\today}
\begin{document}

\begin{abstract}
We show that the hermitian K-theory space of a commutative ring $R$ can be identified, up to $\A^1$-homotopy, with the group completion of the groupoid of oriented finite Gorenstein $R$-algebras, i.e., finite locally free $R$-algebras with trivialized dualizing sheaf.
We deduce that hermitian K-theory is universal among generalized motivic cohomology theories with transfers along oriented finite Gorenstein morphisms. As an application, we obtain a Hilbert scheme model for hermitian K-theory as a motivic space. We also give an application to computational complexity: we prove that $1$-generic minimal border rank tensors degenerate to the big Coppersmith--Winograd tensor.
\end{abstract}

\maketitle

\parskip 0.2cm

\parskip 0pt
\setcounter{tocdepth}{1}
\tableofcontents

\parskip 0.2cm
\vspace{-2em}

\section{Introduction}

A recent development in motivic homotopy theory is the theory of framed transfers, introduced in \cite{voevodsky2001notes} and studied further in \cite{garkusha2014framed}, \cite{deloop1} and other works. The fundamental results of this theory are as follows. First, the \emph{reconstruction theorem} \cite[Theorem~3.5.12]{deloop1} states that every generalized motivic cohomology theory has a unique structure of (coherent) framed transfers, i.e., has covariance along finite syntomic maps of schemes equipped with a trivialization of the cotangent complex. Second, the \emph{motivic recognition principle} \cite[Theorem~3.5.14]{deloop1} shows that every $\P^1$-connective (also called ``very effective") motivic spectrum is a suspension spectrum on its infinite $\P^1$-loop space, when framed transfers on the space are taken into account. With these tools in hand, several important motivic spectra can be described geometrically as framed suspension spectra of certain moduli stacks of schemes. For example, the algebraic cobordism spectrum is the framed suspension spectrum of the stack of finite syntomic schemes \cite[Theorem~3.4.1]{deloop3}, and the effective algebraic K-theory spectrum is the framed suspension spectrum of the stack of finite flat schemes \cite[Theorem~5.4]{robbery}. 

From the point of view of moduli of schemes, there are three natural conditions of interest: Cohen--Macaulay, Gorenstein, and local complete intersection,
see~\cite[Chapters~8--9]{HarDeform}. Being Cohen--Macaulay is automatic for finite
schemes, so by the discussion above, the first and third conditions relate to algebraic K-theory and
algebraic cobordism, respectively.

In this article, we connect the second condition, being Gorenstein, to another important cohomology theory, namely, hermitian K-theory. We show that, over a field of characteristic not $2$, hermitian K-theory is the framed suspension spectrum of the stack of finite Gorenstein algebras equipped with an \emph{orientation}, meaning a trivialization of the dualizing line bundle (Theorem~\ref{thm:KQ-kq-FGor}):
\[\kq \simeq  \Sigma^\infty_\fr \FGor^\o.\]
 This identification allows us to complete the description of all well-known generalized motivic cohomology theories as motivic spectra with framed transfers, which we summarize in \sectsign\ref{ssec:big} below (see also Theorem~\ref{thm:all mot spectra}). 

The infinite $\P^1$-loop space of $\kq$ is motivically equivalent to the group completion of the stack $\Vect^\sym$ of vector bundles with a non-degenerate symmetric bilinear form.
Using the motivic recognition principle, the equivalence $\kq \simeq \Sigma^\infty_\fr \FGor^\o$ follows from the fact that the forgetful map of commutative monoids $\FGor^\o \to \Vect^\sym$ induces an $\A^1$-equivalence after group completion (Theorem~\ref{thm:main-gp}); the $\A^1$-inverse map is given by sending a symmetric vector bundle $V$ to $[V \oplus \Oo[x]/x^2] - [\Oo[x]/x^2] $. To prove this result, we employ several explicit $\A^1$-homotopies. A key ingredient is the intermediate notion of an \emph{isotropically augmented} oriented Gorenstein algebra, which is a condition that allows to split off a copy of the dual numbers in a controlled way.

We would like to emphasize that, although Theorem~\ref{thm:main-gp} is analogous to~\cite[Theorem~3.1]{robbery}, the proof in \emph{loc.\ cit.}\ cannot be applied in the hermitian context. Indeed, while the square-zero extension functor $\Vect_d \to \FFlat_{d+1}$ is an $\A^1$-equivalence, the analogous functor $\Vect^\sym_d \to \FGor^\o_{d+2}$, which  adds a copy of the dual numbers, is not an $\A^1$-equivalence; we show this explicitly for the base field $\bR$ in Proposition~\ref{prop: square zero ext over R}. Instead, $\Vect^\sym_d$ is $\A^1$-equivalent to the substack of \emph{isotropic} oriented Gorenstein algebras $\FGor^{\o,0}_{d+2}$, defined by the condition that the orientation vanishes at the unit.

As a corollary of our results, we get a Hilbert scheme model for the Grothendieck–Witt space $\GWspace$ (Corollary~\ref{cor:Hilb-FGor}): we show that it is motivically equivalent to $\Z \times \Hilb_\infty^{\Gor,\o}(\A^\infty)$, where $\Hilb_d^{\Gor,\o}(\A^n)$ is the Hilbert scheme of degree $d$ finite Gorenstein subschemes of $\A^n$ equipped with an orientation. 
We also give a new proof of the motivic equivalence $\GWspace \simeq \Z\times \GrO_\infty$ due to Schlichting and Tripathi \cite{SchlichtingTripathi}, where $\GrO_d$ is the orthogonal Grassmannian (see Corollary~\ref{cor: Vect^bil and GrO}).
A notable difference between these two models is that, when 2 is not invertible, the oriented Hilbert scheme gives the ``genuine symmetric'' variant of Grothendieck--Witt theory defined in \cite{HermitianI,HermitianII}, whereas the orthogonal Grassmannian corresponds to the ``genuine even'' variant.

\begin{rem}
	None of the techniques in this paper require $2$ to be invertible. This assumption only appears in statements that involve the motivic spectra $\KQ$ or $\kq$, which so far have only been defined over $\Z[\tfrac 12]$-schemes (see however Remark~\ref{rem:2}).
\end{rem}

Our arguments have an unexpected direct application to complexity theory. One of the central problems there is bounding the asymptotic complexity of matrix multiplication~\cite{burgisser_clausen_shokrollahi, Landsberg__complexity_book}. The current algorithm giving the upper bound (called the \emph{laser method}) rests on the existence of certain tensors of minimal border rank, most notably the big Coppersmith--Winograd tensor $\mrm{CW}_q$. In Corollary~\ref{cor:degTensors} we show that \emph{every} $1$-generic tensor of minimal border rank degenerates to $\mrm{CW}_q$, thus the latter is the most degenerate one; it has the smallest value, which is very surprising given that the whole algorithm depends critically on the value of $\mrm{CW}_q$. Equally surprising is the fact that this result is seemingly not known to researchers in tensor world, even though it is much easier to obtain than our main results, since the degeneration need not be canonical.

Oriented Gorenstein algebras are also called commutative Frobenius algebras and appear in a different context: they classify 2-dimensional topological field theories~\cite[Section~1.1]{LurieTFT}. 

\subsection{Framed models for motivic spectra}\label{ssec:big} 
We give here an overview of the known framed models for motivic spectra. We work over a base field $k$ for simplicity and refer to Section~\ref{sec:framed-models} for more detailed statements and references.
Consider the following presheaves of $\infty$-groupoids on the category of smooth $k$-schemes:
\begin{itemize}
	\item $\uZ$ is the constant sheaf with fiber $\Z$;
	\item $\uGW$ is the sheaf of unramified Grothendieck--Witt groups;
	\item $\Vect$ is the groupoid of vector bundles;
	\item $\Vect^\sym$ is the groupoid of non-degenerate symmetric bilinear forms;
	\item $\FFlat$ is the groupoid of finite flat schemes;
	\item $\FGor^\o$ is the groupoid of oriented finite Gorenstein schemes;
	\item $\FSyn$ is the groupoid of finite syntomic schemes;
	\item $\FSyn^\o$ is the groupoid of oriented finite syntomic schemes;
	\item $\FSyn^\fr$ is the $\infty$-groupoid of framed finite syntomic schemes \cite[3.5.17]{deloop1}.
\end{itemize}
Then the forgetful maps
\[\begin{tikzcd}
     & \FSyn  \ar{r} & \FFlat  \ar{r} & \Vect \ar{r} & \uZ \\
   \FSyn^\fr \ar{r} & \FSyn^\o \ar{u} \ar{r} & \FGor^\o \ar{r} \ar{u}& \Vect^\sym \ar{u}\ar{r} & \uGW\ar{u} 
\end{tikzcd}\]
induce, upon taking framed suspension spectra, the canonical morphisms of motivic $\Einfty$-ring spectra over $k$ (assuming $\operatorname{char} k\neq 2$ for $\kq$):
\[\begin{tikzcd}
     & \MGL  \ar{r} & \kgl  \ar["\simeq"]{r} & \kgl \ar{r} & \hz \\
   \MonUnit \ar{r} & \MSL \ar{u} \ar{r} & \kq \ar["\simeq"]{r} \ar{u}& \kq \ar{u}\ar{r} & \hzmw \ar{u} \rlap .
\end{tikzcd}\]
Here, $\MonUnit$ is the motivic sphere spectrum, $\MGL$ (resp.\ $\MSL$) is the algebraic (resp.\ special linear) cobordism spectrum, $\kgl$ (resp.\ $\kq$) is the very effective algebraic (resp.\ hermitian) K-theory spectrum, and $\hz$ (resp.\ $\hzmw$) is the motivic cohomology (resp.\ Milnor–Witt motivic cohomology) spectrum.

One application of these geometric models is that they allow us to describe modules over these motivic ring spectra in terms of the corresponding transfers. For example, we prove in Theorem~\ref{thm:kq-modules} that modules over hermitian K-theory are equivalent (at least in characteristic 0) to generalized motivic cohomology theories with coherent transfers along oriented finite Gorenstein maps. The fact that hermitian K-theory has such transfers is well-known (see for example \cite[\sectsign 0]{GilleTransfers}), and this result characterizes hermitian K-theory as the universal such cohomology theory. 

\subsection*{Acknowledgments} We are thankful to Tom Bachmann, Joseph M. Landsberg, Rahul Pandharipande and Burt Totaro for helpful discussions. We would like to thank SFB 1085 ``Higher invariants'' and Regensburg University for its hospitality. Yakerson was supported by a Hermann-Weyl-Instructorship and is grateful to the Institute of Mathematical Research (FIM) and to ETH Z\"urich for providing perfect working conditions.

\section{Oriented Gorenstein algebras}
In this section we introduce several types of algebras and their properties, which will later be used in the proof of the main theorem. All rings and algebras are assumed commutative.

Let $R$ be a base ring. An $R$-algebra $A$ is \emph{finite locally free} if it is finite, flat, and of finite presentation, or equivalently if $A$ is a locally free $R$-module of finite rank.
A finite locally free $R$-algebra $A$ is called a \emph{Gorenstein} $R$-algebra if its dualizing module $\omega_{A/R} = \Hom_R(A, R)$ is an invertible $A$-module~\cite[Proposition~21.5]{Eisenbud}. 
We denote by $\FGor(R)$ the groupoid of Gorenstein $R$-algebras.
We emphasize that for us a Gorenstein algebra is by definition finite locally free; in this article we do not consider more general Gorenstein algebras, so this usage will not be ambiguous.

\begin{defn}\label{dfn:oriented-Gorenstein-algebra}
 Let $A$ be a Gorenstein $R$-algebra. An \emph{orientation} of $A$ is an element $\varphi \in \omega_{A/R}$ that trivializes the dualizing module of $A$. Equivalently, $\varphi \colon A\to R$ is an $R$-linear homomorphism such that the bilinear form $B_\varphi(x,y)=\varphi(xy)$ on $A$ is non-degenerate (i.e., $B_\varphi$ induces an isomorphism $A \simeq A^\vee$). An \emph{oriented} Gorenstein $R$-algebra is a pair $(A, \varphi)$ where $\varphi$ is an orientation of the Gorenstein $R$-algebra $A$. 
We denote by $\FGor^\o(R)$ the groupoid of oriented Gorenstein $R$-algebras. 
 \end{defn}

\begin{rem}
An oriented Gorenstein $R$-algebra is the same thing as a commutative Frobenius algebra in the category of $R$-modules, in the sense of \cite[Definition 4.6.5.1]{HA}.
\end{rem}

\begin{rem}
Let $R$ be a field or, more generally, a semilocal ring. Then every Gorenstein $R$-algebra can be oriented, but the choice of an orientation is usually far from unique. For a Gorenstein algebra $A$ over a more general ring $R$ an orientation may not exist, since $\omega_{A/R}$ may be a non-trivial line bundle.
\end{rem}

\begin{ex} \label{ex:kx/x2}
 The algebra $R[x]/x^2$ can be equipped with the orientation $\varphi_0(r+sx)=s$, turning it into an oriented Gorenstein $R$-algebra.
 The corresponding bilinear form $B_{\varphi_0}$ has matrix $\left(\begin{smallmatrix}0 & 1\\ 1& 0\end{smallmatrix}\right)$ in the basis $(1,x)$.
 This algebra is therefore a refinement of the hyperbolic plane. As such, it will play a crucial role in Section~\ref{sec:gp}.
\end{ex}

\begin{ex}
More generally, consider the Gorenstein $R$-algebra $A = R[x]/x^{n+1}$. One can check that a functional $\varphi\colon A\to R$ is an orientation if and only if $\varphi(x^n) \in R^\times$. 
\end{ex}

\begin{rem}\label{rem:FGoror}
	The presheaves of groupoids $\FGor$ and $\FGor^\o$ are algebraic stacks.
	To see this, consider the universal finite flat family $p\colon U \to \FFlat$.
	By \cite[Tag 05P8]{stacks}, the locus $U_1\subset U$ where the sheaf $\omega_{U/\FFlat}$ is locally free of rank $1$ is open. By definition, $\FGor$ is the Weil restriction of $U_1$ along $p$, hence it is an open substack of $\FFlat$ \cite[\sectsign 7.6 Proposition 2(i)]{NeronModels}.
	The forgetful map $\FGor^\o\to \FGor$ is similarly the Weil restriction of a $\G_m$-torsor over $U\times_\FFlat\FGor$, hence it is affine. Alternatively, we can consider the vector bundle $E=\Spec\Sym(p_*\sO_U)$ over $\FFlat$. As a stack, $E$ classifies pairs $(A,\phi)$ where $A$ is a finite locally free $R$-algebra and $\phi\in\omega_{A/R}$. Thus, $\FGor^\o$ is the open substack of $E$ where the determinant of $B_\phi$ does not vanish. This shows moreover that the forgetful map $\FGor^\o\to \FFlat$ is affine, since the complement of $\FGor^\o$ in $E$ is cut out by a single equation (locally on $\FFlat$).
\end{rem}

\begin{lem}\label{lem:perpendiculars_to_ideals_are_intrinsic}
    Let $R$ be a ring, $A$ a Gorenstein $R$-algebra and $I\subset A$ an
    ideal. For every orientation $\varphi$ of $A$, we have $I^{\perp} = \Ann(I)$, where the orthogonal is taken with respect to $B_{\varphi}$.
\end{lem}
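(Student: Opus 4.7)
The plan is to prove both inclusions, with the non-trivial direction following from the non-degeneracy of $B_{\varphi}$ combined with the fact that $I$ is closed under multiplication by arbitrary elements of $A$.

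First, the inclusion $\Ann(I) \subseteq I^{\perp}$ is immediate: if $y \in A$ satisfies $xy = 0$ for every $x \in I$, then in particular $B_{\varphi}(x, y) = \varphi(xy) = \varphi(0) = 0$, so $y \in I^{\perp}$. This direction does not use that $I$ is an ideal or that $B_{\varphi}$ is non-degenerate.

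For the reverse inclusion $I^{\perp} \subseteq \Ann(I)$, suppose $y \in I^{\perp}$ and fix $x \in I$; I want to show $xy = 0$. The key observation is that since $I$ is an ideal, $ax \in I$ for every $a \in A$, so
\[ B_{\varphi}(xy, a) = \varphi(xya) = \varphi((ax)y) = B_{\varphi}(ax, y) = 0 \]
for all $a \in A$. Thus the linear functional $B_{\varphi}(xy, -) \colon A \to R$ vanishes identically. Because $\varphi$ is an orientation, $B_{\varphi}$ is non-degenerate, i.e.\ the map $A \to A^{\vee}$, $z \mapsto B_{\varphi}(z, -)$, is an isomorphism; therefore $xy = 0$. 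Since $x \in I$ was arbitrary, $y \in \Ann(I)$.

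I do not anticipate any obstacle: the argument is purely formal, using only the definition of $B_{\varphi}$, the fact that $I$ is an ideal, and the non-degeneracy built into the definition of an orientation. The statement (and proof) highlight why the perpendicular of an ideal is independent of the chosen orientation, which is the point of the lemma.
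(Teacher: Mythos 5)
Your proof is correct and is essentially the paper's argument: both directions rest on the identity $\varphi(xya)=B_\varphi(ax,y)$ (valid because $I$ is an ideal) together with the non-degeneracy of $B_\varphi$; you phrase the hard inclusion directly (the functional $B_\varphi(xy,-)$ vanishes, hence $xy=0$), while the paper states the contrapositive, which is the same reasoning.
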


\begin{proof}
    Take $a\in A$. If $aI = 0$ we have $B_{\varphi}(a, i) = \varphi(ai) =
    0$ for every $i\in I$, hence $a\in I^{\perp}$. Conversely, if $aI\neq 0$ then, since $B_{\varphi}$ is
    non-degenerate, there exist $i\in I$ and $a'\in A$ such that $B_{\varphi}(a', ai)\neq 0$. This means that 
    $B_{\varphi}(a,a'i)=\varphi(aa'i)=B_{\varphi}(a',ai)\neq 0$ so $a\not\in I^\perp$.
\end{proof}

Recall that for an $R$-algebra $A$ an \emph{augmentation} is an $R$-algebra map $e \colon A \to R$. When $(A,\varphi)$ is an oriented Gorenstein $R$-algebra, we denote by $e^*\colon R\to A$ the $R$-linear adjoint map, i.e., the map such that $B_\varphi(e^*\lambda,y)=\lambda e(y)$.

\begin{defn}\label{def:isotropic-aug}
	Let $(A,\phi)$ be an oriented Gorenstein $R$-algebra. An augmentation $e\colon A\to R$ is called \emph{isotropic} if $e^*(1)$ is isotropic, i.e., if $B_\phi(e^*(1),e^*(1))=0$.
	An \emph{isotropically augmented} oriented Gorenstein $R$-algebra is a triple $(A, \varphi, e)$ where $e$ is an isotropic augmentation of the oriented Gorenstein $R$-algebra $(A, \varphi)$. We denote by $\FGor^{\o,+}(R)$ the groupoid of isotropically augmented oriented Gorenstein $R$-algebras.
\end{defn}

\begin{prop}\label{prop:loosingorientation}
    Let $(A,\phi)$ be an oriented Gorenstein $R$-algebra and $e\colon A\to R$ an augmentation. Then
	 $e$ is isotropic if and only if $\Ann(\ker e)\subset \ker e$.
\end{prop}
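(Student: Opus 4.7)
Set $u := e^*(1)$. Unwinding the definition of the adjoint, we have $B_\phi(u,y)=e(y)$ for every $y\in A$; specializing to $y=u$ gives
\[ B_\phi(u,u) = e(u). \]
Hence $e$ is isotropic if and only if $u\in\ker e$, and the proposition will follow once we show that $\Ann(\ker e)=R\cdot u$, for then the inclusion $\Ann(\ker e)\subset \ker e$ is equivalent to $u\in\ker e$.

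The plan is to compute $\Ann(\ker e)$ in two steps. First, by \lemref{lem:perpendiculars_to_ideals_are_intrinsic} applied to the ideal $I=\ker e$, we have $\Ann(\ker e)=(\ker e)^{\perp}$ with respect to $B_\phi$. So it suffices to prove $(\ker e)^\perp=R\cdot u$. Second, one inclusion is immediate: since $B_\phi(u,y)=e(y)=0$ for every $y\in\ker e$, we get $u\in(\ker e)^\perp$, and hence $R\cdot u\subset(\ker e)^\perp$.

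For the reverse inclusion I would use the splitting $A=R\cdot 1\oplus \ker e$ of $R$-modules, which is available because the augmentation gives a retraction of the unit. Let $a\in(\ker e)^\perp$ and consider the $R$-linear functional $\lambda_a:=B_\phi(a,-)\colon A\to R$. By assumption $\lambda_a$ vanishes on $\ker e$, so with $c:=\lambda_a(1)\in R$ the functional $\lambda_a$ agrees with $c\cdot e$ on both summands and hence on all of $A$. Thus $B_\phi(a,y)=c\cdot e(y)=B_\phi(cu,y)$ for every $y\in A$, and the non-degeneracy of $B_\phi$ forces $a=cu\in R\cdot u$.

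The only potentially delicate point is the description $(\ker e)^\perp=R\cdot u$, which relies simultaneously on the splitting coming from the augmentation and on the non-degeneracy of $B_\phi$; once this is established, the equivalence $\Ann(\ker e)\subset \ker e\iff u\in\ker e\iff B_\phi(u,u)=0$ is formal.
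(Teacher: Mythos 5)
Your proof is correct and follows essentially the same route as the paper: both reduce to \lemref{lem:perpendiculars_to_ideals_are_intrinsic} to identify $\Ann(\ker e)$ with $(\ker e)^\perp$ and then observe that $(\ker e)^\perp$ is exactly $\Ima e^*=R\cdot e^*(1)$, so that isotropy ($e^*(1)\in\ker e$) is equivalent to $\Ann(\ker e)\subset\ker e$. The only cosmetic difference is that you verify $(\ker e)^\perp=R\cdot e^*(1)$ by an explicit computation using the splitting $A=R\cdot 1\oplus\ker e$ and non-degeneracy of $B_\phi$, whereas the paper obtains the same equality by dualizing the (split) short exact sequence $0\to\ker e\to A\to R\to 0$.
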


\begin{proof}
	 By definition, $e$ is isotropic if and only if the image of $e^*\colon R\to A$ is contained in $\ker e$.
	 By dualizing the short exact sequence of $R$-modules \[0\to\ker e\to A\stackrel e\to R\to 0,\] we see that $\Ima e^*=(\ker e)^\perp$.
	 By Lemma~\ref{lem:perpendiculars_to_ideals_are_intrinsic}, we get $\Ima e^*=\Ann(\ker e)$, whence the result.
\end{proof}

\begin{ex}\label{ex:isotropic}
Let $A$ be a finite algebra over a field $k$, so that $A$ is a finite product of local algebras
    $A_\mathfrak{m}$. A choice of augmentation is then tantamount to a choice of $\mathfrak m$ such that $k\to \kappa(\mathfrak{m})$ is an isomorphism. This augmentation is isotropic (for any choice of orientation) if and only if $A_{\mathfrak{m}} \not\simeq k$.
\end{ex}

\begin{rem}
	Suppose $R$ is a \emph{reduced} ring.
	If $(A,\phi)$ is an oriented Gorenstein $R$-algebra with augmentation $e$, then $e$ is isotropic if and only it is isotropic after base change to the residue fields of $R$ (where isotropicity is a simple geometric condition, see Example~\ref{ex:isotropic}). Indeed, if $R$ is reduced then the map $R\to \prod_{\mathfrak p} \kappa(\mathfrak p)$ is injective, where $\mathfrak p$ ranges over the minimal primes of $R$.
\end{rem}

\begin{rem}
	The condition $\Ann(\ker e)\subset \ker e$ in Proposition~\ref{prop:loosingorientation} is independent of the orientation $\phi$. Thus one could define isotropically augmented finite locally free algebras, but we do not know of an application for such a notion.
\end{rem}

\begin{defn}
Let $(A,\varphi,e) \in \FGor^{\o,+}(R)$. The element $e^\ast(1) \in A$ is called the \emph{local socle generator} of $(A,\varphi,e)$.
\end{defn}

By definition of $e^*$, the local socle generator $x=e^*(1)$ has the property that $B_\varphi(x,y)=e(y)$ for each $y\in A$. In particular, by the isotropy condition, we have $e(x) =B_\varphi(x,x) = 0$.
The name reflects the fact that $x$ generates the \emph{socle} (the annihilator of the maximal ideal) of the localization of $A$ at $\ker e$, see Lemma~\ref{lem:local_socle}(1) below.

\begin{ex}\label{ex:dualnumbers}
    The oriented Gorenstein $R$-algebra $(R[x]/x^2, \varphi_0)$ from Example~\ref{ex:kx/x2} has a unique isotropic augmentation $e_0$ given by $e_0(r+sx) = r$. Its local socle generator is $x$.
\end{ex}

\begin{ex}\label{ex:Gorenstein-algebras-from-bilinear-forms}
 Let $(V,B)$ be a non-degenerate symmetric bilinear form over a ring $R$. Then we can construct an algebra $A:=R[x]/x^2\oplus V$ with multiplication
 \[(r+sx,v)\cdot(r'+s'x,v'):=(rr'+(rs'+r's+B(v,v'))x, r'v+rv')\,.\]
 If we let $\varphi(r+sx,v)=s$ and $e(r+sx,v)=r$, then the triple $(A,\varphi,e)$ is an isotropically augmented oriented Gorenstein $R$-algebra, with local socle generator $x$. Note that the bilinear form $B_\phi$ is the direct sum of the hyperbolic form on $R[x]/x^2$ (with respect to the basis $(1,x)$) and the original form $B$ on $V$.
\end{ex}

\begin{lem}\label{lem:local_socle}
 Let $x$ be the local socle generator of $(A,\varphi,e)\in  \FGor^{\o,+}(R)$. Then
 \begin{enumerate}
     \item $x$ spans the $R$-module $\Ann(\ker e)$ (in particular, $R x \subset A$ is an
         ideal);
     \item $x^2=0$;
     \item $\varphi(x)=1$.
 \end{enumerate}
 In particular, there is a canonical copy of $R[x]/x^2$ inside $A$, with an induced orientation given by $\varphi(r+sx)=\varphi(1)r+s$.
\end{lem}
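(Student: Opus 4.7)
The plan is to derive each of (1)--(3) directly from the defining adjointness identity $B_\varphi(e^*(\lambda),y)=\lambda e(y)$, together with the isotropy condition $B_\varphi(x,x)=0$ and the results already proved. For (1), the proof of Proposition~\ref{prop:loosingorientation} records the identity $\Ima e^*=(\ker e)^\perp$, and Lemma~\ref{lem:perpendiculars_to_ideals_are_intrinsic} identifies this perpendicular with $\Ann(\ker e)$; since $e^*\colon R\to A$ is $R$-linear, its image is the cyclic submodule $R\cdot e^*(1)=Rx$, which gives both the spanning statement and the fact that $Rx$ is an ideal of $A$.

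For (2), the isotropy condition reads $B_\varphi(x,x)=0$, and by the adjointness identity this equals $e(x)$, so $x\in\ker e$; combined with (1) this yields $x^2\in x\cdot\ker e=0$. For (3), I would compute $\varphi(x)=B_\varphi(1,x)=B_\varphi(x,1)=e(1)=1$, using symmetry of $B_\varphi$ together with the defining equation of $e^*$.

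For the final assertion, (2) produces a well-defined $R$-algebra map $R[T]/T^2\to A$ sending $T\mapsto x$, and injectivity amounts to the $R$-linear independence of $1$ and $x$: if $r+sx=0$, applying $e$ gives $r=0$, and then applying $\varphi$ and using (3) gives $s=0$. The induced orientation is the restriction of $\varphi$, evaluating as $\varphi(r+sx)=r\varphi(1)+s\varphi(x)=\varphi(1)r+s$; one may verify it is still an orientation by writing out the Gram matrix $\left(\begin{smallmatrix}\varphi(1)&1\\1&0\end{smallmatrix}\right)$ in the basis $(1,x)$, whose determinant $-1$ is a unit. The main obstacle here is really nothing more than careful bookkeeping between $\varphi$, $e$, and $e^*$; no input beyond the two preceding statements is needed.
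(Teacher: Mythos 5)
Your proof is correct and follows essentially the same route as the paper: part (1) via $\Ima e^*=(\ker e)^\perp=\Ann(\ker e)$ from Proposition~\ref{prop:loosingorientation} and Lemma~\ref{lem:perpendiculars_to_ideals_are_intrinsic}, part (2) from the isotropy condition forcing $x\in\ker e\cap\Ann(\ker e)$, and part (3) from $\varphi(x)=B_\varphi(x,1)=e(1)=1$. Your additional verification of the final ``in particular'' clause (injectivity of $R[T]/T^2\to A$ and the Gram matrix $\left(\begin{smallmatrix}\varphi(1)&1\\1&0\end{smallmatrix}\right)$ with unit determinant) is a correct, slightly more detailed treatment of a point the paper leaves implicit.
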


\begin{proof}
	By definition, $x$ spans $\Ima e^*=(\ker e)^\perp$ (see the proof of Proposition~\ref{prop:loosingorientation}), which is $\Ann(\ker e)$ by Lemma~\ref{lem:perpendiculars_to_ideals_are_intrinsic}.
	By Proposition~\ref{prop:loosingorientation}, $\Ann(\ker e)$ is a square-zero ideal, hence $x^2=0$. Finally, $\phi(x)=B_\phi(x,1)=e(1)=1$, since $e$ is a ring homomorphism.
\end{proof}

\begin{defn}
A \emph{non-unital oriented Gorenstein} $R$-algebra is a pair $(V, B)$ where $V$ is a finite locally free non-unital (i.e., not necessariy unital) $R$-algebra and $B$ is a non-degenerate symmetric bilinear form on $V$ such that $B(xy,z)=B(x,yz)$ for all $x,y,z \in V$. We denote by $\FGor^{\nonu}(R)$ the groupoid of non-unital oriented Gorenstein $R$-algebras.
\end{defn}

\begin{rem}
If $(V,B)$ is a non-unital oriented Gorenstein $R$-algebra such that $V$ is unital, we can define an $R$-linear map $\varphi\colon V\to R$ by $\varphi(y) = B(y, 1)$. By construction, we then have $B_\varphi = B$, so $(V, \varphi)$ is an ordinary oriented Gorenstein $R$-algebra.
\end{rem}

If we view the moduli stack $\FGor^\o$ of oriented Gorenstein algebras as the hermitian counterpart of the moduli stack $\FFlat$ of finite locally free algebras, then $\FGor^{\o,+}$ is the hermitian counterpart of the moduli stack $\FFlat^\mrk$ of augmented finite locally free algebras, and $\FGor^\nonu$ that of the moduli stack $\FFlat^\nonu$ of non-unital finite locally free algebras. In the finite locally free case, the augmentation ideal defines an equivalence of presheaves of groupoids $\Aug\colon\FFlat^\mrk\to \FFlat^\nonu$, whose inverse is given by unitalization. We now investigate the hermitian analogue of this equivalence, which is slightly more complicated.

\begin{constr}\label{constr:non-unitalization}
    If $(A, \varphi, e) \in \FGor^{\o,+}(R)$ has local socle generator $x$, there is a direct sum decomposition 
	 \[
	 \ker e=Rx\oplus (R[x]/x^2)^\perp.
	 \]
	 Indeed, since $B_\phi(x,y)=e(y)$ we have $\ker e=(Rx)^\perp$, which contains the right-hand side as $x$ is isotropic. Conversely, if $y\in \ker e$, then $y-B_\phi(y,1)x$ is orthogonal to $1$ by Lemma~\ref{lem:local_socle}(3) and to $x$, hence it is orthogonal to $R[x]/x^2$.
	 In particular, $(R[x]/x^2)^\perp$ can be identified with the quotient $\ker(e)/Rx$.
	Since $Rx$ is an ideal in $A$ by Lemma~\ref{lem:local_socle}(1), $((R[x]/x^2)^\perp, B_\varphi)$ is a non-unital oriented Gorenstein $R$-algebra with multiplication given by the multiplication in $A/Rx$ (or equivalently, by the multiplication in $A$ followed by the orthogonal projection).
\end{constr}

\begin{prop}\label{prop:nonunital=augmented}
    The map
    \[\Aug\colon\FGor^{\o,+}\to \A^1\times\FGor^{\nonu}\]
    sending $(A,\varphi,e)$ to the pair $(\varphi(1),((R[x]/x^2)^\perp, B_\varphi))$ is an equivalence of presheaves of groupoids.
\end{prop}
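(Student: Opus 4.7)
The plan is to construct an explicit inverse ``unitalization'' functor $U\colon \A^1\times \FGor^\nonu\to \FGor^{\o,+}$ and verify that the two compositions are naturally isomorphic to the identity. Given $(c,(V,B))$ over a ring $R$, the algebra $U(c,(V,B))$ will be $R[x]/x^2\oplus V$ as an $R$-module with unit $1$, relations $x^2=0$, $x\cdot v=0$ for $v\in V$, and Frobenius-style multiplication
\[
v\cdot v' := B(v,v')\,x + v\cdot_V v' \qquad (v,v'\in V),
\]
where $\cdot_V$ is the given non-unital multiplication on $V$. The orientation and augmentation are set to $\varphi(r+sx+v)=cr+s$ and $e(r+sx+v)=r$. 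The formula is motivated by Example~\ref{ex:Gorenstein-algebras-from-bilinear-forms} (which handles $c=0$ and $V$ with trivial multiplication) and is forced by Construction~\ref{constr:non-unitalization}.

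The verification that $U(c,(V,B))\in\FGor^{\o,+}(R)$ splits into three routine checks. Associativity of the multiplication, after bilinear reduction, boils down to the case of three elements of $V$, where it amounts precisely to associativity of $\cdot_V$ together with the Frobenius identity $B(v_1\cdot_V v_2,v_3)=B(v_1,v_2\cdot_V v_3)$, both built into the definition of $\FGor^\nonu$. Non-degeneracy of $B_\varphi$ is clear from the block shape $\bigl(\begin{smallmatrix} c & 1 & 0\\ 1 & 0 & 0\\ 0 & 0 & B\end{smallmatrix}\bigr)$ in the basis adapted to $R\oplus Rx\oplus V$. A short calculation gives $e^\ast(1)=x$, so isotropicity follows from $x^2=0$. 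Naturality of $U$ in $R$ is manifest from the formulas.

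To identify the two compositions with identities, one uses Construction~\ref{constr:non-unitalization} and Lemma~\ref{lem:local_socle}. For $\Aug\circ U=\id$, everything is essentially definitional: the local socle generator of $U(c,(V,B))$ is $x$, $\varphi(1)=c$, and the orthogonal complement of $R[x]/x^2$ is $V$ with exactly the prescribed non-unital structure. For $U\circ\Aug=\id$, starting from $(A,\varphi,e)$ with $x=e^\ast(1)$ and $V=(R[x]/x^2)^\perp$, one must recover the full multiplication of $A$ from $\cdot_V$ and $c=\varphi(1)$. The key computation is that for $v,v'\in V$ the product $vv'\in A$ decomposes as $B_\varphi(v,v')\,x + v\cdot_V v'$: the $R$-coefficient is $B_\varphi(vv',x)=\varphi(xvv')=\varphi((xv)v')=0$, because $xv=0$ by Lemma~\ref{lem:local_socle}(1) applied to $v\in\ker e$, while the $Rx$-coefficient is $\varphi(vv')=B_\varphi(v,v')$.

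The main potential obstacle is the bookkeeping: making sure that multiplication, orientation and augmentation match on the nose (so that one genuinely obtains an equivalence of groupoids and not merely an isomorphism between abstract objects), and that everything is natural in $R$. Both are taken care of by the explicit formulas, which commute with arbitrary base change and provide canonical isomorphisms in either direction.
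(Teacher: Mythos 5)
Your proposal is correct and follows essentially the same route as the paper: you construct the same unitalization inverse (with identical multiplication, orientation and augmentation formulas) and verify both compositions by the same key computations, in particular the decomposition $vv' = B_\varphi(v,v')\,x + v\cdot_V v'$ in $A$. The only difference is cosmetic — you fill in the associativity and non-degeneracy checks that the paper labels as straightforward, and you establish that the $1$-component of $vv'$ vanishes via $\varphi(xvv')=0$ rather than via $vv'\in\ker e$, but both rest on the same fact that $x$ annihilates $\ker e$.
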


\begin{proof}
    We construct the inverse map $\Uni\colon \A^1\times\FGor^{\nonu} \to \FGor^{\o,+}$ by associating to $\lambda\in R$ and $(V, B) \in \FGor^{\nonu}(R)$ the $R$-module $R[x]/x^2\oplus V$ with the multiplication
    \begin{equation}\label{eq:multiplication-in-augmented-algebras}
        (r+sx,v)(r'+s'x,v')=(rr'+(sr'+s'r+B(v,v'))x, r'v+rv'+vv')\,,
    \end{equation}
    and augmentation and orientation given by
    \[e(r+sx,v)=r,\qquad \varphi(r+sx,v)=r\lambda+s\,.\] 
	 It is straightforward to check that this multiplication is associative. Clearly, this assignment is functorial, and $\Aug \circ \Uni \simeq \id$, so it remains to show that $\Uni \circ \Aug\simeq\id$. 
    
    Let $(A,\varphi,e) \in \FGor^{\o,+}(R)$ with the local socle generator $x$,  and let $(V, B)$ be the orthogonal complement of $R[x]/x^2$ in $A$ equipped with the non-unital Gorenstein algebra structure of Construction~\ref{constr:non-unitalization}. Then there is a canonical orthogonal decomposition $A\simeq R[x]/x^2\oplus V.$
    Since the kernel of the augmentation is exactly $(R\cdot x)^\perp$ (by definition of $x$), the augmentation is given by projecting onto the first summand and setting $x=0$. Moreover, $V$ is contained in the orthogonal complement of $R\cdot 1$ and so
    \[\varphi(r+sx,v)=\varphi(r+sx,0)=r\lambda+s\,.\]
    Finally, we need to check that the multiplication is given by \eqref{eq:multiplication-in-augmented-algebras}. Since $x$ annihilates $V$, the only question is what is the product of two elements of the form $(0,v)$ and $(0,v')$. The second component is, by definition, the product in $V$. Since the product still needs to be in $\ker e$, it follows that
    \[(0,v)(0,v')=(bx,vv')\]
    for some $b\in R$. But then
    \[b=\varphi(bx,vv')=\varphi\left((0,v)(0,v')\right)=B_\varphi\left((0,v),(0,v')\right)=B(v,v')\]
    as required.
\end{proof}

The following table summarizes the various presheaves of groupoids of finite flat algebras considered in this paper:
\begin{center}
  \begin{tabular}{l l}
  name & description\\
    \toprule
        $\FFlat$ & finite locally free algebras\\
        $\FFlat^\mrk$ & augmented finite locally free algebras\\
		$\FFlat^\nonu$ & non-unital finite locally free algebras\\
         $\FGor$ & (finite) Gorenstein algebras\\
         $\FGor^\o$ & oriented Gorenstein algebras\\
        $\FGor^{\o,+}$ & isotropically augmented oriented Gorenstein algebras\\
    $\FGor^\nonu$ & non-unital oriented Gorenstein algebras\\
$\FGor^{\o,0}$ & isotropic oriented Gorenstein algebras (see Section~\ref{sec:isotropic})
\end{tabular}
\end{center}

All of these groupoids of $R$-algebras extend by descent to groupoids of quasi-coherent $\Oo_S$-algebras for an arbitrary scheme $S$. 
For example, $\FGor^\o(S)$ is the groupoid of finite locally free quasi-coherent $\sO_S$-algebras $\mathcal{A}$ equipped with an $\Oo_S$-linear map $\varphi\colon \mathcal{A}\to \Oo_S$ such that the induced bilinear form $B_\varphi\colon \sA\times\sA\to \sO_S$ is non-degenerate.
We will also identify quasi-coherent $\Oo_S$-algebras with $S$-schemes that are affine over $S$ (in particular in the discussion of Hilbert schemes). Thus, $\FGor^\o(S)$ can be identified with the groupoid of finite locally free $S$-schemes $p\colon Z\to S$ together with a suitable map $\varphi\colon p_*\Oo_Z\to \Oo_S$. In this situation, we will often abuse notation and regard $\sO_Z$ as a quasi-coherent $\sO_S$-algebra.

In order to construct some $\A^1$-homotopies in the proof of our main result, we will need a way to glue together objects of $\FGor^{\o,+}(S)$ along the basepoint. 
Given $(Z_1, \varphi_1, e_1), (Z_2, \varphi_2, e_2) \in \FGor^{\o,+}(S)$, 
the disjoint union $Z_1\sqcup Z_2$ inherits an orientation 
\[
\phi\colon \sO_{Z_1\sqcup Z_2}=\sO_{Z_1}\times \sO_{Z_2}\to \sO_S,\quad \phi(a_1,a_2)=\phi_1(a_1)+\phi_2(a_2),
\]
which makes $\sO_{Z_1\sqcup Z_2}$ into the orthogonal sum of $\sO_{Z_1}$ and $\sO_{Z_2}$.
Gluing $Z_1$ and $Z_2$ along the basepoint means passing to the subalgebra $\sO_{Z_1\sqcup_SZ_2}=\sO_{Z_1}\times_{\sO_S} \sO_{Z_2}\subset \sO_{Z_1}\times \sO_{Z_2}$. However, the restriction of $\phi$ to $\sO_{Z_1\sqcup_SZ_2}$ is no longer an orientation:
if $x_i$ is the local socle generator of $(Z_i,\varphi_i,e_i)$, then $(x_1,-x_2)$ belongs to the radical of $B_\phi$ on $\sO_{Z_1\sqcup_SZ_2}$. It turns out that this is the only obstruction and that we obtain a well-defined orientation on the vanishing locus of $(x_1,-x_2)$:

\begin{prop}\label{prop:connectedSum}
	Let $(Z_1, \varphi_1, e_1), (Z_2, \varphi_2, e_2) \in \FGor^{\o,+}(S)$ and let $x_i\in\sO(Z_i)$ be the corresponding local socle generators.
	Let $Z_{12} \subset Z_1\sqcup_{S} Z_2$ be the closed subscheme given by the equation $(x_1,-x_2)$, let $e=e_1=e_2\colon S\to Z_{12}$, and let $\phi=\phi_1+\phi_2\colon \sO_{Z_{12}}\subset \sO_{Z_1}\oplus\sO_{Z_2}\to \sO_S$.
	Then $Z_{12}$ is a finite Gorenstein $S$-scheme of degree $\deg(Z_1) + \deg(Z_2) - 2$, with orientation $\phi$ and isotropic augmentation $e$.
\end{prop}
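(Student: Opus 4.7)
The plan is to work affine-locally on $S$, writing $R=\sO_S$ and $A_i=\sO(Z_i)$, and to analyze the quotient ring $F/L$ where $F=A_1\times_RA_2$ (the structure sheaf of $Z_1\sqcup_SZ_2$) and $L=R\cdot u$ with $u=(x_1,-x_2)\in F$. The goal is to show that $\sO_{Z_{12}}:=F/L$ has all the claimed properties, obtained by restriction and descent from the orthogonal-sum structure on $A_1\oplus A_2$ with orientation $\phi=\phi_1+\phi_2$.

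First I would record the preliminary facts. The fiber product $F$ is the kernel of the surjection $(e_1,-e_2)\colon A_1\oplus A_2\to R$, so it is finite locally free of rank $\deg(Z_1)+\deg(Z_2)-1$. By Lemma~\ref{lem:local_socle}(1) we have $a_ix_i=e_i(a_i)x_i$ for every $a_i\in A_i$, which immediately yields $(a_1,a_2)\cdot u=e_1(a_1)\cdot u$ for $(a_1,a_2)\in F$, so $L$ is an ideal of $F$. The same identity combined with Lemma~\ref{lem:local_socle}(3), $\phi_i(x_i)=1$, yields
\[
B_\phi((a_1,a_2),u)=\phi_1(a_1x_1)-\phi_2(a_2x_2)=e_1(a_1)-e_2(a_2),
\]
so $L^\perp=F$ inside $A_1\oplus A_2$; moreover $L$ is isotropic by Lemma~\ref{lem:local_socle}(2).

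Next I would perform the orthogonal reduction. Since $B_\phi$ is non-degenerate on $A_1\oplus A_2$ and $L$ is a rank-one free direct summand, we have $F^\perp=L^{\perp\perp}=L$, so the radical of $B_\phi|_F$ is precisely $L$. Using the decomposition $A_i=R\cdot 1\oplus R\cdot x_i\oplus V_i$ from Construction~\ref{constr:non-unitalization}, $F$ admits a basis of the form $(1,1),\ (x_1,0),\ (0,x_2)$, together with $V_1,V_2$; replacing the generator $(0,x_2)$ with $u$ exhibits $L$ as a direct summand of $F$ with complement of rank $\deg(Z_1)+\deg(Z_2)-2$. Hence $\sO_{Z_{12}}=F/L$ is a finite locally free $R$-algebra of the stated degree. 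By Lemma~\ref{lem:local_socle}(3), $\phi(u)=\phi_1(x_1)-\phi_2(x_2)=0$, so $\phi$ descends to an $R$-linear map $\sO_{Z_{12}}\to R$ whose associated bilinear form is non-degenerate (we have quotiented out exactly the radical), giving the orientation. Similarly, the common augmentation $e=e_1=e_2$ on $F$ satisfies $e(u)=e_1(x_1)=0$ by isotropy, so descends to an augmentation of $\sO_{Z_{12}}$.

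Finally I would verify that $e$ is isotropic. From the computation above, for the class $\bar y\in\sO_{Z_{12}}$ of $(x_1,0)\in F$ (equivalently of $(0,x_2)$, as they differ by $u$), one has $B_\phi(\bar y,\overline{(a_1,a_2)})=\phi_1(a_1x_1)=e_1(a_1)=e(\overline{(a_1,a_2)})$, so $\bar y=e^*(1)$; then $B_\phi(\bar y,\bar y)=\phi_1(x_1^2)=0$ by Lemma~\ref{lem:local_socle}(2). All constructions are functorial and compatible with base change on $S$, so they globalize to the required scheme-theoretic statement. The step I expect to require the most care is the verification that $L$ is a direct summand of $F$ (and hence that $F/L$ is locally free of the claimed rank), which is handled by the explicit basis coming from Construction~\ref{constr:non-unitalization}.
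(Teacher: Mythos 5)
Your argument is essentially correct and runs close to the paper's, with one step that is under-justified as written. You identify the radical of $B_\phi$ on $F=A_1\times_RA_2$ abstractly, via $F=L^\perp$ and $F^\perp=L^{\perp\perp}=L$ for the rank-one direct summand $L=R\cdot(x_1,-x_2)$ inside the unimodular form on $A_1\oplus A_2$; the paper instead computes the radical explicitly from the decomposition $A_1\times_RA_2\simeq R[x_1,x_2]/(x_1,x_2)^2\oplus V_1\oplus V_2$, where the Gram matrix on the first summand is $\left(\begin{smallmatrix}b & 1\\ 1 & 0\end{smallmatrix}\right)\oplus 0$ in the basis $(1,x_1,x_1-x_2)$. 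Both identifications of the radical are fine (your double-perp step does need the small standard argument that $N^{\perp\perp}=N$ for a direct summand of a unimodular form, which you assert without proof), and your verification of flatness, degree, and isotropy of $e$ matches the paper's.

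The soft spot is the parenthetical ``non-degenerate (we have quotiented out exactly the radical).'' In this paper non-degenerate means that $B$ induces an isomorphism with the dual, and killing the radical does not in general produce such a form over a base ring: the form $2xy$ on $\Z$ has zero radical but is not non-degenerate in this sense. So ``radical of $B_\phi|_F$ equals $L$'' does not by itself yield that the induced form on $F/L$ is an orientation. The statement is nevertheless true in your setting, and you have two ways to close the gap with material already on the table: either compute the Gram matrix of the induced form on $F/L$ in the decomposition $R\cdot\overline{(1,1)}\oplus R\cdot\overline{(x_1,0)}\oplus V_1\oplus V_2$ that you wrote down (the first block is $\left(\begin{smallmatrix}b & 1\\ 1 & 0\end{smallmatrix}\right)$, it is orthogonal to $V_1\oplus V_2$, and $B_\phi$ is unimodular on $V_1\oplus V_2$) --- this is exactly the paper's route; or invoke sublagrangian reduction: since $L$ is an isotropic free direct summand of the unimodular module $A_1\oplus A_2$ and $F=L^\perp$, the induced form on $L^\perp/L$ is unimodular (one checks that $B_\phi$ identifies $F$ with $(M/L)^\vee$ and carries $L=F^\perp$ onto $(M/F)^\vee$, giving $F/L\simeq (F/L)^\vee$); this is the same algebraic surgery the paper uses later for $\FGor^{\o,0}$. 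With either repair the proof is complete.
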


\begin{proof}
    First we need to prove that $Z_{12}$ is finite locally free over $S$. Since this property is local on the base we can assume $S=\Spec(R)$ is affine. Then we can write $Z_i=\Spec(A_i)$ and $Z_{12}=\Spec (A_1\times_R A_2)/(x_1,-x_2)$. We know that $A_1\times_R A_2$ is finite locally free over $R$ by \cite[Lemma~3.6]{robbery}. The inclusion of the ideal spanned by $x_i$ is a split inclusion $R\to A_i$, with the splitting given by $\varphi_i$, so the inclusion of the ideal spanned by $(x_1,-x_2)$ is also a split inclusion $R\to A_1\times_R A_2$ (split, say, by $\varphi_1\circ\mathrm{pr}_1$). In particular, the quotient $(A_1\times_R A_2)/(x_1,-x_2)$ is also finite locally free, and of the desired degree.
    
	 To show that $\phi$ is an orientation, we can again work in the affine case. We have to show that the radical of the form $B_\phi$ on $A_1\times_RA_2$ is precisely the ideal $R(x_1,-x_2)$. 
	 On the one hand,
	 \[
	 B_\phi((x_1,-x_2),(y_1,y_2)) = e_1(y_1)-e_2(y_2) = 0
	 \]
	 for all $(y_1,y_2)\in A_1\times_RA_2$, so $(x_1,-x_2)$ is in the radical. By Lemma~\ref{lem:local_socle}, we can write $A_i=R[x_i]/x_i^2\oplus V_i$ with $V_i=(R[x_i]/x_i^2)^\perp$, whence
	 \[
	 A_1\times_RA_2 \simeq R[x_1,x_2]/(x_1,x_2)^2 \oplus V_1\oplus V_2.
	 \]
	 The restriction of $B_\phi$ to $R[x_1,x_2]/(x_1,x_2)^2$ is given by a matrix $\left(\begin{smallmatrix}b & 1 \\ 1 & 0\end{smallmatrix}\right)\oplus 0$ in the basis $(1,x_1,x_1-x_2)$, hence has radical $R(x_1-x_2)$. The latter contains the radical of $B_\phi$ since the restriction of $B_\phi$ to $V_1\oplus V_2$ is non-degenerate.
	 Finally, the augmentation $e$ is isotropic since $e^*(1)=(x_1,0)$ and hence $e(e^*(1))=e_2(0)=0$.
\end{proof}

\begin{defn}\label{def:connected sum}
	In the setting of Proposition~\ref{prop:connectedSum}, $(Z_{12},\phi,e)\in\FGor^{\o,+}(S)$ is called the \emph{connected sum} of $(Z_1,\phi_1,e_1)$ and $(Z_2,\phi_2,e_2)$.
\end{defn}

\begin{rem}
	In the affine case, Definition~\ref{def:connected sum} is a special case of the more general notion of connected sum of rings studied in \cite[Section 2]{Ananthnarayan_Connected_sums}. In particular, the fact that $Z_{12}$ is Gorenstein is also a consequence of \cite[Theorem~2.8]{Ananthnarayan_Connected_sums}.
\end{rem}

\begin{rem}
It is easy to show that the connected sum gives a commutative monoid structure on the stack $\FGor^{\o,+}$. Moreover, if $\FFlat^\mrk$ denotes the moduli stack of pointed finite locally free schemes with the commutative monoid structure given by the wedge sum, the morphism $\Hyp\colon \FFlat^\mrk\to \FGor^{\o,+}$ from Remark~\ref{rem:functor_hyp} is a morphism of commutative monoids. We shall not need these facts in the sequel.
\end{rem}

\section{$\A^1$-equivalence between the group completions of the stacks $\FGor^\ori$ and $\Vect^\sym$}
\label{sec:gp}

We denote by $\Vect^\sym$ the stack of finite locally free modules equipped with a non-degenerate symmetric bilinear form.
The direct sum and tensor product define an $\Einfty$-semiring structure on $\Vect^\sym$. Similarly, the disjoint union and cartesian product of schemes define an $\Einfty$-semiring structure on $ \FGor^\o$, and the forgetful map $\eta\colon\FGor^\o \to \Vect^\sym$, sending $(A,\varphi)$ to $(A,B_\varphi)$, is a morphism of $\Einfty$-semirings. We will describe these constructions more explicitly in Section~\ref{sec:kq}. The main result of this section is the following theorem.

\begin{thm}\label{thm:main-gp}
	The map $\eta^\gp\colon \FGor^{\o,\gp} \to \Vect^{\sym,\gp}$ is an $\A^1$-equivalence of presheaves on the category of schemes, where $\gp$ stands for objectwise group completion.
\end{thm}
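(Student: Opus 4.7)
My plan is to exhibit the $\A^1$-homotopy inverse to $\eta^\gp$ described in the introduction, namely the map $[V,B]\mapsto [\bar\alpha(V,B)]-[R[x]/x^2]$, where $\bar\alpha\colon \Vect^\sym\to \FGor^\o$ is the composition of the map of Example~\ref{ex:Gorenstein-algebras-from-bilinear-forms} (which lands in $\FGor^{\o,+}$) with the forgetful map to $\FGor^\o$.

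One composition is immediate: since the bilinear form $B_\varphi$ on $\bar\alpha(V,B)=R[x]/x^2\oplus V$ is by construction the orthogonal sum of the hyperbolic form on $R[x]/x^2$ and $B$, the composition $\eta^\gp\circ\alpha^\gp$ satisfies $[V,B]\mapsto [\mathrm{hyp}\oplus B]-[\mathrm{hyp}]=[B]$ without any $\A^1$-homotopy.

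The heart of the proof is the other composition $\alpha^\gp\circ\eta^\gp$: for an arbitrary oriented Gorenstein algebra $(A,\varphi)$, I must produce an $\A^1$-homotopy in $\FGor^\o$ from $\bar\alpha(A,B_\varphi)$ to $(A,\varphi)\sqcup(R[x]/x^2,\varphi_0)$, so that subtraction of $[R[x]/x^2]$ in $\FGor^{\o,\gp}$ returns $[A,\varphi]$. The key is to use the equivalence $\Aug\colon \FGor^{\o,+}\isoto \A^1\times\FGor^{\nonu}$ of Proposition~\ref{prop:nonunital=augmented} to transport the problem to $\FGor^{\nonu}$, where the scaling $\mu\mapsto t\mu$ provides an obvious $\A^1$-deformation to the zero multiplication. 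Concretely, introduce the map $\sigma\colon \FGor^\o\to \FGor^{\o,+}$ sending $(A,\varphi)$ to $(A\sqcup R[x]/x^2,\varphi\sqcup\varphi_0,e_0)$, augmented on the $R[x]/x^2$-factor (isotropic by Example~\ref{ex:dualnumbers}), and compute $\Aug(\sigma(A,\varphi))\simeq(\varphi(1),A,B_\varphi,\mu_A)$, identifying the orthogonal complement of the canonical copy of $R[x]/x^2$ with $A$ together with its intrinsic multiplication $\mu_A$ viewed as a non-unital multiplication. By contrast, $\Aug(\alpha(A,B_\varphi))=(0,A,B_\varphi,0)$ tautologically. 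The two points are joined by the $\A^1$-family $t\mapsto(t\varphi(1),A,B_\varphi,t\mu_A)$, with associativity and compatibility with $B_\varphi$ for $t\mu_A$ being immediate by $\Z$-linearity. Transporting through $\Uni=\Aug^{-1}$ yields the required $\A^1$-homotopy in $\FGor^{\o,+}$, which descends to $\FGor^\o$ after forgetting the augmentation.

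The main obstacle, and the only step requiring real calculation, is verifying the identification $\Aug(\sigma(A,\varphi))\simeq(\varphi(1),A,B_\varphi,\mu_A)$: one must compute the canonical copy of $R[x]/x^2$ inside $A\sqcup R[x]/x^2$ (spanned by $1$ and the local socle generator $(0,x)$), parameterize its orthogonal complement explicitly (as $\{(a,-\varphi(a)x)\mid a\in A\}\cong A$), and check that the induced non-unital multiplication, obtained by multiplying in $A\sqcup R[x]/x^2$ and then projecting orthogonally onto this complement, coincides with $\mu_A$ under this parameterization. Once this identification is established, the rest of the argument is formal.
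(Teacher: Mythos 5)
Your central computation is correct, and it takes a genuinely different route from the paper. Under the equivalence $\Aug\colon \FGor^{\o,+}\isoto \A^1\times\FGor^{\nonu}$ of \propref{prop:nonunital=augmented}, you identify $(A\times R[x]/x^2,\varphi\oplus\varphi_0)$ with its augmentation through the double point (the paper's $\gamma(A,\varphi)$) with $(\varphi(1),(A,B_\varphi,\mu_A))$, and the algebra of Example~\ref{ex:Gorenstein-algebras-from-bilinear-forms} applied to $(A,B_\varphi)$ with $(0,(A,B_\varphi,0))$; the scaling family $t\mapsto(t\varphi(1),(A,B_\varphi,t\mu_A))$ then yields a natural $\A^1$-homotopy $\bar\alpha\circ\eta\simeq\sigma$ in $\FGor^\o$, where $\sigma$ adds a disjoint double point (in rank $1$ your family is $k[v]/(v^2(v-t))$, a collision of the double point into $\Spec A$, so it is a relative of \constrref{constr:bankrobbery}). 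The paper's difficulty was comparing $\sigma^+$ with $\gamma\circ\theta$ as maps \emph{out of the augmented stack}, where the two augmentations clash --- hence the bank robbery and \propref{prop:bank-robbery}. Your homotopy has source $\FGor^\o$, so that clash never arises, and it would let one prove the analogue of \thmref{thm:main-st} by exhibiting $\bar\alpha$ directly as a stable $\A^1$-inverse of $\eta$ (using also $\eta\circ\bar\alpha\cong\tau$ and $\sigma\circ\bar\alpha\simeq\bar\alpha\circ\eta\circ\bar\alpha\cong\bar\alpha\circ\tau$ to make $\bar\alpha$ compatible with the stabilizations), bypassing \constrref{constr:bankrobbery} entirely. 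This is a real simplification of that part of the argument.

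However, your final step --- ``the rest of the argument is formal'' --- conceals a genuine gap: the passage from these pre-group-completion homotopies to the statement about $\eta^\gp$. Your candidate inverse $[V,B]\mapsto[\bar\alpha(V,B)]-[R[x]/x^2]$ is only a map of presheaves $\Vect^\sym\to\FGor^{\o,\gp}$; it is not additive before $\A^1$-localization (already on $\pi_0$ over a field, $\bar\alpha(V\oplus V')\sqcup\Spec R[x]/x^2$ and $\bar\alpha(V)\sqcup\bar\alpha(V')$ have non-isomorphic decompositions into local algebras, and the monoid of isomorphism classes is cancellative there), so it does not extend over $\Vect^{\sym,\gp}$, whose universal property is among $\Einfty$-maps; consequently ``$\alpha^\gp$'' does not exist and a two-sided-inverse argument cannot be run directly on group completions. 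What your homotopies actually give is $\eta\circ\bar\alpha\cong\tau$ and $\bar\alpha\circ\eta\simeq_{\A^1}\sigma$, i.e.\ inverse equivalences $\Lhtp\FGor^{\o,\st}\simeq\Lhtp\Vect^{\sym,\st}$ of telescopes; deducing the theorem from this still requires the second half of the paper's proof: the comparison of the telescopes (or of the localizations $\FGor^\o[-\Z[x]/x^2]$ and $\Vect^\sym[-\Hyp]$) with the group completions, which uses \lemref{lem:metabolic=hyperbolic}, \lemref{lem:invert-hyp} (the cyclic permutation of $\Hyp^{\oplus 3}$ being $\A^1$-homotopic to the identity, fed into \cite[Proposition 5.1]{deloop4}) and the fact that $\Lhtp$ commutes with group completion \cite[Lemma 5.5]{HoyoisCdh}; this is also where the statement over all schemes, not just affine ones, is secured. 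So your construction can replace the bank-robbery step, but the group-completion step is missing from your proposal and is not formal.
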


\begin{rem}\label{rem:functor_hyp}
Theorem~\ref{thm:main-gp} is analogous to~\cite[Theorem~2.1]{robbery}, which
states that the forgetful map from the stack of finite locally free schemes
$\FFlat$ to the stack of vector bundles $\Vect$ becomes an $\A^1$-equivalence
after group completion. The connection can be expressed precisely in the following way. 
There is a commutative diagram of hyperbolic and forgetful functors
\[\begin{tikzcd}
     \FFlat\ar[d,swap,"\Hyp"] \ar[r, "\eta"] & \Vect\ar[d,"\Hyp"]\\
    \FGor^{\ori}\ar[r, "\eta"] \ar[d,swap,"\mathrm{forget}"] & \Vect^{\sym} \ar[d,"\mathrm{forget}"] \\
	 \FFlat \ar[r, "\eta"] & \Vect\rlap.
\end{tikzcd}\]
Here, the functor $\Hyp\colon \Vect\to\Vect^\sym$ sends $V$ to $\left(V\oplus V^\vee, \left(\begin{smallmatrix}
    0 & I\\ I & 0\end{smallmatrix}\right)\right)$, and the functor $\Hyp\colon \FFlat\to \FGor^\o$ sends a finite locally free $R$-algebra $A$ to the square-zero extension $A\oplus\omega_{A/R}$ with the orientation $\varphi(a,f)=f(a)$.
\end{rem}

To proceed, we define the following stabilization maps:
\begin{align*}
\tau \colon \Vect^\sym \to  \Vect^\sym, &\quad (V,B) \mapsto (V\oplus R^2, B \oplus B_{\Hyp}), \\
\sigma \colon  \FGor^{\o} \to  \FGor^{\o}, &\quad (A,\varphi) \mapsto  (A\oplus R[x]/x^2, \varphi \oplus \varphi_0), \\
\sigma^+ \colon  \FGor^{\o,+} \to  \FGor^{\o,+}, &\quad (A,\varphi,e) \mapsto  (A\oplus R[x]/x^2, \varphi \oplus \varphi_0, e\circ \pr_1),
\end{align*}
where $B_{\Hyp} = \left(\begin{smallmatrix} 0 & 1\\ 1& 0\end{smallmatrix}\right)$ is the hyperbolic form and $\phi_0$ is the orientation from Example~\ref{ex:kx/x2}. 
We denote by $\Vect^{\sym,\st}$ the colimit of the sequence
\[
\Vect^\sym\xrightarrow{\tau} \Vect^\sym \xrightarrow{\tau }\Vect^\sym\to\dotsb,
\]
and we define $\FGor^{\o,\st}$ and $\FGor^{\o,+,\st}$ similarly.

By Example~\ref{ex:kx/x2}, the square
\[
\begin{tikzcd}
	\FGor^{\o} \ar{r}{\eta} \ar{d}[swap]{\sigma} & \Vect^{\sym} \ar{d}{\tau} \\
	\FGor^{\o} \ar{r}{\eta} & \Vect^{\sym}
\end{tikzcd}
\]
commutes, inducing a map $\eta^\st\colon \FGor^{\o,\st} \to \Vect^{\sym,\st}$ in the colimit. Similarly, the map  $\theta\colon \FGor^{\ori,+}\to \FGor^{\ori}$ that forgets the augmentation stabilizes to a map $\theta^\st\colon \FGor^{\ori,+,\st}\to \FGor^{\ori,\st}$.

Note that there are canonical maps $\Vect^{\sym,\st}\to\Vect^{\sym,\gp}$ and $\FGor^{\o,\st}\to\FGor^{\o,\gp}$ from the telescopes to the group completions, induced by mapping the $n$th copy of $\Vect^\sym(R)$ resp.\ of $\FGor^{\o}(R)$ to the group completion via $(V,b)\mapsto (V,b)-n\cdot (R^2,B_{\Hyp})$ resp.\ $(A,\phi)\mapsto (A,\phi)-n\cdot (R[x]/x^2,\phi_0)$.
We shall deduce Theorem~\ref{thm:main-gp} from the following variant, which does not involve group completion:

\begin{thm}\label{thm:main-st}
	The map $\eta^\st\colon \FGor^{\o,\st}\to \Vect^{\sym,\st}$ is an $\A^1$-equivalence.
\end{thm}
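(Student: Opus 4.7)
I will construct an explicit $\A^1$-inverse $\tilde\beta^\st$ to $\eta^\st$ using Example~\ref{ex:Gorenstein-algebras-from-bilinear-forms}. Define $\tilde\beta\colon \Vect^\sym \to \FGor^\o$ by $\tilde\beta(V, B) = (R[x]/x^2 \oplus V, \phi')$, where the algebra structure and orientation $\phi'(r+sx, v) = s$ are as in that example (forgetting the isotropic augmentation). Then $\eta \circ \tilde\beta = \tau$ on the nose. After verifying an $\A^1$-homotopy $\sigma \circ \tilde\beta \simeq \tilde\beta \circ \tau$ (identifying a ``detached'' versus ``integrated'' copy of the hyperbolic plane in the output, by an explicit homotopy of the same flavor as the main one below), $\tilde\beta$ descends to $\tilde\beta^\st\colon \Vect^{\sym, \st} \to \FGor^{\o, \st}$ satisfying $\eta^\st \circ \tilde\beta^\st \simeq \id$.

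The main step is an $\A^1$-homotopy $\tilde\beta \circ \eta \simeq \sigma$ as endomorphisms of $\FGor^\o$. Given $(A, \phi) \in \FGor^\o(R)$, both sides have underlying $R$-module $R[x]/x^2 \oplus A$ but differ in algebra structure (zero vs.\ original multiplication on $A$) and in orientation. I interpolate with the two-parameter family of oriented Gorenstein algebras over $\Spec R[t, v]$ given by
\[
(r+sx, a) \cdot_t (r'+s'x, a') = \bigl(rr' + (rs' + r's + B_\phi(a, a'))x,\; r'a + ra' + t\, aa'\bigr), \qquad \phi_v(r+sx, a) = s + vr.
\]
Associativity for every $t$ reduces to the identity $B_\phi(a, bc) = B_\phi(ab, c)$, which holds because $\phi$ is an orientation. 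A direct Gram matrix computation in the basis $(1, x, a_i)$ gives $\left(\begin{smallmatrix} v & 1 & 0 \\ 1 & 0 & 0 \\ 0 & 0 & B_\phi \end{smallmatrix}\right)$, with determinant $-\det(B_\phi)$ independent of $(t, v)$. Thus the family defines a morphism $\A^2 \times \FGor^\o \to \FGor^\o$, valid in arbitrary characteristic.

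At $(t, v) = (0, 0)$ the family is $\tilde\beta \circ \eta$; at $(t, v) = (1, \phi(1))$, the explicit map
\[
\Phi(b, r+sx) = \bigl(r + (s + \phi(b) - r\phi(1))x,\; b - r\cdot 1_A\bigr)
\]
is (by direct verification) an isomorphism of oriented Gorenstein algebras from $\sigma(A, \phi)$. Pulling the family back along the $\A^1$-arc $\tau \mapsto (\tau, \tau\phi(1))$, which is natural in $(A, \phi)$ since $\phi(1)$ is, yields the $\A^1$-homotopy $\tilde\beta \circ \eta \simeq \sigma$. Stabilizing gives $\tilde\beta^\st \circ \eta^\st \simeq \sigma^\st = \id$, and combined with the previous step, $\eta^\st$ and $\tilde\beta^\st$ are mutually $\A^1$-inverse.

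The main obstacle I anticipate lies less in the geometric content of these homotopies---which is dictated by the Frobenius structure and can be read off the form of $\Phi$---than in the coherent assembly of the data at the $\infty$-categorical level, in particular the compatibility of $\tilde\beta$ with the stabilization maps needed for $\tilde\beta^\st$ to exist as a map of telescopes. This bookkeeping is routine but requires writing out additional explicit $\A^1$-homotopies of the same kind.
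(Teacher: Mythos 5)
Your construction is correct in its geometric content and takes a genuinely different, more direct route than the paper. Where the paper introduces the auxiliary stack $\FGor^{\o,+}$ of isotropically augmented algebras, proves that $\theta^\st$ is an $\A^1$-equivalence via the ``bank robbery'' homotopy of Construction~\ref{constr:bankrobbery}, and then factors $\eta\circ\theta$ through $\A^1\times\FGor^\nonu$ using Propositions~\ref{prop:nonunital=augmented} and~\ref{prop:vect-nu}, you construct the $\A^1$-inverse to $\eta^\st$ directly. The key point is your two-parameter family on $R[x]/x^2\oplus A$ that simultaneously rescales the ring multiplication on $A$ by $t$ and the value $\phi(1)$ of the orientation by $v$; I checked the Gram matrix $\left(\begin{smallmatrix} v & 1 & 0 \\ 1 & 0 & 0 \\ 0 & 0 & B_\phi \end{smallmatrix}\right)$, the associativity of $\cdot_t$ (reducing to $\phi(abc)=\phi(abc)$), and the isomorphism $\Phi$ at $(t,v)=(1,\phi(1))$, and they are all correct. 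Your $\tilde\beta$ is precisely the map $\alpha$ of Proposition~\ref{prop: square zero ext over R}, which the paper shows is \emph{not} an unstable $\A^1$-equivalence; your homotopy $\tilde\beta\circ\eta\simeq_{\A^1}\sigma$ is exactly what is needed to promote it to a stable inverse. In effect you fuse the $t$-rescaling of Proposition~\ref{prop:vect-nu} with the orientation bookkeeping of Proposition~\ref{prop:nonunital=augmented} into a single formula, and you sidestep the augmentation obstruction $\gamma\circ\theta\not\simeq\sigma^+$ that motivates the bank robbery.

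However, there is an error in your final step: you assert $\sigma^\st=\id$ and deduce from $\eta\circ\tilde\beta=\tau$ that $\eta^\st\circ\tilde\beta^\st\simeq\id$. Neither claim is correct as written. The maps $\tau^\st$ and $\sigma^\st$ (colimits of the self-natural-transformations of the telescope diagrams given by $\tau$, resp.\ $\sigma$, at each stage) are not the identity, and are not even plain equivalences of presheaves — the paper explicitly notes ``$\tau^\st$ is \emph{not} an equivalence'' for exactly this reason. What your homotopies establish is $\eta^\st\circ\tilde\beta^\st\simeq\tau^\st$ and $\tilde\beta^\st\circ\eta^\st\simeq\sigma^\st$. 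To conclude that $\eta^\st$ is an $\A^1$-equivalence you still need to know that $\tau^\st$ and $\sigma^\st$ are $\A^1$-equivalences, and that is precisely the role of Lemma~\ref{lem:invert-hyp}(1) together with \cite[Proposition 5.1]{deloop4} (the cyclic permutation of $\Hyp^{\oplus 3}$ being $\A^1$-homotopic to the identity), which the paper invokes at this exact point. So your two-parameter family does genuinely simplify the geometric core of the proof, but the appeal to the cyclic-permutation lemma cannot be dropped, and the ``routine bookkeeping'' you flag at the end conceals the one nontrivial remaining input.
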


The strategy of the proof of Theorem~\ref{thm:main-st} is to use the stack $\FGor^{\o,+}$ and show that both $\theta^\st$ and $\eta^\st\circ \theta^\st$ are $\A^1$-equivalences. 
For $\theta^\st$, the idea is to construct an inverse by stabilizing the map 
\[
\gamma\colon\FGor^{\ori}\to \FGor^{\ori,+},\quad (A,\varphi)\mapsto (A\oplus R[x]/x^2, \varphi \oplus \varphi_0, e_0\circ \pr_2).
\]
However, this does not quite work: we have $\theta \circ \gamma \simeq \sigma$, but $\gamma \circ \theta \not\simeq \sigma^+$, since the maps $\gamma \circ \theta$ and $\sigma^+$ equip Gorenstein algebras with different augmentations. Construction~\ref{constr:bankrobbery} allows us to get around this obstruction; it is the main technical ingredient in the proof of Theorem~\ref{thm:main-st}.
 
 For the convenience of the reader, the following table summarizes the various maps we will use in the proof of Theorem~\ref{thm:main-st}:

\begin{center}
  \begin{tabular}{l l}
  name & description\\
    \toprule
         $\eta\colon \FGor^\o \to \Vect^\sym$ & forgets the algebra structure\\
         $\theta\colon \FGor^{\o,+} \to \FGor^\o$ & forgets the isotropic augmentation\\
        $\tau\colon\Vect^\sym \to \Vect^\sym$ & adds a copy of the hyperbolic form\\
    $\sigma\colon\FGor^\o \to \FGor^\o$ & adds a copy of the double point\\
$\sigma^+\colon \FGor^{\o,+}\to \FGor^{\o,+}$ & adds a copy of the double point without changing the augmentation\\
$\gamma\colon\FGor^\o \to \FGor^{\o,+}$ & adds a copy of the double point with its augmentation\\
$\varepsilon\colon\FGor^{\o,+} \to \FGor^{\o,+}$ & takes the connected sum with $R[x]/x^4$ \\
$\pi\colon\FGor^\nonu\to \Vect^\sym$ & forgets the algebra structure
\end{tabular}
\end{center}
\vskip\parskip
    
\begin{constr}
	\label{constr:bankrobbery}
    There is a zigzag of $\A^1$-homotopies 
	 \[
	 \sigma^+ \stackrel{H^\const}\leftsquigarrow \varepsilon \stackrel{H^\mv}\rightsquigarrow \gamma \circ \theta
	 \]
	 and an isomorphism $\psi\colon \theta\circ H^\const\simeq\theta\circ H^\mv$ such that $\psi_0=\id_{\theta\circ\varepsilon}$ and $\psi_1$ is the canonical isomorphism $\theta\circ \sigma^+\simeq \sigma\circ\theta \simeq \theta\circ \gamma\circ \theta$.
\end{constr}

\begin{proof}
    Consider the oriented Gorenstein $\Z[t]$-algebra 
	 \[\robber =  \Z[x,t]/((x-t)^2x^2),\quad \varphi_\robber(r_0 + r_1x + r_2x^2 + r_3x^3) = r_3,\] 
    where $r_i \in \Z[t]$.
	 Its fiber over $t=1$ is isomorphic to $(\Z[x]/x^2,\phi_0)\times(\Z[x]/x^2,\phi_0)$, where $\phi_0$ is the orientation of Example~\ref{ex:kx/x2}. Its fiber over $t=0$ is $\robber_0 =\Z[x]/(x^4)$, with orientation $\phi_{\robber_0}$ given by the same formula as $\phi_\robber$.

    One can view $\robber$ as two copies of the dual numbers colliding at $t = 0$.
    Each copy has a natural augmentation as in
    Example~\ref{ex:dualnumbers}, which extends to an augmentation of $\robber$. Explicitly, we have two augmentations $e^\const, e^\mv\colon \robber \to \Z[t]$ given by sending $x$ to $0$ and to $t$. One computes that
	 \begin{align*}
	 	(e^\const)^*(1) &= t^2x -2tx^2 + x^3,\\
	 	(e^\mv)^*(1) &= -tx^2 + x^3,
	 \end{align*}
	 which shows that both $e^\const$ and $e^\mv$ are isotropic. We thus obtain two elements $(\robber,\varphi_\robber, e^\const)$ and $(\robber,\varphi_\robber,  e^\mv)$ in
    $\FGor^{\ori,+}(\Z[t])$, with local socle generators as above.

	 The augmentations $e^\const$ and $e^\mv$ agree at $t=0$ and define an element $(\robber_0, \varphi_{\robber_0}, 
    e_{\robber_0})\in \FGor^{\ori,+}(\Z)$.
    We let $\varepsilon \colon \FGor^{\ori,+} \to \FGor^{\ori,+}$ be the map that
    sends $(Z,\phi,e)\in \FGor^{\ori,+}(S)$ to its connected sum with
    $(\robber_0, \varphi_{\robber_0}, 
    e_{\robber_0})_{S}$ (see Definition~\ref{def:connected sum}).
	 
	 Given $(Z,\phi,e)\in \FGor^{\ori,+}(S)$, we denote by $(\tilde Z,\tilde \phi,\tilde e^\const)$ the connected sum of $\A^1_Z$ with $(\robber,\phi_\robber,e^\const)_S$ in $\FGor^{\ori,+}(\A^1_S)$. Explicitly, if $s\in\sO(Z)$ is the local socle generator, then $\tilde Z$ is the vanishing locus of the function $(-s, t^2x-2tx^2+x^3)$ on the pushout $\A^1_Z\sqcup_{\A^1_S}(\Spec\robber)_S$, which is defined using $e\colon S\to Z$ and $e^\const\colon \A^1\to\Spec\robber$.
	 Note that replacing $x$ by $t$ in $t^2x-2tx^2+x^3$ gives $0$, so the composite
	 \[
	 \A^1_S \xrightarrow{e^\mv} (\Spec\robber)_S \xrightarrow{\mathrm{can}} \A^1_Z\sqcup_{\A^1_S} (\Spec\robber)_S
	 \]
	 lands in $\tilde Z$. This defines another section $\tilde e^\mv\colon \A^1_S\to\tilde Z$, which is moreover isotropic. 
	 Indeed, we have $(\tilde e^\mv)^*(1)=(0,(e^\mv)^*(1))$, hence $\tilde e^\mv((\tilde e^\mv)^*(1))=e^\mv((e^\mv)^*(1))=0$.
	 
    Let $H^{\const}\colon \FGor^{\ori,+}\to \FGor^{\ori,+}(\A^1\times -)$ be the map that
    sends $(Z,\phi,e)$ to $(\tilde Z,\tilde \phi,\tilde e^\const)$, and let $H^{\mv}\colon \FGor^{\ori,+}\to \FGor^{\ori,+}(\A^1\times -)$ be the map that
    sends $(Z,\phi,e)$ to $(\tilde Z,\tilde \phi,\tilde e^\mv)$.
	 Then it is clear that $H^\const_0=H^\mv_0\simeq\varepsilon$.
	 Moreover, we have $H^\const_1\simeq\sigma^+$ and $H^\mv_1\simeq\gamma\circ\theta$.
	 Indeed, the fiber of $(\tilde Z,\tilde\phi)$ over $t=1$ is the disjoint union of $(Z,\phi)$ and $(\Z[x]/x^2,\phi_0)_S$, with $\tilde e^\const_1$ being the given section $e$ to the first summand and $\tilde e^\mv_1$ the canonical section to the second summand.
	 Thus, $H^\const$ and $H^\mv$ are the desired $\A^1$-homotopies. By construction, the underlying oriented Gorenstein schemes of $H^\const(Z,\phi,e)$ and $H^\mv(Z,\phi,e)$ are the same, and we can take the isomorphism $\psi$ to be the identity.
\end{proof}

\begin{prop}\label{prop:bank-robbery}
	The map $\theta^\st\colon \FGor^{\ori,+,\st}\to \FGor^{\ori,\st}$ is an $\A^1$-equivalence.
\end{prop}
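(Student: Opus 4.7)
The plan is to prove that $\theta^\st$ is an $\A^1$-equivalence by showing that the map $\gamma$ stabilizes to an $\A^1$-homotopy inverse $\gamma^\st \colon \FGor^{\ori,\st}\to \FGor^{\ori,+,\st}$. To make $\gamma^\st$ well-defined, I first check that $\gamma$ intertwines the stabilization maps, i.e., that $\sigma^+\circ \gamma\simeq \gamma\circ \sigma$ as endofunctors; unwinding the definitions, both send $(A,\varphi)$ to $(A\oplus R[x]/x^2\oplus R[x]/x^2,\varphi\oplus\varphi_0\oplus\varphi_0)$ with an $R[x]/x^2$-factor-supported augmentation, and they differ only by swapping the two added copies of $R[x]/x^2$, which gives a canonical isomorphism. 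Also, as a general fact about telescopes, the self-map $\sigma$ induces the identity on $\FGor^{\ori,\st}$, and likewise $\sigma^+$ on $\FGor^{\ori,+,\st}$.

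Next I handle the easy direction. By definition, $\theta\circ\gamma=\sigma$ on the nose, because $\gamma$ adds $R[x]/x^2$ together with its canonical augmentation $e_0$ on the new factor, and $\theta$ then throws the augmentation away, leaving precisely $\sigma(A,\varphi)$. Passing to telescopes gives $\theta^\st\circ\gamma^\st=\sigma^\st\simeq\id$ on $\FGor^{\ori,\st}$.

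The other direction requires Construction~\ref{constr:bankrobbery}. The issue is that $\gamma\circ\theta$ is \emph{not} equal to $\sigma^+$: both produce $(A\oplus R[x]/x^2,\varphi\oplus\varphi_0)$ as an oriented Gorenstein algebra, but $\sigma^+$ keeps the original augmentation $e$ on the $A$-factor while $\gamma\circ\theta$ installs the canonical augmentation $e_0$ on the $R[x]/x^2$-factor. Construction~\ref{constr:bankrobbery} supplies precisely the zigzag of $\A^1$-homotopies
\[
\sigma^+\;\stackrel{H^\const}{\leftsquigarrow}\;\varepsilon\;\stackrel{H^\mv}{\rightsquigarrow}\;\gamma\circ\theta
\]
of endofunctors of $\FGor^{\ori,+}$, obtained by degenerating two copies of the dual numbers to $\Z[x]/x^4$ and sliding one augmentation continuously from one component to the other along the parameter $t$. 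Applying this zigzag after passing to the telescope and using $(\sigma^+)^\st\simeq\id$, we conclude $(\gamma\circ\theta)^\st=\gamma^\st\circ\theta^\st\simeq\id$ in the $\A^1$-homotopy category.

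The main obstacle has already been overcome in Construction~\ref{constr:bankrobbery}: the rest is essentially formal manipulation of telescopes and $\A^1$-homotopies. Note that the compatibility isomorphism $\psi$ produced in that construction is not needed for this proposition — it is the existence of the zigzag through $\varepsilon$ that does the work here, whereas $\psi$ will presumably be invoked later when one wants to transport the whole argument across $\theta$ (e.g., to relate $\sigma^\st$ and $(\theta\circ\gamma\circ\theta)^\st$ coherently).
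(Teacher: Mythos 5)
Your proposal rests on the assertion that ``as a general fact about telescopes, the self-map $\sigma$ induces the identity on $\FGor^{\o,\st}$, and likewise $\sigma^+$ on $\FGor^{\o,+,\st}$.'' This is not a general fact, and it is false: the map induced on $\colim(X\xrightarrow{\sigma}X\xrightarrow{\sigma}\cdots)$ by applying $\sigma$ levelwise is the translation by $\sigma$, not the identity (for the constant sequential diagram at $\N$ with transition map $+1$, the colimit is $\Z$ and the levelwise map induces $+1$). The paper itself flags exactly this point: in the proof of Theorem~\ref{thm:main-st} it is stressed that $\tau^\st$ is \emph{not} an equivalence, and that it is an $\A^1$-equivalence only because the cyclic permutation of $\Hyp^{\oplus 3}$ is $\A^1$-homotopic to the identity, via \cite[Proposition 5.1]{deloop4}. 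To salvage your argument (say via 2-out-of-6, needing only that $\sigma^\st$ and $(\sigma^+)^\st$ are $\A^1$-equivalences) you would need the analogous cyclic-permutation statement for three disjoint double points in $\FGor^\o$; but in the paper that statement is only deduced \emph{after} Theorem~\ref{thm:main-st}, in the proof of Theorem~\ref{thm:main-gp}, from the very comparison with $\Vect^\sym$ that Proposition~\ref{prop:bank-robbery} feeds into, so invoking it here is circular. The paper's proof avoids the problem by letting $\gamma$ land one stage further along the telescope (interleaving $\FGor^{\o,+}\xrightarrow{\theta}\FGor^{\o}\xrightarrow{\gamma}\FGor^{\o,+}\to\cdots$), so that the two composites are the transition maps of the telescopes themselves and the induced maps on colimits are cofinality equivalences, never the problematic levelwise translations $\sigma^\st$, $(\sigma^+)^\st$.

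Your dismissal of $\psi$ is the second gap. A levelwise $\A^1$-homotopy between the components of two maps of sequential diagrams does not by itself imply that the induced maps on the colimits agree: the homotopies must be compatible with the structure maps of the telescopes (the discrepancy is a $\lim^1$-type obstruction in the compatibility data). The condition that $\theta\circ H^\const$ and $\theta\circ H^\mv$ equal the canonical isomorphism $\theta\circ\sigma^+\simeq\sigma\circ\theta\simeq\theta\circ\gamma\circ\theta$ is precisely what makes the zigzag of Construction~\ref{constr:bankrobbery} usable at the level of telescopes: it guarantees that the identification of the $(\gamma\circ\theta)$-telescope with $\FGor^{\o,+,\st}$ is compatible with projection along $\theta$, hence that the equivalence produced by the interleaving is $\Lhtp\theta^\st$ itself and that the map induced by $\gamma$ is its inverse. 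So $\psi$ is consumed exactly in this proposition, not later. (A smaller instance of the same issue: the fillers defining your $\gamma^\st$ are the swap of the two added dual-number factors, so even your identification $\theta^\st\circ\gamma^\st\simeq\sigma^\st$ compares diagram maps with different fillers and requires justification.)
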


\begin{proof}
	Consider the diagram
	\[
	\begin{tikzcd}
	    \FGor^{\ori,+} \ar["\theta"]{r} \ar[swap,"\sigma^+"]{d} & \FGor^{\ori} \ar["\sigma"]{d} \ar[swap,"\gamma"]{dl}  \\
		\FGor^{\ori,+}\ar["\theta"]{r} & \FGor^{\ori} \rlap.
	\end{tikzcd}
	\]
	By Construction~\ref{constr:bankrobbery} there is a homotopy $H\colon \Lhtp(\sigma^+)\simeq \Lhtp(\gamma\circ\theta)$ such that $\theta\circ H$ is the canonical isomorphism $\theta\circ \sigma^+\simeq \sigma\circ\theta\simeq \theta\circ\gamma\circ\theta$. It follows that the map $\Lhtp\gamma$ induces in the colimit a map $\Lhtp\FGor^{\o,\st} \to \Lhtp\FGor^{\o,+,\st}$, which is inverse to $\Lhtp\theta^\st$.
\end{proof}

\begin{prop}\label{prop:vect-nu}
	The forgetful map $\pi\colon \FGor^{\nonu}\to \Vect^\sym$ is an $\A^1$-equivalence.
\end{prop}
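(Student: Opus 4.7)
The plan is to show that $\pi$ is an $\A^1$-deformation retract: I will construct a section $s\colon \Vect^\sym\to\FGor^\nonu$ and an explicit $\A^1$-homotopy from $s\circ\pi$ to $\id_{\FGor^\nonu}$.

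First I define the section $s$ by equipping any symmetric bilinear module $(V,B)\in\Vect^\sym(R)$ with the zero multiplication $V\times V\to V$, $(x,y)\mapsto 0$. This is trivially an associative and commutative non-unital $R$-algebra structure, and the compatibility $B(xy,z)=B(x,yz)$ holds since both sides vanish. Thus $(V,B)\in\FGor^\nonu(R)$, and by construction $\pi\circ s=\id_{\Vect^\sym}$.

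Next, for the homotopy, given $(V,B,m)\in\FGor^\nonu(R)$ with multiplication $m\colon V\otimes V\to V$, I consider the scaled multiplication
\[m_t\colon V[t]\otimes_{R[t]} V[t]\to V[t],\qquad m_t(x,y) = t\cdot m(x,y),\]
together with the $R[t]$-bilinear form $B\otimes R[t]$ on $V[t]$. Associativity, commutativity, and the Frobenius identity $B(m_t(x,y),z)=B(x,m_t(y,z))$ all follow directly from the corresponding identities for $m$ by inserting a factor of $t^2$ (respectively $t$) on both sides, and the bilinear form is unchanged so it remains non-degenerate. This defines a morphism
\[H\colon \FGor^\nonu\to \FGor^\nonu(\A^1\times-),\qquad (V,B,m)\mapsto (V[t],B,m_t),\]
whose restriction at $t=1$ is the identity and whose restriction at $t=0$ is $s\circ\pi$.

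The combination $\pi\circ s=\id$ and $H\colon s\circ\pi\simeq\id$ exhibits $\pi$ as an $\A^1$-deformation retract, which is in particular an $\A^1$-equivalence. There is no real obstacle here: the only verification of substance is that the Frobenius compatibility $B(xy,z)=B(x,yz)$ is preserved under scaling the multiplication, which is immediate, and that the form $B$ plays no role in the deformation and thus stays non-degenerate throughout.
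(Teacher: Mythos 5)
Your proof is correct and follows essentially the same route as the paper: the paper also uses the section given by the zero multiplication and an $\A^1$-homotopy obtained by rescaling, writing the family as $tV[t]\subset V[t]$ with form $(tp,tq)\mapsto B_{R[t]}(p,q)$, which is isomorphic over $R[t]$ to your $(V[t],B_{R[t]},m_t)$ with $m_t=t\cdot m$. Nothing further is needed.
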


\begin{proof}
	Let $\nu\colon \Vect^\sym \to\FGor^{\nonu}$ be the map sending a symmetric space $(V,B)$ to the non-unital oriented Gorenstein algebra $(V, B)$ with zero multiplication. 
	 Then $\pi\circ\nu$ is the identity, and the map
	\[
	\FGor^{\nonu} \to \FGor^{\nonu}(\A^1\times-), \quad (V, B) \mapsto (tV[t], (tp,tq)\mapsto B_{R[t]}(p,q)),
	\]
	where $B_{R[t]}$ is the $R[t]$-bilinear extension of $B$, is an $\A^1$-homotopy from $\nu\circ\pi$ to the identity of $\FGor^{\nonu}$.
\end{proof}

Recall that a symmetric space $(V,b)$ is said to be \emph{metabolic} if it has a Lagrangian, i.e., a direct summand $L\subset V$ such that $L=L^\perp$. For example, for every $V\in\Vect(R)$, the hyperbolic space $\Hyp V=\left(V\oplus V^\vee,\left(\begin{smallmatrix} 0 & I\\ I &0\end{smallmatrix}\right)\right)$ is metabolic, with Lagrangian $V\oplus 0 \subset V\oplus V^\vee$. When 2 is invertible, all metabolic spaces are in fact of this form. The following lemma shows that this is also the case up to $\A^1$-homotopy, even when 2 is not invertible.
    
\begin{lem}\label{lem:metabolic=hyperbolic}
	Let $(V,b)$ be a symmetric space over a ring $R$.
   If $(V,b)$ is metabolic with Lagrangian $L$, then the class of $(V,b)$ in $\pi_0(L_{\A^1}\Vect^\sym)(R)$ is equal to the class of $\Hyp L$.
\end{lem}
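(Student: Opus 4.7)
The plan is to produce a direct $\A^1$-family of symmetric bilinear forms on a fixed $R[t]$-module that specializes to $\Hyp L$ at $t=0$ and to $(V,b)$ at $t=1$.

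First, I would choose an $R$-module splitting $V \simeq L \oplus L^\vee$ of the short exact sequence
\[
0 \to L \to V \to V/L \to 0,
\]
where the right-hand map identifies $V/L$ with $L^\vee$ via the perfect pairing induced by $b$ (this uses $L = L^\perp$ and that $L$ is a direct summand). Such a splitting exists because $L^\vee$ is a direct summand of a free module, hence projective. In this splitting, $b$ has matrix
\[
\begin{pmatrix} 0 & I \\ I & \gamma \end{pmatrix}
\]
for some $R$-linear map $\gamma\colon L^\vee \to L$, which is symmetric (i.e., self-dual) because $b$ is. The vanishing of the top-left block expresses the Lagrangian condition $b|_L = 0$, and the identity block records that the pairing between $L$ and the chosen complement realizes the isomorphism $V/L \simeq L^\vee$.

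Next I would rescale the ``defect'' $\gamma$ by a parameter $t$. Consider the symmetric bilinear form $b_t$ on the $R[t]$-module $(L \oplus L^\vee) \otimes_R R[t]$ whose matrix is
\[
\begin{pmatrix} 0 & I \\ I & t\gamma \end{pmatrix}.
\]
The determinant of this block matrix is $\pm 1 \in R[t]^\times$, so $(V\otimes R[t], b_t)$ defines an object of $\Vect^\sym(R[t])$, i.e., a morphism $\A^1_R \to \Vect^\sym$. Specializing at $t=0$ gives $\Hyp L$, and specializing at $t=1$ gives $(V, b)$. This exhibits an $\A^1$-homotopy between the two classes, yielding the equality in $\pi_0(L_{\A^1}\Vect^\sym)(R)$.

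I expect no serious obstacle: the only subtle point is checking that an $R$-linear splitting of $V \twoheadrightarrow L^\vee$ exists when $2$ is not invertible, which is handled by projectivity of $L^\vee$ rather than by the usual characteristic-not-$2$ averaging trick. The key observation making this work in the non-$2$-invertible setting is that we do not need to diagonalize the form: rescaling the single block $\gamma$ by $t$ is sufficient to kill it up to $\A^1$-homotopy while keeping the off-diagonal identity intact, so the form remains non-degenerate throughout the family.
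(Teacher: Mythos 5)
Your proof is correct and is essentially the same as the paper's: both decompose $V$ as $L\oplus L^\vee$ using the non-degeneracy of $b$ together with the fact that $L$ is a direct summand, observe that $b$ then has the form $\left(\begin{smallmatrix}0 & I\\ I & A\end{smallmatrix}\right)$ with $A$ symmetric, and rescale the bottom-right block by $t$ to produce the $\A^1$-homotopy to $\Hyp L$. The only cosmetic difference is that you justify the splitting by projectivity of $L^\vee$ whereas the paper just invokes the hypothesis that $L$ is a direct summand.
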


\begin{proof}
Let $W$ be a complement of $L$ in $V$. Then the map $W\to V\simeq V^\vee\to L^\vee$ is an isomorphism and we can identify $V$ as $L\oplus L^\vee$. Under this decomposition $b$ is given by the matrix
\[\begin{pmatrix} 0 & I\\ I & A\end{pmatrix},\]
where $A$ is some symmetric bilinear form on $L^\vee$. Then
\[\left(L[t]\oplus L[t]^\vee,\ \begin{pmatrix} 0 & I\\ I & tA\end{pmatrix}\right)\in \Vect^\sym(R[t])\]
is an $\A^1$-homotopy between $(V,b)$ and $\Hyp L$.
\end{proof}

\begin{lem}\label{lem:invert-hyp}
    Let $\Hyp=\Hyp \Z\in \Vect^\sym(\Z)$, and let $\Vect^\sym[-\Hyp]$ be the commutative monoid obtained from $\Vect^\sym$ by additively inverting $\Hyp$.
	 \begin{enumerate}
		\item The cyclic permutation of $\Hyp^{\oplus 3}$ is $\A^1$-homotopic to the identity.
	 	\item The canonical map $\Vect^\sym[-\Hyp]\to \Vect^{\sym,\gp}$ is an $\A^1$-equivalence on affine schemes.
	 	\item The canonical map $\Vect^{\sym,\st}\to\Vect^{\sym,\gp}$ is an $\A^1$-equivalence on affine schemes.
	 \end{enumerate}
\end{lem}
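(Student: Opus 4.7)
For (1), the plan is to realize the cyclic permutation of $\Hyp^{\oplus 3}$ as an element of $\SL_3(\Z)$ sitting inside the orthogonal group. Under the isomorphism $\Hyp^{\oplus 3}\simeq \Hyp(\Z^3)$ and the standard hyperbolic embedding $\GL_3\hookrightarrow O(\Hyp(\Z^3))$ given by $A\mapsto \bigl(\begin{smallmatrix} A & 0 \\ 0 & (A^T)^{-1}\end{smallmatrix}\bigr)$, the cyclic permutation of the three $\Hyp$ summands corresponds to the $3\times 3$ cyclic permutation matrix $\sigma\in\SL_3(\Z)$ (since the permutation matrix $\sigma$ satisfies $\sigma^T=\sigma^{-1}$). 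Since $\sigma$ can be written as a product of elementary matrices $E_{ij}(a)$, and each elementary matrix is $\A^1$-homotopic to the identity via the obvious path $t\mapsto E_{ij}(ta)$, we obtain an $\A^1$-homotopy from $\sigma$ to the identity inside $O(\Hyp^{\oplus 3})$, inducing the desired homotopy of automorphisms of $\Hyp^{\oplus 3}$ in $\Vect^\sym$.

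For (2), I plan to show that $L_{\A^1}\Vect^\sym[-\Hyp]$ is already grouplike on affine schemes; the universal property of group completion then forces the canonical map $L_{\A^1}\Vect^\sym[-\Hyp]\to L_{\A^1}\Vect^{\sym,\gp}$ to be an equivalence. The crucial input from (1) is the standard $\Einfty$-machinery result that under the cyclic-permutation hypothesis, the $\Einfty$-localization $\Vect^\sym[-\Hyp]$ is modeled by the sequential telescope along $+\Hyp$, with homotopy groups given by the monoid-theoretic localizations of those of $\Vect^\sym$; in particular $\pi_0(L_{\A^1}\Vect^\sym[-\Hyp])(R)= \pi_0(L_{\A^1}\Vect^\sym)(R)[-\Hyp]$ for affine $R$.

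To verify that this localized monoid is a group, I take $(V,b)\in\Vect^\sym(R)$ arbitrary. The diagonal submodule $\{(v,v):v\in V\}$ is a Lagrangian of $(V,b)\oplus(V,-b)$, so by Lemma~\ref{lem:metabolic=hyperbolic} the sum is $\A^1$-equivalent to $\Hyp V$. Since $R$ is affine, $V$ admits a projective complement $V'$ with $V\oplus V'\simeq R^n$, whence $\Hyp V\oplus \Hyp V'\simeq \Hyp(V\oplus V')\simeq \Hyp(R^n)\simeq n\cdot \Hyp$ in $\Vect^\sym(R)$. Combining these, $[(V,b)]+[(V,-b)]+[\Hyp V']=n[\Hyp]$ in $\pi_0 L_{\A^1}\Vect^\sym(R)$, exhibiting $[(V,b)]$ as invertible after inverting $\Hyp$. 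Statement (3) then follows immediately: the stabilization $\tau$ adds a copy of the constant hyperbolic form $\Hyp\in\Vect^\sym(\Z)$, so the telescope $\Vect^{\sym,\st}$ is by construction a model for $\Vect^\sym[-\Hyp]$, and (3) reduces to (2).

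The main obstacle I anticipate is justifying the implicit $\Einfty$-technology in (2): namely, that the cyclic-permutation hypothesis from (1) implies the $\Einfty$-localization $\Vect^\sym[-\Hyp]$ coincides with the naive sequential telescope, with homotopy groups computed by the corresponding monoid localizations. Once this standard result is granted, the Lagrangian-plus-projective-complement argument closing out the proof of (2) is completely elementary, and (1) and (3) are essentially bookkeeping.
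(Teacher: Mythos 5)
Your proposal is correct and takes essentially the same route as the paper: part (1) is exactly $\Hyp$ applied to the cyclic permutation of $\Z^3$, written as a product of elementary matrices with straight-line $\A^1$-homotopies, and part (2) is the same metabolic/Lagrangian argument via Lemma~\ref{lem:metabolic=hyperbolic} together with $\Hyp V\oplus\Hyp W\simeq\Hyp(R^n)$. The one point to state carefully is the "standard $\Einfty$-machinery" you flag: the telescope $\Vect^{\sym,\st}$ is \emph{not} a model for $\Vect^\sym[-\Hyp]$ by construction, but only becomes one after $\A^1$-localization, where the cyclic permutation of $\Hyp^{\oplus 3}$ acts as the identity---this is precisely \cite[Proposition 5.1]{deloop4}, which the paper invokes for part (3) (and which you do not need for part (2), since $\pi_0$ of the $\Einfty$-localization is the monoid-theoretic localization of $\pi_0$ for formal reasons).
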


\begin{proof}
	(1) The cyclic permutation of $\Hyp^{\oplus 3}$ is given by applying the functor $\Hyp$ to the cyclic permutation of $\Z^3$ in $\Vect$, which is $\A^1$-homotopic to the identity (being a product of elementary matrices).
	
    (2) It suffices to show that every element of $\pi_0(L_{\A^1}\Vect^\sym[-\Hyp])(R)$ is additively invertible. Every object $(V,b)\in\Vect^\sym(R)$ is a summand of $(V,b)\oplus (V,-b)$, which is metabolic with Lagrangian given by the diagonal copy of $V$, so it suffices to show that every metabolic object is invertible. By Lemma~\ref{lem:metabolic=hyperbolic}, it then suffices to show that every object of the form $\Hyp V$ is invertible. But if $W$ is such that $V\oplus W=R^n$, we have $\Hyp V\oplus \Hyp W=\Hyp(R^n)=(\Hyp R)^{\oplus n}$, and so $\Hyp V$ is invertible.
	 
	 (3) This follows from (1) and (2) by \cite[Proposition 5.1]{deloop4}.
\end{proof}

\begin{proof}[Proof of Theorem~\ref{thm:main-st}]
	By Lemma~\ref{lem:local_socle}, there is a commutative square
	\[
	\begin{tikzcd}
	\FGor^{\o,+} \ar{r}{\theta} \ar{d}[swap]{(\id\times\pi)\circ\Aug} & \FGor^\o \ar{d}{\eta} \\
	\A^1\times \Vect^\sym \ar{r}{\tilde\tau} & \Vect^\sym\rlap,
	\end{tikzcd}
	\]
	where $\Aug\colon \FGor^{\o,+}\to\A^1\times \FGor^{\nonu}$ was defined in Proposition~\ref{prop:nonunital=augmented} and $\tilde\tau(\lambda,-)$ adds a copy of the bilinear form $\left(\begin{smallmatrix}\lambda & 1 \\ 1 & 0 \end{smallmatrix}\right)$.
	Moreover, this square fits in a commutative cube with the respective ``stabilization maps'' $\sigma^+$, $\sigma$, $\id_{\A^1}\times\tau$, and $\tau$, and in the colimit we obtain a commutative square
	\[
	\begin{tikzcd}
	\FGor^{\o,+,\st} \ar{r}{\theta^\st} \ar{d}[swap]{((\id\times\pi)\circ\Aug)^\st} & \FGor^{\o,\st} \ar{d}{\eta^\st} \\
	\A^1\times \Vect^{\sym,\st} \ar{r}{\tilde\tau^\st} & \Vect^{\sym,\st}\rlap.
	\end{tikzcd}
	\]
	By Propositions \ref{prop:nonunital=augmented} and~\ref{prop:vect-nu}, the left vertical map is an $\A^1$-equivalence (already in the unstable square).
	By Proposition~\ref{prop:bank-robbery}, $\theta^\st$ is an $\A^1$-equivalence. We have $\tilde\tau^\st(0,-)=\tau^\st$, where $\tau^\st\colon \Vect^{\sym,\st}\to\Vect^{\sym,\st}$ is the action of $\Hyp\in\Vect^\sym(\Z)$ on the $\Vect^\sym$-module $\Vect^{\sym,\st}$.  Note that $\tau^\st$ is \emph{not} an equivalence, but it is an $\A^1$-equivalence by \cite[Proposition 5.1]{deloop4} since the cyclic permutation of $\Hyp^{\oplus 3}$ is $\A^1$-homotopic to the identity (Lemma~\ref{lem:invert-hyp}(1)). Thus, $\tilde\tau^\st$ is also an $\A^1$-equivalence. We conclude that $\eta^\st$ is an $\A^1$-equivalence, as desired.
\end{proof}

\begin{proof}[Proof of Theorem~\ref{thm:main-gp}]
	As above, let $\Vect^\sym[-\Hyp]$ be obtained from $\Vect^\sym$ by additively inverting $\Hyp$, and let $\FGor^\o[-\Z[x]/x^2]$ be obtained from $\FGor^\o$ by additively inverting the oriented Gorenstein algebra $(\Z[x]/x^2, \varphi_0)$ from Example~\ref{ex:kx/x2}. We have a commutative square 
		\[
	\begin{tikzcd}
		 \FGor^{\o,\st} \ar{r}   \ar[swap,"\eta^\st"]{d}  & \FGor^\o[-\Z[x]/x^2] \ar{d} \\
		\Vect^{\sym,\st} \ar{r} & \Vect^\sym[-\Hyp] \rlap.
	\end{tikzcd}
	\]
	By Lemma~\ref{lem:invert-hyp}(1), the cyclic permutation of $\Hyp^{\oplus 3}$ becomes the identity in $\Lhtp\Vect^\sym$, which by \cite[Proposition 5.1]{deloop4} implies that the lower horizontal map is an $\A^1$-equivalence.
	By Theorem~\ref{thm:main-st}, the left vertical map is an $\A^1$-equivalence. In particular, the cylic permutation of $(\Z[x]/x^2)^{\times 3}$ also becomes the identity in $\Lhtp\FGor^{\o,\st}$. It then follows again from \cite[Proposition 5.1]{deloop4} that the upper horizontal map is an $\A^1$-equivalence. Hence, the right vertical map is an $\A^1$-equivalence.
	 Since the functor $\Lhtp$ commutes with group completion \cite[Lemma 5.5]{HoyoisCdh}, we deduce that $ \FGor^{\o,\gp}\to\Vect^{\sym,\gp}$ is an $\A^1$-equivalence.
\end{proof}

\begin{rem}\label{rem:FGor-gp}
	Combining Theorems~\ref{thm:main-gp} and \ref{thm:main-st} with Lemma~\ref{lem:invert-hyp}(3), we deduce that the canonical map $\FGor^{\o,\st}\to \FGor^{\o,\gp}$ is an $\A^1$-equivalence on affine schemes.
\end{rem}

\section{Consequences in complexity theory}

In this short section we derive consequences of Proposition~\ref{prop:vect-nu}
for structure tensors of finite algebras.  This section is an interesting side
application of our methods: it has no relation to the subsequent sections, however it is of
interest for complexity theory. Because of this target audience,
we strive to be more explicit than elsewhere. In this section we work over an
algebraically closed field $k$.

For $q\geq 1$ the \emph{G-fat} point~\cite{Casnati_Notari_6points} is the spectrum of a
locally free $k$-algebra $A_q$ of degree $q+2$ presented as
\[
    A_q := \frac{k[y_1, \ldots ,y_q]}{(y_iy_j\ |\ i\neq j) + (y_i^2 - y_j^2\
    |\ i\neq j)+(y_1^3)}.
\]
This is an oriented Gorenstein algebra with orientation $\varphi_0 :=
(y_1^2)^*\in \Hom_k(A_q, k)$, and its unique augmentation is isotropic by
Example~\ref{ex:isotropic}. In the equivalence from
Proposition~\ref{prop:nonunital=augmented} the triple $(A_q, \varphi, e)$
corresponds to $(0, V_0)$, where $V_0$ is spanned by self-dual elements $y_1,
\ldots ,y_q$ and equipped with a trivial multiplication.

Let $(C, \varphi, e)$ be another isotropically augmented Gorenstein algebra of
degree $q+2$.
As in Definition~\ref{dfn:oriented-Gorenstein-algebra}, we have a
non-degenerate form $B_{\varphi}$ on $C$ and a local socle generator $x\in C$.
(If $C$ is local, then $x$ is exactly its socle generator, see
Lemma~\ref{lem:local_socle}). Let $V = 1_C^{\perp}\cap x^{\perp} \subset C$.
Then $B_{\varphi}$
restricts to a non-degenerate form on $V$, so as a $k$-vector space we have $C = k\cdot 1_C\oplus kx \oplus V$.

A $\G_m$-equivariant \emph{degeneration} of a finite $k$-algebra $C$ to a finite $k$-algebra $A$
is a finite flat family $f\colon \mathcal{X}\to \A^1$ together with a
$\G_m$-action on $\mathcal{X}$ making $f$ equivariant with respect to the usual
$\G_m$-action on $\A^1$ and such that $\mathcal{X}_1  \simeq \Spec(C)$ and
$\mathcal{X}_0  \simeq \Spec(A)$. The $\G_m$-equivariance implies that the fiber
of $\mathcal{X}$ over every nonzero $k$-point of $\A^1$ is isomorphic to $\Spec(C)$.

Proposition~\ref{prop:vect-nu} and Proposition~\ref{prop:nonunital=augmented}
contain a construction of a degeneration of $C$ to $A_q$. Explicitly, it is given by $f\colon\Spec(\mathcal{C})\to
\Spec(k[t])$, where $\mathcal{C}$
is a free $k[t]$-module $(k[t]\cdot 1 \oplus k[t]x)\oplus V[t]$ with
multiplication given by
\[
    (r+sx,v)\cdot (r'+s'x,v')=(rr'+(sr'+s'r+B_{\varphi}(v,v'))x, r'v+rv'+tvv'),
\]
where $v,v'\in V[t]$ and $B_{\varphi}$ extends $k[t]$-linearly.
The algebra $\mathcal{C}$ admits a $\G_m$-action given by $t\cdot (r+sx, v) =
(r+st^{-2}x, t^{-1}v)$ which proves that $f$ is a degeneration.
\begin{prop}\label{prop:abstractDegenerations}
    Let $q\geq 1$ and let $C$ be a Gorenstein $k$-algebra of degree $q+2$. Then $C$ admits a $\G_m$-equivariant 
    degeneration to $A_q$.
\end{prop}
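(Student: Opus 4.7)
The plan is to reduce to the case where $C$ admits an isotropic augmentation and then invoke verbatim the explicit construction spelled out in the paragraph preceding the proposition. Since $k$ is algebraically closed, $C$ decomposes as a product of local $k$-algebras $C_{\mathfrak m_i}$ with residue field $k$, and by Example~\ref{ex:isotropic} the augmentation at $\mathfrak m_i$ is isotropic if and only if $C_{\mathfrak m_i}\not\simeq k$. Thus $C$ has an isotropic augmentation except when every local factor is one-dimensional, i.e., when $C\simeq k^{q+2}$.

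Assume first that an isotropic augmentation $e$ exists. I would pick any orientation $\varphi$ of $C$ (which exists because $C$ is finite Gorenstein over a field), set $x=e^\ast(1)$ and $V=1_C^\perp\cap x^\perp$, and feed the triple $(C,\varphi,e)$ into the construction preceding the proposition. This produces a $\Gm$-equivariant family $\Spec\mathcal{C}\to\A^1$ with general fiber $C$ and special fiber the unitalization of $(V,B_\varphi,0)$, i.e., the $k$-algebra $k\cdot 1\oplus kx\oplus V$ with $x^2=0$, $xV=0$ and $v\cdot v'=B_\varphi(v,v')\,x$ for $v,v'\in V$. Over the algebraically closed field $k$, the non-degenerate symmetric form $B_\varphi$ on the $q$-dimensional space $V$ is isometric to the standard diagonal form, and the multiplication table on the limit then coincides with that of $A_q$ under the identification of $x$ with $y_1^2=\cdots=y_q^2$, as already spelled out in the discussion before the proposition.

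In the étale case $C\simeq k^{q+2}$ no augmentation is isotropic, so I would first construct an auxiliary $\Gm$-equivariant degeneration to a local algebra. Fixing distinct $a_1,\dots,a_{q+2}\in k$, the family
\[\Spec\, k[y,T]\big/\prod\nolimits_{i=1}^{q+2}(y-a_iT)\too \Spec k[T],\]
with $y$ and $T$ carrying the same positive $\Gm$-weight, is $\Gm$-equivariant with general fiber $k^{q+2}$ and special fiber $k[y]/y^{q+2}$. The latter is local of dimension $q+2\ge 3$ and has a unique (isotropic) augmentation, so the previous paragraph applies and yields a further $\Gm$-equivariant degeneration to $A_q$. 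I would then splice the two degenerations by embedding both families in a common Hilbert scheme of finite subschemes of a sufficiently large affine space, equipped with a single $\Gm$-action realizing both orbit-closure specializations in succession.

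The step I expect to be the hardest is precisely this splicing: combining two consecutive $\Gm$-equivariant degenerations into a single one requires a careful choice of $\Gm$-weights on the ambient affine space so that both stages are simultaneously equivariant for one $\Gm$. The rest of the argument is a direct invocation of the construction preceding the proposition, together with the algebraically closed classification of non-degenerate symmetric bilinear forms identifying $(V,B_\varphi)$ with the standard form.
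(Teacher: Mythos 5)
Your argument splits into the same two cases as the paper's, and in the non-reduced case (where an isotropic augmentation exists) your reasoning is essentially identical: apply the explicit $\Gm$-equivariant family $\mathcal{C}$ built from the Rees construction of Proposition~\ref{prop:nonunital=augmented}, and identify the special fiber with $A_q$ using that all non-degenerate rank-$q$ symmetric forms over an algebraically closed field are isometric. This part is fine and matches the paper.

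The divergence, and the genuine gap, is in the reduced case $C\simeq k^{q+2}$. You propose a two-step chain $k^{q+2}\rightsquigarrow k[y]/y^{q+2}\rightsquigarrow A_q$ and then want to ``splice'' the two $\Gm$-equivariant degenerations into one. Each step is individually correct: your family $\Spec k[y,T]/\prod_i(y-a_iT)\to\Spec k[T]$ with equal positive weights on $y$ and $T$ is a valid $\Gm$-equivariant degeneration of $k^{q+2}$ to $k[y]/y^{q+2}$, and the latter is local with $q+2\ge 3$, hence has an isotropic augmentation, so the first case applies. But the composite step is not justified. Two consecutive $\Gm$-equivariant degenerations correspond to two different multiplicative filtrations living on two different algebras ($C$ and $\gr C$), and it is not automatic that they can be realized by a single one-parameter subgroup; at the level of your Hilbert-scheme heuristic, the two degenerations use incompatible weight data and you never exhibit an ambient $\Gm$-action achieving both limits in succession. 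Making this rigorous amounts to a filtration-refinement argument (grade the associated graded, then linearize the resulting $\Z^2$-filtration), which is where the real work would be, and which you explicitly leave undone. The paper avoids the whole issue by giving a \emph{single} $\Gm$-equivariant family directly from $k^{q+2}$ to $A_q$: take $q+2$ general lines $\Gamma$ through the origin of $\A^{q+1}$, which are scaling-invariant, and slice by the moving hyperplane $y=t$; the special fiber is Gorenstein because general points in $\P^q$ are arithmetically Gorenstein, and has $q$-dimensional tangent space, which forces Hilbert function $(1,q,1)$ and hence isomorphism with $A_q$. You should either adopt that direct construction or supply the filtration-refinement argument for your splicing step.
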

\begin{proof}
    Choose an orientation of $C$. If $C$ has nonzero nilpotent radical, then by
    Example~\ref{ex:isotropic} it admits an isotropic augmentation and the
    associated degeneration $\mathcal{C}$ above proves our claim.
    It remains to consider reduced $C$. Since $k$ is algebraically closed, we
    have $C = \prod_{q+2} k$.
    In this case, choose a set $\Gamma$ of $q+2$ general lines through the origin of
    $\A^{q+1}$ and a hyperplane $H = (y = 1)$ intersecting them transversely.
    Then $\Gamma\cap (y = t)$ is a degeneration over $\A^1$ with
    parameter $t$. Since a general tuple of $q+2$ points on $\P^{q}$ is
    arithmetically Gorenstein, the special fiber of the degeneration is
    Gorenstein. This fiber has $q$-dimensional tangent space, which implies
    that it is $\Spec(A_q)$.
\end{proof}
\begin{rem}
    Proposition~\ref{prop:abstractDegenerations} in the special case when $C$
    is local was obtained
    in~\cite{Casnati_Notari_6points}, by completely different means.
    This proposition also shows that the Gorenstein locus of the Hilbert scheme of
    $d$ points on $\A^n$ is
    connected by rational curves whenever $n\geq d-2$. In general, even
    topological connectedness of the Gorenstein locus is open.
\end{rem}
For the terminology on tensors below, we refer
to~\cite[5.6.1]{Landsberg__complexity_book}.
\begin{cor}\label{cor:degTensors}
    Let $m\geq 3$ and let $T\in k^m\otimes k^m\otimes k^m$ be a $1$-generic tensor of minimal
    border rank (that is, of border rank $m$). Then $T$ degenerates to the big
    Coppersmith--Winograd tensor $\mrm{CW}_{m-2}$.
\end{cor}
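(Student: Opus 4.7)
My plan is to translate the claim into the language of oriented Gorenstein algebras and reduce it to Proposition~\ref{prop:abstractDegenerations}. By the classical correspondence in algebraic complexity, a $1$-generic tensor $T \in k^m \otimes k^m \otimes k^m$ of minimal border rank $m$ can be realized, up to the $\mathrm{GL}_m(k)^{\times 3}$-action on tensors, as the structure tensor $T_{(C,\varphi)}$ of a smoothable oriented Gorenstein $k$-algebra $(C,\varphi)$ of degree $m$: after identifying $C \simeq C^\vee$ via the nondegenerate form $B_\varphi$, one sets $T_{(C,\varphi)}(a,b,c) = \varphi(abc)$, which is symmetric in its three arguments by the Frobenius identity. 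Here $1$-genericity supplies the unit of $C$, while minimal border rank is what forces $\Spec(C)$ to be smoothable of degree $m$. The first step is therefore to present $T \simeq T_{(C,\varphi)}$ for such a $(C,\varphi)$.

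Second, I verify by direct inspection that the big Coppersmith--Winograd tensor $\mathrm{CW}_{m-2}$ is itself the structure tensor of $(A_{m-2},\varphi_0)$. In the basis $(1, y_1, \ldots, y_{m-2}, y_1^2)$ of $A_{m-2}$, every triple product of basis vectors lies in $\ker\varphi_0$ except for $1\cdot y_i\cdot y_i = y_1^2$ (for each $i=1,\dots,m-2$) and $1\cdot 1\cdot y_1^2 = y_1^2$, each of which $\varphi_0 = (y_1^2)^\vee$ sends to $1$. Summing over the cyclic permutations in the three tensor slots recovers exactly the standard formula for $\mathrm{CW}_{m-2}$.

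Third, applying Proposition~\ref{prop:abstractDegenerations} to $(C,\varphi)$ produces a $\G_m$-equivariant finite flat Gorenstein family $\mathcal{C}\to\A^1$ with $\mathcal{C}_1\simeq C$ and $\mathcal{C}_0\simeq A_{m-2}$. The explicit construction given in that proof already carries a global orientation of $\mathcal{C}/\A^1$: the form $B_\varphi$ extends $k[t]$-linearly, nondegeneracy is an open condition that holds on both endpoint fibers, and the isotropic augmentation of $C$ is precisely what lets the orientation specialize to $\varphi$ at $t=1$ and to $\varphi_0$ at $t=0$. Taking the fiberwise structure tensor then produces a $\G_m$-equivariant flat family of tensors in $k^m\otimes k^m\otimes k^m$ with generic fiber isomorphic to $T$ and special fiber $\mathrm{CW}_{m-2}$, which is by definition a degeneration of $T$ to $\mathrm{CW}_{m-2}$.

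The main obstacle is the first step: invoking the bijection between $1$-generic minimal border rank tensors and smoothable oriented Gorenstein algebras. This correspondence is well known in the algebraic complexity literature but nontrivial, and it is the bridge that lets the algebra-theoretic Proposition~\ref{prop:abstractDegenerations} control tensor degenerations. Once that bridge is in place, the remainder is a direct translation, with the only genuine technical subtlety being the existence of a global orientation across the family, which is immediate from the explicit $k[t]$-linear formulas in the proof of Proposition~\ref{prop:abstractDegenerations}.
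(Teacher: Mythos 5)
Your overall route is the same as the paper's: identify $T$, via the correspondence in \cite[5.6.2.1]{Landsberg__complexity_book}, with the structure tensor of a Gorenstein algebra $C$ of degree $m$, identify $\mrm{CW}_{m-2}$ with the structure tensor of $A_{m-2}$, and conclude from Proposition~\ref{prop:abstractDegenerations}. Your explicit check that the trilinear form $\varphi_0(abc)$ on $A_{m-2}$ reproduces the standard presentation of $\mrm{CW}_{m-2}$ is a useful addition that the paper leaves implicit (your mention of smoothability is harmless but not needed: Proposition~\ref{prop:abstractDegenerations} applies to every Gorenstein algebra of degree $m$).

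One claim in your third step is inaccurate, though it does not sink the argument. You assert that the degenerating family ``already carries a global orientation'' because of ``the explicit $k[t]$-linear formulas in the proof of Proposition~\ref{prop:abstractDegenerations}''. Those formulas only cover the case where $C$ has nonzero nilradical, i.e.\ admits an isotropic augmentation (Example~\ref{ex:isotropic}); for reduced $C\simeq k^m$ the proposition's proof uses a separate geometric degeneration (general points on lines through the origin), for which no orientation of the family is exhibited. Fortunately no global orientation is needed to pass from the algebra degeneration to a tensor degeneration: the family $\mathcal X\to\A^1$ is finite flat, so $\sO(\mathcal X)$ is a finitely generated projective, hence free, $k[t]$-module, and a choice of $k[t]$-basis turns the multiplication map into a polynomial family of tensors whose fibers at $t\neq 0$ are isomorphic to the multiplication tensor of $C$ (by $\G_m$-equivariance) and whose fiber at $t=0$ is the multiplication tensor of $A_{m-2}$. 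Orientations are only needed fiberwise at the two endpoints, to identify these multiplication tensors with $T$ and with $\mrm{CW}_{m-2}$ respectively (using $B_\varphi\colon C\simeq C^\vee$), and those you have. With that adjustment your proof coincides with the paper's.
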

\begin{proof}
    As explained in~\cite[5.6.2.1]{Landsberg__complexity_book}, the tensor $T$
    is isomorphic to a structure tensor of a Gorenstein algebra $C$. The tensor
    $\mrm{CW}_{m-2}$ is isomorphic to the structure tensor of $A_{m-2}$. The claim
    follows directly from Proposition~\ref{prop:abstractDegenerations}.
\end{proof}
\begin{rem}
    The assumptions of Corollary~\ref{cor:degTensors} can be much weakened. We
    only need to assume that $T$ is $1$-generic and satisfies Strassen's
    equations, see~\cite[\sectsign 2.1]{Landsberg_Michalek__Abelian_Tensors} for
    their definition. Indeed, to such a $T$ we associate a commuting tuple of
    matrices~\cite[Definition 2.7]{Landsberg_Michalek__Abelian_Tensors}, hence a
    module over a polynomial ring~\cite[Introduction]{Jelisiejew_Sivic}. Since $T$ is $1$-generic, it
    is $1_B$-generic, hence this module is cyclic and can be viewed as a $k$-algebra.
    By~\cite[5.6.2.1]{Landsberg__complexity_book} this algebra is
    Gorenstein and we argue as in the proof of~Corollary~\ref{cor:degTensors}.
\end{rem}

\section{Oriented Hilbert scheme and orthogonal Grassmannian}

\begin{defn}
    Let $X\to S$ be a morphism of schemes. The \emph{oriented Hilbert scheme} 
	 \[\Hilb^{\Gor,\o}(X/S)\colon \Sch_S^\op\to\Set\]
	 is the pullback
    \[\Hilb^{\Gor,\o}(X/S):=\Hilb(X/S)\times_{\FFlat} \FGor^\o\,.\]
    That is, $\Hilb^{\Gor,\o}(X/S)$ classifies finite locally free subschemes of $X$ that are Gorenstein and equipped with an orientation.
\end{defn}

\begin{rem}
    If $X\to S$ is such that $\Hilb(X/S)$ is a scheme (for example, $X$ is quasi-projective over $S$) or an algebraic space (for example, $X$ is separated over $S$), then so is $\Hilb^{\Gor,\o}(X/S)$. Indeed, the forgetful map $\Hilb^{\Gor,\o}(X/S)\to \Hilb(X/S)$ is a base change of $\FGor^\o\to \FFlat$, which is representable by schemes by Remark~\ref{rem:FGoror}.
\end{rem}

From now on we will be interested in the ind-scheme $\Hilb^{\Gor,\o}(\A^\infty) := \colim_n \Hilb^{\Gor,\o}(\A^n)$ over $\Spec\Z$. 
Note that we have a coproduct decomposition
\[\Hilb^{\Gor,\o}(\A^\infty)\simeq\coprod_{d\ge0} \Hilb^{\Gor,\o}_d(\A^\infty)\]
where $\Hilb^{\Gor,\o}_d(\A^\infty)$ classifies the oriented Gorenstein subschemes of $\A^\infty$ of degree $d$.

\begin{prop}\label{prop:Hilb-FGor}
The forgetful map $\Hilb^{\Gor,\o}(\A^\infty)\to \FGor^{\o}$ is a universal $\A^1$-equivalence on affine schemes.
\end{prop}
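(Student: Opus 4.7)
The plan is to base-change along an arbitrary map $S\to \FGor^\o$ from an affine scheme, corresponding to an oriented Gorenstein family $(p\colon Z\to S, \varphi)$, and show that the pulled-back projection $p_\infty\colon U_\infty\to S$ is an $\A^1$-equivalence. By the universal property of the Hilbert scheme, a closed immersion $Z_T\hookrightarrow \A^n_T$ over an affine $T\to S$ corresponds to an $n$-tuple of sections of $p_*\sO_Z\otimes_{\sO_S}\sO_T$ generating it as an $\sO_T$-algebra, so this pullback is identified with $U_\infty:=\colim_n U_n$, where $U_n$ is the open subscheme of the vector bundle $V_n:=(p_*\sO_Z)^n$ over $S$ consisting of generating $n$-tuples.

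Since $S$ is affine and $\Gamma(S,p_*\sO_Z)$ is finitely generated as an $\sO(S)$-algebra (being finite over $\sO(S)$), we may choose $g_1,\ldots,g_m\in \Gamma(S,p_*\sO_Z)$ generating it, yielding a section $\sigma\colon S\to U_m\hookrightarrow U_\infty$ of $p_\infty$. The plan is then to construct an $\A^1$-homotopy $\sigma\circ p_\infty\simeq_{\A^1}\id_{U_\infty}$, which together with $p_\infty\circ\sigma=\id_S$ proves that $p_\infty$ is an $\A^1$-equivalence. The construction exploits the elementary observation that any sequence containing a generating subsequence is itself generating. Writing $\tau=(a_1,a_2,\ldots)$ for a general $T$-point of $U_\infty$, the formulas
\[
H_1(t,\tau)=(g_1,\ldots,g_m,\,ta_1,ta_2,\ldots),\qquad H_2(t,\tau)=(tg_1,\ldots,tg_m,\,a_1,a_2,\ldots)
\]
define morphisms $\A^1\times U_\infty\to U_\infty$: in $H_1$ the constant prefix $g_1,\ldots,g_m$ already generates, and in $H_2$ the suffix $a_1,a_2,\ldots$ does. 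Moreover, $H_1(0,-)=\sigma\circ p_\infty$, $H_1(1,-)=H_2(1,-)$ is the concatenation $(\sigma,-)$, and $H_2(0,-)=\iota_m$ is the $m$-fold right-shift $\tau\mapsto(0,\ldots,0,a_1,a_2,\ldots)$.

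It remains to produce an $\A^1$-homotopy $\iota_m\simeq_{\A^1}\id_{U_\infty}$; by iteration it suffices to treat $m=1$, where the standard interpolation
\[
H(t,(a_1,a_2,a_3,\ldots))=\bigl((1-t)a_1,\ ta_1+(1-t)a_2,\ ta_2+(1-t)a_3,\ \ldots\bigr)
\]
runs from $\id$ at $t=0$ to the right shift at $t=1$. The main technical obstacle, and the only step not reducing to a formal manipulation, is to verify that this $H$ restricts to a map $\A^1\times U_\infty\to U_\infty$, i.e., that the output sequence generates $p_*\sO_Z\otimes_{\sO_S}\sO_T[t]$ as an $\sO_T[t]$-algebra whenever $\tau$ is generating. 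By Nakayama applied to the cokernel of the induced algebra homomorphism $\sO_T[t][x_1,x_2,\ldots]\to p_*\sO_Z\otimes_{\sO_S}\sO_T[t]$---a finitely generated $\sO_T[t]$-module---this reduces to a fiberwise check over each residue field $\kappa$ at a closed point of $\Spec\sO_T[t]$ with specialization $\bar t\in\kappa$: either $1-\bar t\in\kappa^\times$, in which case the $a_i$ are recovered from the output by a triangular linear system with unit diagonal, or $\bar t=1$, in which case the output is the literal right-shift of a generating sequence and spans the same subalgebra. Splicing $H_1$, the reverse of $H_2$, and $m$ iterates of $H$ then assembles the required $\A^1$-homotopy.
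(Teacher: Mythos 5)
Your argument is correct, but it takes a genuinely different (and much longer) route than the paper. In the paper, $\Hilb^{\Gor,\o}(\A^\infty)$ is by definition the pullback $\Hilb(\A^\infty)\times_{\FFlat}\FGor^\o$, so the forgetful map to $\FGor^\o$ is a base change of $\Hilb(\A^\infty)\to\FFlat$; since that map is a universal $\A^1$-equivalence on affine schemes by \cite[Proposition~4.2]{robbery} and universal $\A^1$-equivalences are by definition stable under base change, the proposition follows in one line. You instead re-prove the content of the cited result directly: you note that after pulling back along an affine $S\to\FGor^\o$ the orientation data cancels out and the fiber is the ind-scheme of algebra-generating sequences for the finite locally free algebra $p_*\sO_Z$, and you contract it onto a section (a chosen finite generating tuple, which exists since $S$ is affine) via explicit shift-type $\A^1$-homotopies, verifying that the interpolated tuples still generate by a Nakayama/fiberwise argument. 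This is essentially the mechanism behind the result in \emph{loc.\ cit.}, so nothing in your approach is off-track; it has the merit of being self-contained and of making explicit why the orientation plays no role in the fibers, whereas the paper's proof buys brevity by exploiting the definition of the oriented Hilbert scheme as a pullback. One small point you should make explicit: universality requires handling base change along an arbitrary map of presheaves $\sF\to\FGor^\o$, not just along affine representables; this follows from your case by writing $\sF$ (restricted to affine schemes) as a colimit of representables and using that pullbacks of presheaves commute with colimits and that $\A^1$-equivalences are closed under colimits, but as written your proposal only treats the representable case.
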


\begin{proof}
	By definition, this map is a base change of the forgetful map $\Hilb(\A^\infty)\to \FFlat$, which is a universal $\A^1$-equivalence on affine schemes by~\cite[Proposition~4.2]{robbery}.
\end{proof}

Consider the map $\sigma\colon\Hilb^{\Gor,\o}(\A^\infty)\to \Hilb^{\Gor,\o}(\A^\infty)$ sending an oriented Gorenstein subscheme $Z\subset \A^\infty$ to 
\[(\Spec \Oo[x]/x^2\times\{0\})\sqcup (\{0\}\times Z)\subset \A^1\times \A^\infty,\]
where $\Spec \Oo[x]/x^2$ is equipped with the orientation of Example~\ref{ex:kx/x2} and is embedded in $\A^1$ at $1$. Then we define
\[\Hilb^{\Gor,\o}_\infty(\A^\infty) := \colim(\Hilb^{\Gor,\o}_2(\A^\infty)\xrightarrow{\sigma}\Hilb^{\Gor,\o}_4(\A^\infty)\xrightarrow{\sigma}\cdots).\]

\begin{cor}\label{cor:Hilb-FGor}
    The forgetful maps
	 \[
	 \uZ\times \Hilb^{\Gor,\o}_\infty(\A^\infty) \to \FGor^{\o,\gp} \to \Vect^{\sym,\gp}
	 \]
    are $\A^1$-equivalences on affine schemes.
\end{cor}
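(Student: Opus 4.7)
Here's my plan.

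The second arrow is the group completion of the $\Einfty$-semiring morphism $\eta\colon \FGor^\o \to \Vect^\sym$, so it is an $\A^1$-equivalence by Theorem~\ref{thm:main-gp} (which in fact gives the statement on all schemes, not just affine ones).

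For the first arrow, I would combine Proposition~\ref{prop:Hilb-FGor} with Remark~\ref{rem:FGor-gp}. By Proposition~\ref{prop:Hilb-FGor}, the forgetful map $\Hilb^{\Gor,\o}(\A^\infty)\to \FGor^\o$ is a universal $\A^1$-equivalence on affine schemes. By inspection of the definitions, this map is compatible with the stabilization $\sigma$ on each side: on the Hilbert side $\sigma$ adjoins the subscheme $\Spec\Oo[x]/x^2\times\{0\}$ with its canonical orientation from Example~\ref{ex:kx/x2}, while on the algebra side $\sigma\colon \FGor^\o\to\FGor^\o$ adds a copy of $(\Z[x]/x^2,\varphi_0)$, and the forgetful map intertwines the two. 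Passing to the sequential colimit then yields a universal $\A^1$-equivalence
\[\Hilb^{\Gor,\o}_\infty(\A^\infty) \;=\; \colim_n \Hilb^{\Gor,\o}_{2n}(\A^\infty) \;\to\; \colim_n \FGor^\o_{2n}\]
on affine schemes, which is preserved by the product with $\uZ$.

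Next, I would identify $\uZ\times\colim_n \FGor^\o_{2n}$ with the telescope $\FGor^{\o,\st}=\colim_n \FGor^\o$ up to $\A^1$-equivalence on affines. Using the coproduct decomposition $\FGor^\o=\coprod_d \FGor^\o_d$ together with the fact that $\sigma$ shifts the degree by $2$, the telescope $\FGor^{\o,\st}$ decomposes by virtual degree $d-2n\in\Z$, producing the asserted identification (the $\uZ$ factor indexing virtual degree shifts). Finally, Remark~\ref{rem:FGor-gp} replaces $\FGor^{\o,\st}$ with $\FGor^{\o,\gp}$ on affine schemes, and composing gives the first map as an $\A^1$-equivalence on affine schemes.

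The main obstacle I anticipate is the identification $\FGor^{\o,\st}\simeq \uZ\times \colim_n \FGor^\o_{2n}$ in the last step. Since $\sigma$ raises the degree by $2$, the telescope a priori splits according to parity of the degree, and only the even-degree component is directly parametrized by the Hilbert side; one must verify that after $\A^1$-localization the $\uZ$ factor together with the $\A^1$-invertibility of the hyperbolic shift (Lemma~\ref{lem:invert-hyp}(1), transported across Theorem~\ref{thm:main-gp}) correctly captures the whole of $\FGor^{\o,\gp}$. This is the same mechanism by which stabilization agrees with group completion in Remark~\ref{rem:FGor-gp}, so once that remark is in hand the identification should follow by a formal colimit argument.
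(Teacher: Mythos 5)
Your proposal is correct and takes essentially the same route as the paper: both arguments handle the second map by Theorem~\ref{thm:main-gp} and factor the first map through the telescope, combining Proposition~\ref{prop:Hilb-FGor} levelwise (the forgetful map intertwines the two stabilizations $\sigma$) with Remark~\ref{rem:FGor-gp} to pass from $\FGor^{\o,\st}$ to $\FGor^{\o,\gp}$ on affine schemes. The only difference is bookkeeping: the paper takes the colimit of the \emph{ungraded} maps $\Hilb^{\Gor,\o}(\A^\infty)\to\FGor^\o$ under $\sigma$ and identifies $\uZ\times\Hilb^{\Gor,\o}_\infty(\A^\infty)$ with that telescope on quasi-compact schemes, so the virtual-degree/parity point you flag (and resolve via grouplikeness and the $\A^1$-invertibility of the hyperbolic shift) is absorbed into that identification rather than treated as a separate comparison of $\uZ\times\colim_n\FGor^\o_{2n}$ with $\FGor^{\o,\st}$.
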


\begin{proof}
	The second map is an $\A^1$-equivalence on all schemes by Theorem~\ref{thm:main-gp}.
	The first map factors as
	\[
	\uZ\times \Hilb^{\Gor,\o}_\infty(\A^\infty) \to \FGor^{\o,\st} \to \FGor^{\o,\gp},
	\]
	where the second map is an $\A^1$-equivalence on affine schemes by Remark~\ref{rem:FGor-gp}.
	On quasi-compact schemes, the presheaf $\uZ\times \Hilb^{\Gor,\o}_\infty(\A^\infty)$ is the colimit of
	\[\Hilb^{\Gor,\o}(\A^\infty)\xrightarrow{\sigma}\Hilb^{\Gor,\o}(\A^\infty)\xrightarrow{\sigma}\cdots,\]
	while $\FGor^{\o,\st}$ is the colimit of
	\[\FGor^\o\xrightarrow{\sigma}\FGor^\o\xrightarrow{\sigma}\cdots.\]
	Hence, the result follows from Proposition~\ref{prop:Hilb-FGor}.
\end{proof}

Let $(V,B)\in \Vect^\sym(S)$ be a vector bundle over $S$ with a non-degenerate symmetric bilinear form. Recall from \cite{SchlichtingTripathi} that  the \emph{orthogonal Grassmannian} $\GrO_d(V,B)$ is the open subscheme of the Grassmannian $\Gr_d(V)$ given by the rank $d$ subbundles $W\subset V$ such that the restriction of $B$ to $W$ is non-degenerate. We consider the smooth ind-scheme
\[\GrO_d=\GrO_d(\Hyp^\infty):=\colim_n \GrO_d(\Hyp^n)\]
over $\Spec\Z$. Forgetting the embedding into $\Hyp^\infty$ defines a canonical map $\GrO_d\to \Vect^\sym_d$.

A bilinear form $B$ on an $R$-module $V$ is called \emph{even} if it is symmetric and $B(x,x)\in 2R$ for all $x\in V$. Clearly, if $2\in R^\times$, every symmetric form is even. If $V$ is projective, a symmetric bilinear form $B$ on $V$ is even if and only if there exists a bilinear form $B'$ on $V$ such that $B(x,y)=B'(x,y)+B'(y,x)$.

\begin{lem}\label{lem:Hyp-embedding}
	Let $R$ be a ring, $V$ a finite locally free $R$-module, and $B$ an even symmetric bilinear form on $V$.
	Then there exists an isometric embedding $V\hookrightarrow \Hyp(R^n)$ for some $n\geq 0$.
\end{lem}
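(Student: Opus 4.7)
The plan is to build the embedding in two steps: first realize $V$ isometrically inside $\Hyp(V)=V\oplus V^\vee$, then extend $V$ to a free module and embed $\Hyp(V)$ as an orthogonal summand of $\Hyp(R^n)$.

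The crucial input is the hypothesis that $B$ is even, which, since $V$ is finite projective, guarantees the existence of an auxiliary bilinear form $B'\colon V\otimes V\to R$ with
\[B(x,y)=B'(x,y)+B'(y,x).\]
(Locally on $R$, pick a basis, define $B'$ by restricting $B$ to a strict upper triangular part and taking half of the diagonal using the even condition; then the decomposition can be glued, or one can simply note that evenness is exactly the statement that $B$ lies in the image of the symmetrization map $\on{Bil}(V)\to \on{Sym}(V)$ and this map is split surjective for $V$ projective.)

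Given $B'$, I would define the map $\iota\colon V\to V\oplus V^\vee$ by
\[\iota(v)=(v,\,B'(v,-)).\]
With respect to the hyperbolic pairing $\langle (v,f),(w,g)\rangle_{\Hyp}=f(w)+g(v)$, one computes $\langle\iota(v),\iota(w)\rangle_{\Hyp}=B'(v,w)+B'(w,v)=B(v,w)$, so $\iota$ is isometric. Moreover $\iota$ is the graph of a linear map, so it is split by the first projection; in particular $\iota$ is a split monomorphism of $R$-modules.

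For the second step, choose $W$ with $V\oplus W\simeq R^n$. Then $(R^n)^\vee\simeq V^\vee\oplus W^\vee$ and the inclusion $V\oplus V^\vee\hookrightarrow R^n\oplus (R^n)^\vee$ (using the obvious summand inclusions on each factor) is isometric for the hyperbolic forms, since the cross pairing of a vector in $V$ with a functional on $R^n$ extending one on $V$ by zero on $W$ agrees with the original pairing. Composing with $\iota$ yields the desired split isometric embedding $V\hookrightarrow \Hyp(R^n)$. The only nontrivial point is the existence of $B'$; the rest is bookkeeping.
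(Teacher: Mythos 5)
Your proof is correct and is essentially the paper's argument: both rest on writing $B(x,y)=B'(x,y)+B'(y,x)$ (a fact the paper records just before the lemma for projective $V$) and then embedding via the graph of $B'$ into a hyperbolic space, the only difference being that the paper first extends $B'$ by zero along $V\oplus W\simeq R^n$ and then takes the graph $x\mapsto(Ax,x)$ in $\Hyp(R^n)$, whereas you take the graph into $\Hyp(V)$ first and stabilize afterwards. One minor caveat: the symmetrization map $\mathrm{Bil}(V)\to\mathrm{Sym}(V)$ is not split surjective onto all symmetric forms (its image is exactly the even ones); the clean justification is to extend $B$ by zero to a free module, solve there with an upper-triangular choice, and restrict $B'$ back to $V$.
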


\begin{proof}
	Choose an isomorphism $V\oplus W\simeq R^n$ and let $A$ be the matrix representing $B'\oplus 0$. Then we can take the composition
	\[
	V\hookrightarrow R^n \to \Hyp(R^n),
	\]
	where the second map has components $(A, \id_{R^n})$.
\end{proof}

Note that every hyperbolic space $\Hyp V$ is even. 
It follows that the forgetful map $\GrO_d\to\Vect^\sym_d$ lands in the subpresheaf $\Vect^\ev_d\subset \Vect^\sym_d$ of even forms.

\begin{prop}\label{prop:Vect^bil and Gr^o}
    For any $d\ge 0$, the forgetful map
    \[\GrO_d\to \Vect^\ev_d\]
    is a universal $\A^1$-equivalence on affine schemes.
\end{prop}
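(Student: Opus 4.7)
The natural approach is to show that the fibers of $\GrO_d \to \Vect^\ev_d$ over affine schemes are $\A^1$-contractible. Given $(V,B) \in \Vect^\ev_d(R)$ for an affine test scheme $\Spec R$, the homotopy fiber is the presheaf
\[
F\colon S \mapsto \{\text{isometric embeddings } V_S \hookrightarrow \Hyp^\infty_S\}
\]
on $R$-algebras. The ``direct summand'' condition is automatic, since an isometric embedding of a non-degenerate form into a non-degenerate form splits off via the orthogonal complement. Thus the proposition reduces to showing that $F$ is $\A^1$-contractible on affine schemes.

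Non-emptiness of $F$ is exactly Lemma~\ref{lem:Hyp-embedding}, which crucially uses evenness of $B$. For $\A^1$-contractibility, I would use the Lagrangian decomposition $\Hyp^\infty = R^\infty \oplus R^{\infty *}$ to parameterize isometric embeddings as pairs $\iota = (\alpha,\beta)$ with $\alpha\colon V\to R^\infty$ and $\beta\colon V\to R^{\infty *}$; the isometry condition reads $\alpha^*\beta + (\alpha^*\beta)^T = B$ as bilinear forms on $V$. After a stabilization step that arranges $\alpha$ to be a summand embedding, the projection $\iota \mapsto \alpha$ exhibits $F$ as an affine bundle over the classical Stiefel presheaf of summand embeddings $V\hookrightarrow R^\infty$. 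The base is $\A^1$-contractible on affine schemes by the standard elementary-matrix argument (the same used in the linear analogue of~\cite[Proposition~4.2]{robbery}), and the fibers are non-empty affine spaces: for fixed $\alpha$, the admissible $\beta$'s form a torsor under the $R$-module of alternating bilinear forms on $V$ pulled back through $\alpha^\vee$, non-empty precisely because $B$ is even (so $B = B' + (B')^T$ for some $B'$, providing a basepoint $\beta$).

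The main obstacle is making the stabilization step precise: an arbitrary $\iota \in F(S)$ need not have $\alpha$-component a summand embedding, so one must produce an explicit $\A^1$-deformation to one where it is. One way to handle this is to first deform $\iota$ by enlarging the ambient $\Hyp^{\infty}$ and adding the ``standard'' embedding from Lemma~\ref{lem:Hyp-embedding} in disjoint coordinates, then interpolating so that the $\alpha$-component inherits the summand property from the standard one. An alternative, possibly cleaner route is to identify $F$ as a pseudo-torsor under the ind-group $O(\Hyp^\infty)/\Stab(V,B)$ and prove $\A^1$-contractibility of the infinite orthogonal group directly, using the orthogonal analogue of the elementary-transvection swap homotopy (whose existence again hinges on evenness, controlling the relevant split summands).
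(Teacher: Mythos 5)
Your reduction to $\A^1$-contractibility of the fiber presheaf $F$ of isometric embeddings $V_S\hookrightarrow\Hyp^\infty_S$ is the right formal move, and you correctly locate the role of evenness (via Lemma~\ref{lem:Hyp-embedding}), but the plan as written has a real gap precisely where you flag it yourself. The straight-line-homotopy trick from the Stiefel/linear case (interpolating through a disjoint copy) does not transplant to the orthogonal setting: for $\iota,\kappa\colon V\hook\Hyp^\infty$ isometric, the stabilized families $(\iota,t\kappa)$ or $((1-t)\iota,\kappa)$ scale $B$ by $1+t^2$ or $(1-t)^2+1$, so they are not isometric. Your ``stabilization step'' to arrange the $\alpha$-component (with respect to the Lagrangian splitting $\Hyp^\infty=R^\infty\oplus R^{\infty\ast}$) to be a summand embedding is the genuine content of the argument and is not carried out; without it the fibration over the Stiefel presheaf is not set up. Similarly, your alternative route (pseudo-torsor over $O(\Hyp^\infty)/\Stab(V,B)$) would require not just $\A^1$-contractibility of the infinite orthogonal group but triviality of the torsor, which is not automatic.

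The paper's proof takes a different and in the end more economical route: rather than trying to prove $\A^1$-contractibility of fibers directly, it verifies the lifting criterion of \cite[Lemma~4.1(2)]{robbery} against the inclusions $\partial\A^n_{R_0}\hook\A^n_{R_0}$. The lifting problem is: given $(V,B)$ even over $R$ and an isometric embedding $F\colon V/IV\hook\Hyp(R/I)^N$, produce an isometric lift $\tilde F\colon V\hook\Hyp(R)^M$ for some $M\ge N$. One first lifts arbitrarily, measures the defect $\tilde B(x,y)=B(x,y)-B_{\Hyp}(\tilde Fx,\tilde Fy)$ (an $I$-valued even form), uses $I\cap 2R=2I$ and non-degeneracy of $B$ to correct $\tilde F$ so the defect takes values in $I^2$, and then uses $I^2\cap 2R=2I^2$ to write the defect as $\sum_i a_ib_iC_i$ with $a_i,b_i\in I$ and $C_i$ even; Lemma~\ref{lem:Hyp-embedding} then provides embeddings absorbing the $C_i$, yielding the isometric lift. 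The evenness hypothesis enters twice (to control divisibility by $2$ at the $I$- and $I^2$-level), not only in the non-emptiness as your sketch suggests. There is also a technical point your proposal does not engage with: $\Vect^\ev_d$ for $d\ge 2$ fails closed gluing (the paper exhibits an explicit counterexample), so one cannot invoke \cite[Lemma~4.1(2)]{robbery} verbatim but only its proof, after checking separately that $\Vect^\ev_d$ does send the relevant colimit $\partial\A^n_R$ to a limit (which holds because $\Spec R\mapsto 2R$ does).
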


\begin{proof}
    We will apply \cite[Lemma~4.1(2)]{robbery}, or rather its proof. 
	 Unlike $\Vect^\sym_d$, the presheaf $\Vect^\ev_d$ for $d\geq 2$ does not satisfy closed gluing (in particular, it is not an algebraic stack). 
	 Indeed, consider a closed pushout of affine schemes $X\sqcup_ZY$ on which there is a noneven function $f$ whose restriction to both $X$ and $Y$ is even (for example: $X=Y=\Spec\Z[x,y]$, $Z=\Spec\Z[x,y]/(2x-2y)$, and $f=(2x,2y)$); then the non-degenerate symmetric bilinear form $\left(\begin{smallmatrix} f & 1\\ 1 & 0\end{smallmatrix}\right)$ is not even, even though its restriction to both $X$ and $Y$ is. 
	 However, $\Vect^\ev_d$ does nevertheless transform the colimit $\partial \A^n_R=\colim_{\Delta^k\hook \partial\Delta^n} \A^k_R$ into a limit, since the presheaf $\Spec R\mapsto 2R$ does.
	 It therefore suffices to check that for every commutative square
    \[\begin{tikzcd}
        \Spec R/I \ar[r] \ar[d] & \GrO_d\ar[d]\\
        \Spec R\ar[r]\ar[ur,dashed] & \Vect^\ev_d
    \end{tikzcd}\]
    with $R=R_0[t_0,\dotsc,t_n]/(\sum_i t_i-1)$ and $I=(t_0\dotsm t_n)$, there exists a diagonal arrow making both triangles commute. Unwrapping the definitions of $\Vect^\ev_d$ and $\GrO_d$, this means that for every even bilinear space $(V,B)$ over $R$ and every isometric embedding $F\colon V/IV\hook \Hyp(R/I)^N$, we must find an isometric embedding $\tilde F\colon V\hook \Hyp(R)^N$ lifting $F$, after possibly increasing $N$.
    
    First, let $\tilde F\colon V\to \Hyp(R)^N$ be any lift of $F$ (possibly not isometric), which exists since $V$ is projective over $R$. If we write
    \[\tilde B(x,y)\coloneqq B(x,y)-B_{\Hyp}\left(\tilde Fx, \tilde Fy\right),\]
    then $\tilde B$ is an $I$-valued even form on $V$. Our first claim is that we can change $\tilde F$ so that $\tilde B$ has values in $I^2$. 
	 To do so, note that since $I\cap 2R=2I$, every $I$-valued even form on $V$ is an $I$-linear combination of even forms. Hence, we can find an $I$-valued bilinear form $\tilde B'$ such that $\tilde B(x,y)=\tilde B'(x,y)+\tilde B'(y,x)$.
	 Since $B$ is non-degenerate, there exists $r\colon V\to IV$ such that
     \[\tilde B'(x,y)=B(x,r(y))\,.\]
	Then it is easy to check that replacing $\tilde F$ with $\tilde F + \tilde Fr$ has the desired effect.
    
    Without loss of generality, we can therefore assume that $\tilde B$ takes values in $I^2$. Since $I^2\cap 2R=2I^2$, we can then write
    \[\tilde B=\sum_i a_i b_i C_i\]
    for some $a_i, b_i\in I$ and some even bilinear forms $C_i\colon V\times V\to R$. 
	 Using Lemma~\ref{lem:Hyp-embedding}, we can find for each $i$ an $R$-linear map $G_i\colon V\to \Hyp(R)^{N_i}$ such that 
    \[C_i (x,y)=B_{\Hyp}(G_ix, G_iy)\,.\]
	 Then an isometric lift of $F$ is given by
    \[\left(\tilde F,\left((a_i,b_i)\circ G_i\right)_i\right)\colon V\to \Hyp(R)^{N+\sum_i N_i}\,,\]
    where $(a_i,b_i)\colon\Hyp(R)^{N_i}\to \Hyp(R)^{N_i}$ is the map given on each component by the matrix $\left(\begin{smallmatrix} a_i & 0\\ 0 & b_i\end{smallmatrix}\right)$.
\end{proof}

Let $\tau\colon\GrO_d\to \GrO_{d+2}$ be the map sending $V\subset \Hyp^\infty$ to $\Hyp \oplus V\subset \Hyp^{1+\infty}$, and define the infinite orthogonal Grassmannian as
\[\GrO_\infty\coloneqq \colim(\GrO_2\xrightarrow\tau \GrO_4\xrightarrow\tau \cdots)\,.\]

Using Proposition~\ref{prop:Vect^bil and Gr^o} we obtain the following result, which generalizes a theorem of Schlichting and Tripathi \cite[Theorem~5.2]{SchlichtingTripathi} to rings that are not necessarily regular nor $\Z[\tfrac 12]$-algebras.

\begin{cor}\label{cor: Vect^bil and GrO}
   The forgetful map
 \[
 \uZ\times \GrO_\infty \to \Vect^{\ev,\gp}
 \]
   is an $\A^1$-equivalence on affine schemes.
\end{cor}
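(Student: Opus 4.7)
My plan is to follow the same structure as the proof of Corollary~\ref{cor:Hilb-FGor}, substituting the orthogonal-Grassmannian/even-symmetric side for the Hilbert-scheme/oriented-Gorenstein side. Concretely, I would introduce the telescope
\[
\Vect^{\ev,\st} := \colim\bigl(\Vect^\ev \xrightarrow{\tau} \Vect^\ev \xrightarrow{\tau} \cdots\bigr)
\]
under the stabilization map adding $\Hyp$, and factor the forgetful map as
\[
\uZ\times \GrO_\infty \;\longrightarrow\; \Vect^{\ev,\st} \;\longrightarrow\; \Vect^{\ev,\gp},
\]
then show each arrow is an $\A^1$-equivalence on affine schemes.

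For the first arrow, I would argue as follows. On quasi-compact schemes, $\uZ\times \GrO_\infty$ is naturally identified with the telescope
\[
\colim\Bigl(\coprod_{d\ge 0}\GrO_d \xrightarrow{\tau} \coprod_{d\ge 0}\GrO_d \xrightarrow{\tau} \cdots\Bigr),
\]
where the $\uZ$ factor emerges from the shift invariant $d-2j\in\Z$ that parametrizes the path components of the telescope. The same construction applied to $\coprod_d \Vect^\ev_d$ produces $\Vect^{\ev,\st}$. Since each forgetful map $\GrO_d\to \Vect^\ev_d$ is a universal $\A^1$-equivalence on affine schemes by Proposition~\ref{prop:Vect^bil and Gr^o}, and $\A^1$-equivalences on affine schemes are closed under filtered colimits of presheaves, the induced map between the two telescopes is an $\A^1$-equivalence on affine schemes.

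For the second arrow, I would prove the even analogue of Lemma~\ref{lem:invert-hyp} by transferring the argument there into $\Vect^\ev$. Part (1) requires no change: the cyclic permutation of $\Hyp^{\oplus 3}$ is the image under $\Hyp\colon\Vect\to\Vect^\ev$ of the cyclic permutation of $\Z^3$, and the $\A^1$-homotopy to the identity (via elementary matrices) stays in $\Hyp(\Vect)\subset \Vect^\ev$. For part (2), given $(V,b)\in\Vect^\ev(R)$ the sum $(V,b)\oplus(V,-b)$ is still even with the diagonal Lagrangian, and the matrix $\left(\begin{smallmatrix}0 & I\\ I & tA\end{smallmatrix}\right)$ used in the proof of Lemma~\ref{lem:metabolic=hyperbolic} remains even whenever $A$ is, which is automatic since the metabolic form must itself be even. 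Invertibility of $\Hyp V$ then follows exactly as before from $\Hyp V\oplus \Hyp W=(\Hyp R)^{\oplus n}$. Combining these with \cite[Proposition 5.1]{deloop4} as in Lemma~\ref{lem:invert-hyp}(3), and using that $\Lhtp$ commutes with group completion, gives the second arrow.

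The only real obstacle is the routine parity bookkeeping in part (2): checking that each construction used for $\Vect^\sym$ stays inside the subpresheaf $\Vect^\ev$. In every case this reduces to the two observations that $\Hyp$ lands in $\Vect^\ev$ and that rescaling a bilinear form by $t\in R[t]$ preserves evenness, so no genuinely new argument is required beyond the proofs already present in Section~\ref{sec:gp}.
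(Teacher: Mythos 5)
Your proposal matches the paper's proof essentially step for step: the same factorization through $\Vect^{\ev,\st}$, the same identification of $\uZ\times\GrO_\infty$ with the telescope of $\coprod_d\GrO_d$ on quasi-compact schemes combined with Proposition~\ref{prop:Vect^bil and Gr^o}, and the same appeal to Lemma~\ref{lem:invert-hyp} with $\Vect^\sym$ replaced by $\Vect^\ev$ (the paper asserts this replacement parenthetically, and your parity checks correctly justify it). No issues.
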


\begin{proof}
	This map factors as
	\[
	\uZ\times \GrO_\infty \to \Vect^{\ev,\st} \to \Vect^{\ev,\gp},
	\]
	where the second map is an $\A^1$-equivalence on affine schemes by Lemma~\ref{lem:invert-hyp} (in which we may replace $\Vect^\sym$ by $\Vect^\ev$).
	On quasi-compact schemes, the presheaf $\uZ\times \GrO_\infty$ is the colimit of
	\[\coprod_{d\ge0}\GrO_d\xrightarrow{\tau}\coprod_{d\ge0}\GrO_d\xrightarrow{\tau}\cdots,\]
	where the coproducts are taken in $\Pre_\Sigma(\Sch)$.
	On the other hand, $\Vect^{\ev,\st}$ is the colimit of
	\[\Vect^\ev\xrightarrow{\tau}\Vect^\ev\xrightarrow{\tau}\cdots.\]
	Hence, the result follows from Proposition~\ref{prop:Vect^bil and Gr^o}.
\end{proof}

\begin{rem}
	The spaces $\Vect^\sym(R)^\gp$ and $\Vect^\ev(R)^\gp$ appearing in Corollaries \ref{cor:Hilb-FGor} and~\ref{cor: Vect^bil and GrO} are equivalent to the ``genuine symmetric'' and ``genuine even'' Grothendieck--Witt spaces $\GWspace^\mathrm{gs}(R)$ and $\GWspace^\mathrm{ge}(R)$ defined in \cite[Section 4]{HermitianII}; this is proved in \cite{HebestreitSteimle}. 
	We note that the forgetful map $\GWspace^\mathrm{ge}\to\GWspace^\mathrm{gs}$ does not induce an equivalence in $\H(S)$ if $2$ is not invertible on $S$. Both motivic spaces are stable under base change (by Corollary~\ref{cor: Vect^bil and GrO} and \cite[Example A.0.6(5)]{deloop3}, respectively), so it suffices to consider a field $k$ of characteristic $2$. Then every even form over $k$ is alternating (by definition), and non-degenerate alternating forms have even rank \cite[I, Corollary 3.5]{HM}, so the composition
	\[
	\pi_0\GWspace^\mathrm{ge}(k)\twoheadrightarrow\pi_0(L_\mot\GWspace^\mathrm{ge})(k)\to \pi_0(L_\mot\GWspace^\mathrm{gs})(k)\stackrel{\rk}\twoheadlongrightarrow \Z
	\]
	has image $2\Z\subset\Z$.
\end{rem}

\section{Isotropic Gorenstein algebras}
\label{sec:isotropic}

Inspired by~\cite[Theorem~2.1]{robbery}, one is tempted to ask whether the map
\[\alpha\colon\Vect^\sym_{d-2}\to \FGor^\o_d\]
sending $(V,B)$ to the oriented Gorenstein algebra $(R[x]/x^2\oplus V,\phi)$ of Example~\ref{ex:Gorenstein-algebras-from-bilinear-forms} is an $\A^1$-equivalence. Unfortunately this turns out not to be the case.

\begin{prop}\label{prop: square zero ext over R}
 For any formally real field $k$ and any $d\geq 2$, the map
 \[\alpha\colon\Vect^\sym_{d-2}\to \FGor^\o_d\]
 is not a motivic equivalence on smooth $k$-schemes.   
\end{prop}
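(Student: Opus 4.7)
The plan is to derive a contradiction by comparing signatures at an ordering of $k$. Fix an ordering $\sigma$ of $k$ and let $R$ denote its real closure. The key observation is that $\eta \circ \alpha \colon \Vect^\sym_{d-2} \to \Vect^\sym_d$ sends $(V, B)$ to the orthogonal sum $(R[x]/x^2 \oplus V,\, B_\Hyp \oplus B)$ by Example~\ref{ex:Gorenstein-algebras-from-bilinear-forms}, so its image bilinear forms all have signature (at $\sigma$) equal to $\text{sig}_\sigma(B) \in \{-(d-2), -(d-4), \ldots, d-2\}$, since $B_\Hyp$ is hyperbolic and $(V,B)$ has rank $d-2$.

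First I would verify that the Sylvester signature at $\sigma$ descends to a well-defined map $\text{sig}_\sigma \colon \pi_0 \Lmot \Vect^\sym_d(\Spec k) \to \Z$. Since $\Vect^\sym_d$ is an algebraic stack, it is already a Nisnevich sheaf, so the only nontrivial identifications come from $\A^1$-homotopy. A non-degenerate symmetric bilinear form over $k[t]$ has Gram determinant in $k^\times$, so it remains non-degenerate after base change to $R[t]$ at every specialization $t_0 \in R$. The signature at $t_0$ is then a locally constant function of $t_0 \in R$ by continuity of eigenvalues of real symmetric matrices; since $R$ is connected in its order topology (it is a real closed field), the signature is constant. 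In particular, it agrees at $t = 0$ and $t = 1$.

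Next I would exhibit an oriented Gorenstein $k$-algebra of degree $d$ whose underlying form has signature $d$, larger than any signature reached by $\eta \circ \alpha$. Take $A := k^d$ with orientation $\varphi(a_1, \ldots, a_d) := \sum_i a_i$. Then $A$ is étale over $k$ (hence Gorenstein), and the associated form is $B_\varphi(a, b) = \sum a_i b_i = \langle 1, 1, \ldots, 1\rangle$, with $\text{sig}_\sigma = d$. Its class in $\pi_0 \Lmot \FGor^\o_d(\Spec k)$ therefore maps to a class in $\pi_0 \Lmot \Vect^\sym_d(\Spec k)$ of signature $d > d-2$, which by the bound above cannot lie in the image of $\pi_0 \Lmot (\eta \circ \alpha)(\Spec k)$. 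If $\alpha$ were a motivic equivalence, then $\pi_0 \Lmot \alpha(\Spec k)$ would be a bijection and the images of $\pi_0 \Lmot \eta$ and $\pi_0 \Lmot (\eta \circ \alpha)$ at $\Spec k$ would coincide, a contradiction.

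The main point requiring care is the $\A^1$-invariance of $\text{sig}_\sigma$ together with its compatibility with the full motivic localization (not just naive $\A^1$-equivalence classes). This is handled by the continuity argument over $R$ combined with the fact that $\Vect^\sym_d$ is already a Nisnevich sheaf, so no further identifications arise at the point $\Spec k$. Everything else reduces to the elementary observation that orthogonal summing with a hyperbolic plane does not change the signature.
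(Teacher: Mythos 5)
There is a genuine gap at the step you yourself flag as the delicate one: the claim that $\mathrm{sig}_\sigma$ descends to $\pi_0\bigl(L_\mot\Vect^\sym_d\bigr)(\Spec k)$ because ``$\Vect^\sym_d$ is already a Nisnevich sheaf, so no further identifications arise at the point $\Spec k$'' beyond naive $\A^1$-homotopy. This is not a valid principle. The motivic localization $L_\mot$ is an iterated (in general transfinite) composition of the naive $\A^1$-singular construction and Nisnevich sheafification; while sheafification does not change sections over $\Spec k$ (which is a stalk of the site), it does change sections over $\A^1_k$ and over the other smooth schemes that enter at later stages. Consequently the homotopies producing identifications in $\pi_0\bigl(L_\mot\Vect^\sym_d\bigr)(k)$ need only exist Nisnevich-locally on $\A^1_k$ (iteratively), and in general $\pi_0(L_\mot\sX)(k)$ is a further quotient of the set of naive $\A^1$-homotopy classes of $\sX(k)$: the result you implicitly invoke, \cite[\sectsign 2, Corollary 3.22]{MV}, gives only the surjection $\pi_0\sX(k)\twoheadrightarrow\pi_0(L_\mot\sX)(k)$, not that the kernel relation is generated by elementary homotopies over $k$. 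Your continuity argument therefore only shows invariance of the signature under naive homotopies, which is not enough to define the invariant on motivic $\pi_0$ — and without such an invariant the final contradiction does not go through (both your bound on the image of $\pi_0L_\mot(\eta\circ\alpha)$ and the statement ``the class of $k^d$ has signature $d$'' presuppose it). The paper closes exactly this gap by routing the signature through the unramified Witt sheaf $\uW$: since $\uW$ is an $\A^1$-invariant Nisnevich sheaf (a nontrivial input, resting on homotopy invariance of Witt groups of regular $\Z[\tfrac12]$-schemes and framed transfers, \cite[Theorem 3.4.11]{deloop1}), the composite $\FGor^\o_d\to\Vect^\sym_d\to\uW\to\Z$ factors through $L_\mot$ for formal reasons, and only the easy surjectivity from \cite{MV} is then needed. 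To repair your argument you would need this (or an equivalent) factorization; the elementary observations about hyperbolic planes and the algebra $k^d$ with $\varphi=\sum$ are the same as in the paper and are fine.

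A secondary, fixable error: a real closed field need not be connected in its order topology (non-archimedean real closed fields are totally disconnected), so the eigenvalue-continuity argument as stated does not apply to the real closure of a general formally real field. The conclusion you want — constancy of the signature of a non-degenerate symmetric form over $R[t]$ — is still true, e.g.\ by Harder's theorem $W(R)\simeq W(R[t])$ or by the Tarski transfer principle from $\bR$, and this is in any case subsumed by the Witt-sheaf argument above.
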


\begin{proof}
 Let us consider the composition
 \[\FGor^\o_d\to \Vect^\sym_d\to \uW\,,\]
 where $\uW$ is the Zariski sheafification of the presheaf of Witt groups on smooth $k$-schemes.
 It is well-known that $\uW$ is an $\A^1$-invariant Nisnevich sheaf, for example by \cite[Theorem 3.4.11]{deloop1}, since Witt groups have framed transfers and are $\A^1$-invariant on regular noetherian $\Z[\tfrac 12]$-schemes (see the discussion in the introduction of \cite{GilleWittGroups}). Hence, this composition factors through a map
 \[\pi_0\left(L_\mot\FGor^\o_d\right)\to \uW\,.\]
 Let us now consider the composition
 \[\pi_0\left(L_\mot\Vect^\sym_{d-2}\right)(k)\xrightarrow\alpha \pi_0\left(L_\mot\FGor^\o_d\right)(k)\to \uW(k)\to \Z\,,\]
 where the last map is the signature associated with some real closure of $k$. We claim that the image of this composition is contained in the subset $\{n\in\Z\mid -d+2\le n\le d-2\}$. Indeed, $\pi_0\left(L_\mot\Vect^\sym_{d-2}\right)(k)$ is a quotient of $\pi_0\Vect^\sym_{d-2}(k)$ \cite[\sectsign 2, Corollary 3.22]{MV}, so it suffices to consider the image of the latter. By definition of $\alpha$, the composition 
 \[
 \Vect^\sym_{d-2}\xrightarrow{\alpha}\FGor^\o_d\to \Vect^\sym_d
 \]
adds a hyperbolic form, and in particular does not change the signature, which is therefore bounded by the rank $d-2$.
 
 Now if $\alpha$ were a motivic equivalence, this would imply that the image of
 \[\pi_0\left(L_\mot\FGor^\o_d\right)(k)\to \uW(k)\to \Z\]
 is also contained in the subset of integers of absolute value $\leq d-2$.
 But the oriented Gorenstein algebra $k^d$ with the orientation  $\varphi(x_1,\dots,x_d)=x_1+\cdots+x_d$ has signature $d$, thus providing a contradiction.
\end{proof}

\begin{rem}
    The algebra $k^d$ seems to be the only example leading to a contradiction in Proposition~\ref{prop: square zero ext over R}.
    More precisely, consider the complement $\mathcal{Z}$ of the open substack
    of smooth schemes in
    $\FGor^\o_d$. The map $\alpha$ factors through $\mathcal{Z}$, and it seems
    possible that $\alpha\colon\Vect^\sym_{d-2}\to \mathcal{Z}$ is an
    $\A^1$-equivalence.
\end{rem}

We now modify Proposition~\ref{prop: square zero ext over R} so that it becomes a positive statement.

\begin{defn}
An oriented Gorenstein $R$-algebra $(A,\varphi)$ is called \emph{isotropic} if $\varphi(1)=0$. We write $\FGor^{\o,0}_d\subset \FGor^\o_d$ for the substack of isotropic oriented Gorenstein algebras of rank $d$.
\end{defn}

\begin{rem}
	We warn the reader that the notion of isotropic oriented Gorenstein algebra is not directly related to that of isotropically augmented oriented Gorenstein algebra from Definition~\ref{def:isotropic-aug}. In particular, the forgetful functor $\FGor^{\o,+}\to\FGor^\o$ does not land in $\FGor^{\o,0}$. Nevertheless, there is a zigzag of $\A^1$-equivalences
	\[
	\FGor^{\o,0}_{\geq 2} \leftarrow \FGor^{\o,0}\times_{\FGor^\o} \FGor^{\o,+} \to \FGor^{\o,+},
	\]
	where the middle term is equivalent to $\FGor^\nonu$. Indeed, the right-hand map can be indetified with the zero section $\FGor^\nonu\hook \A^1\times\FGor^\nonu$ via Proposition~\ref{prop:nonunital=augmented}, and the left-hand map fits in a commuting triangle
	\[
	\begin{tikzcd}
		\FGor^{\o,0}_{\geq 2} \ar{dr} &[-35pt] &[-35pt] \FGor^\nonu \ar{ll} \ar{dl}{\pi} \\
		& \Vect^\sym &
	\end{tikzcd}
	\]
	where the diagonal maps are $\A^1$-equivalences (Propositions \ref{prop:vect-nu} and \ref{prop: Vect^bil and FGor}).
\end{rem}

Note that the image of the map $\alpha$ lies in the substack $\FGor^{\o,0}_d\subset \FGor^{\o}_d$. 
Moreover, for $(A,\varphi) \in \FGor^{\o,0}_d(R)$, the underlying symmetric bilinear form $(A,B_\varphi)$ is equipped with a canonical isotropic subspace $R\subset A$. Therefore we can apply algebraic surgery to it and obtain the non-degenerate symmetric bilinear form $\left(R^\perp/R,\bar B_\varphi\right)$.

The following proposition was independently obtained by Burt Totaro.\footnote{Private communication}

\begin{prop}\label{prop: Vect^bil and FGor}
 For any $d\geq 2$, the maps
 \[\alpha\colon\Vect^\sym_{d-2}\to \FGor^{\o,0}_d\]
 sending $(V,B)$ to the oriented Gorenstein algebra $(R[x]/x^2\oplus V,\phi)$ of Example~\ref{ex:Gorenstein-algebras-from-bilinear-forms} and 
 \[\FGor^{\o,0}_d\to \Vect^\sym_{d-2}\]
 sending $(A,\phi)$ to $\left(R^\perp/R,\bar B_\varphi\right)$ are inverse up to $\A^1$-homotopy.
\end{prop}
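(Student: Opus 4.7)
The composition $\beta \circ \alpha$ is the identity on the nose: for $(V,B)\in \Vect^\sym_{d-2}(R)$, the algebra $\alpha(V,B) = R[x]/x^2 \oplus V$ has form $B_\phi\bigl((r+sx,v),(r'+s'x,v')\bigr) = rs' + r's + B(v,v')$, so $(R\cdot 1)^\perp = R\oplus V$ and $(R\cdot 1)^\perp/(R\cdot 1) = V$ with induced form $B$. All the work lies in producing an $\A^1$-homotopy $\alpha\beta \simeq \id$ on $\FGor^{\o,0}_d$, and I propose to do this via a single, explicit Rees-algebra construction.

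Given $(A,\phi)\in \FGor^{\o,0}_d(R)$, set $W := \ker\phi$. Since $\phi(1)=0$ one has $R\cdot 1 \subset W = (R\cdot 1)^\perp \subset A$. A short argument using the nondegeneracy of $B_\phi$ (together with $\omega_{A/R}\simeq A$) shows that $\phi$ is surjective, so $A = W\oplus R'$; and picking $a\in A$ with $\phi(a)=1$, the map $w\mapsto \phi(aw)$ splits $R\cdot 1\hookrightarrow W$, so the three-step filtration $R\cdot 1\subset W\subset A$ is by $R$-module direct summands, with $W/R = R^\perp/R$. This filtration is multiplicative ($F_i\cdot F_j\subset F_{i+j}$, where $F_n = A$ for $n\ge 2$), and I form the Rees $R[t]$-subalgebra
\[
\mathcal{A} := R\cdot 1 \,+\, W\cdot t \,+\, A\cdot t^2\,R[t] \ \subset\ A\otimes_R R[t].
\]
Zariski-locally, choosing a basis $\{v_i\}$ of a complement $V'$ of $R$ in $W$ and a lift $c\in A$ of a generator of $A/W$ with $\phi(c)=1$, the module $\mathcal{A}$ is $R[t]$-free on $\{1,\,v_i t,\,c t^2\}$, hence finite locally free of rank $d$. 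I equip $\mathcal{A}$ with the $R[t]$-linear map
\[
\phi_{\mathcal{A}}\bigl(\textstyle\sum_n a_n t^n\bigr) := \sum_{n\ge 2}\phi(a_n)\,t^{n-2},
\]
which satisfies $\phi_{\mathcal{A}}(1)=0$. A direct computation of the Gram matrix of $B_{\phi_\mathcal{A}}$ in the local basis above shows that the determinant is constant in $t$ and equal to $\det B_\phi\in R^\times$, so $(\mathcal{A},\phi_\mathcal{A})\in \FGor^{\o,0}_d(\A^1_R)$. Specializing, the fiber at $t=1$ is canonically $(A,\phi)$, while the fiber at $t=0$ is the associated graded $R\oplus (W/R)\oplus (A/W)$ with multiplication $v\cdot v' = B_\phi(v,v')\cdot e$ (where $e\in A/W$ is the canonical generator with $\bar\phi(e)=1$), and orientation extracting the top degree---this is exactly $\alpha(R^\perp/R,\bar B_\phi) = \alpha\beta(A,\phi)$.

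Since any morphism in $\FGor^{\o,0}$ preserves the unit and the orientation and therefore preserves $W = \ker\phi$, the assignment $(A,\phi)\mapsto (\mathcal{A},\phi_\mathcal{A})$ is functorial, giving the desired natural $\A^1$-homotopy $\id \simeq \alpha\beta$. The main technical obstacle is checking that $(\mathcal{A},\phi_\mathcal{A})$ genuinely belongs to $\FGor^{\o,0}_d$ over the entire line $\A^1_R$: both the finite local freeness (which requires the local splittings of the filtration $R\cdot 1\subset W\subset A$ described above) and the constancy-in-$t$ of $\det B_{\phi_\mathcal{A}}$ must be handled carefully. The latter is a small miracle, due to the fact that the factors of $t$ appearing off-diagonal in the Gram matrix are exactly cancelled in the Laplace expansion by the factor $\phi(c)$ in position $(1,d)$, producing the same determinant $-\phi(c)^2\det B_W$ as the original $B_\phi$.
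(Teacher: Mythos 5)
Your proof is correct and follows essentially the same route as the paper: the paper also uses the Rees algebra of the filtration $R\subset R^\perp=\ker\phi\subset A$ with orientation $t^{-2}\phi$, identifies the fibers at $t=1$ and $t=0$, and checks non-degeneracy by a local Gram-matrix computation (citing \cite[Lemma~2.2]{robbery} for finite local freeness where you verify it directly with an explicit basis). The only cosmetic difference is that the paper normalizes the basis element $x$ so that $\phi(xe_i)=0$, making the off-diagonal $t$-entries vanish outright, whereas you keep them and observe that they do not affect the determinant.
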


\begin{proof}
 It is clear that the composition 
 \[\Vect^\sym_{d-2}\to \FGor^{\o,0}_d\to \Vect^\sym_{d-2}\]
 is isomorphic to the identity, as the orientation $\phi$ of the algebra $R[x]/x^2\oplus V$ satisfies $B_\phi((0,v),(0,v'))=\phi(B(v,v')x,0)=B(v,v')$ by definition. Following the proof of~\cite[Theorem~2.1]{robbery}, we will use the Rees algebra to construct an $\A^1$-homotopy of the other composition to identity. The difference with the proof in \textit{loc.\ cit.}\ is that we will use the orientation $\varphi$ to refine the filtration. Indeed, let us consider the natural filtration
 \[R\subset R^\perp\subset A\]
 of $A$. The corresponding Rees algebra is the $R[t]$-algebra given by
 \[\Rees(A,\varphi)=\{a\in A[t]\mid a_0\in R,\ \varphi(a_1)=0\}.\]
 By \cite[Lemma~2.2]{robbery}, it is a finite locally free $R[t]$-algebra such that
 \[\Rees(A,\varphi)/(t-1)\simeq A\quad\text{and}\quad \Rees(A,\varphi)/(t)\simeq R\oplus R^\perp/R\oplus A/R^\perp\,.\]
 We can equip $\Rees(A,\varphi)$ with an orientation $\tilde\varphi \colon \Rees(A,\varphi)\to R[t]$ given by $a\mapsto \frac{1}{t^2}\varphi(a)$ (note that $\varphi(a_0)=\varphi(a_1)=0$ by definition of the filtration, so $\varphi(a)$ is indeed divisible by $t^2$). We claim this gives $\Rees(A,\varphi)$ the structure of an oriented Gorenstein $R[t]$-algebra. Since this is a property local on the base, we can assume that $A$ is free as an $R$-module. Then let us choose a basis of $A$ of the form
 \[(1,e_1,\dots,e_{d-2},x)\,,\]
 where $1,e_1,\dots,e_{d-2}$ is a basis of $\ker \varphi$ and $x$ is such that $\varphi(x)=1$ and $\varphi(xe_i)=0$. Then the matrix of the bilinear form $B_\varphi$ is
 \[\begin{pmatrix} 0 & 0 & 1\\
                   0 & D & 0\\
                   1 & 0 & \varphi(x^2)\end{pmatrix},\]
 where $D$ is an invertible matrix (since $B_\varphi$ is non-degenerate). Then the collection
 \[(1,e_1t,\dots,e_{d-2}t,xt^2)\]
 is a basis for $\Rees(A,\varphi)$, and the matrix of the symmetric bilinear form $B_{\tilde \varphi}$ in this basis is
 \[\begin{pmatrix} 0 & 0 & 1\\
                   0 & D & 0\\
                   1 & 0 & \varphi(x^2)t^2\end{pmatrix},\]
which is invertible (since $D$ is). Hence, sending $(A,\varphi)$ to $(\Rees(A,\varphi),\tilde\varphi)$ defines a natural transformation
 \[\FGor^{\o,0}_d\to \FGor^{\o,0}_d(\A^1\times-)\,,\]
which provides the desired $\A^1$-homotopy. Indeed, at $t=0$ we have an isomorphism of $R$-algebras
\[
R\oplus R^\perp/R\oplus A/R^\perp \simeq R[x]/x^2\oplus R/R^\perp
\]
identifying $A/R^\perp$ with $Rx$ via $a\mapsto \phi(a)x$, under which the orientation $\tilde\phi_0=\phi\circ \pr_3$ of the left-hand side corresponds to the orientation of Example~\ref{ex:Gorenstein-algebras-from-bilinear-forms}, which extracts the coefficient of $x$.
\end{proof}

\begin{cor}
For any $d \geqslant 2$, there is a canonical $\A^1$-equivalence
\[\FGor^{\o,0}_d \simeq \GrO_{d-2}\]
on affine $\Z[\tfrac 12]$-schemes.
\end{cor}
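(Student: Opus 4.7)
The plan is to concatenate the two $\A^1$-equivalences established immediately before. By Proposition~\ref{prop: Vect^bil and FGor}, the map $\alpha\colon \Vect^\sym_{d-2} \to \FGor^{\o,0}_d$ is an $\A^1$-equivalence of presheaves (valid on all schemes, in particular on affine $\Z[\tfrac 12]$-schemes). By Proposition~\ref{prop:Vect^bil and Gr^o}, the forgetful map $\GrO_{d-2} \to \Vect^\ev_{d-2}$ is a universal $\A^1$-equivalence on affine schemes. What is still needed is a bridge between $\Vect^\sym$ and $\Vect^\ev$, and this is precisely what inverting $2$ accomplishes.

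The key observation is that on any $\Z[\tfrac 12]$-algebra $R$, every symmetric bilinear form $B$ on a finite locally free $R$-module is automatically even, since $B(x,x) = 2 \cdot \bigl(\tfrac12 B(x,x)\bigr) \in 2R$ for all $x$. Hence on affine $\Z[\tfrac 12]$-schemes the inclusion $\Vect^\ev_{d-2} \hookrightarrow \Vect^\sym_{d-2}$ is an equivalence of presheaves of groupoids, and the composition
\[
\GrO_{d-2} \longrightarrow \Vect^\ev_{d-2} = \Vect^\sym_{d-2} \xrightarrow{\ \alpha\ } \FGor^{\o,0}_d
\]
furnishes the desired canonical $\A^1$-equivalence.

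I do not expect any substantive obstacle here: the corollary is essentially a bookkeeping consequence of the two preceding structural results of this section, with the $\Z[\tfrac 12]$-hypothesis playing the minimal role of collapsing the symmetric/even dichotomy so that the two chains of $\A^1$-equivalences can be spliced together.
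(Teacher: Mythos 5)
Your argument is correct and is essentially the paper's own proof, which simply combines the same two propositions; you merely make explicit the (implicitly used) fact that over $\Z[\tfrac 12]$-schemes every symmetric bilinear form is even, so that $\Vect^\ev_{d-2}$ and $\Vect^\sym_{d-2}$ agree there and the two $\A^1$-equivalences can be composed. No gaps.
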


\begin{proof}
Combine Propositions~\ref{prop:Vect^bil and Gr^o} and~\ref{prop: Vect^bil and FGor}.
\end{proof}

\section{The motivic hermitian K-theory spectrum}\label{sec:kq}

Fix a base scheme $S$.
By a \emph{presheaf with framed transfers} on $\Sm_S$ we mean a $\Sigma$-presheaf on the $\infty$-category $\Span^\fr(\Sm_S)$ constructed in \cite{deloop1} (i.e., a presheaf that takes finite coproducts in $\Span^\fr(\Sm_S)$ to products of spaces). We refer to \cite[\sectsign 3]{deloop1} for the definition of the $\infty$-categories $\H^\fr(S)$ and $\SH^\fr(S)$ of framed motivic spaces and framed motivic spectra over $S$.
The general reconstruction theorem \cite[Theorem 18]{framed-loc} states that the ``forget transfers'' functor implements an equivalence
\[
\SH^\fr(S)\simeq \SH(S)
\]
between the symmetric monoidal $\infty$-categories of framed motivic spectra and of motivic spectra. In particular, for any $E\in\SH(S)$, the presheaf of spaces $\Omega^\infty_\T E$ has a canonical structure of framed transfers.
We will abusively denote by $\Sigma^\infty_{\T,\fr}$ the composite functor
\[
\Pre_\Sigma(\Span^\fr(\Sm_S)) \to \H^\fr(S) \xrightarrow{\Sigma^\infty_{\T,\fr}} \SH^\fr(S)\simeq \SH(S).
\]

Let $\Span^{\fgor,\o}(\Sch_S)$ be the symmetric monoidal $(2,1)$-category of oriented finite Gorenstein correspondences: its objects are $S$-schemes and its morphisms are spans
\[
\begin{tikzcd}
   & Z \ar[swap]{ld}{f}\ar{rd}{g} & \\
  X &   & Y
\end{tikzcd}
\]
where $Z\in\FGor^\o(X)$, that is, $f$ is finite locally free and equipped with a trivialization $\omega_f\simeq \sO_Z$ (see Definition~\ref{dfn:oriented-Gorenstein-algebra}). The symmetric monoidal structure is given by the product of $S$-schemes (for two finite locally free morphisms $f\colon Z\to X$ and $f'\colon Z'\to X'$ over $S$, the dualizing sheaf $\omega_{f\times_S f'}$ is the external tensor product $\omega_f\boxtimes_S \omega_{f'}$, so trivializations of $\omega_f$ and $\omega_{f'}$ induce a trivialization of $\omega_{f\times_Sf'}$). The wide subcategory where $f$ is finite syntomic is the $(2,1)$-category $\Span^{\fsyn,\o}(\Sch_S)$ of oriented finite syntomic correspondences considered in \cite[\sectsign 4.2]{deloop3}. We write $\Span^{\fgor,\o}(\Sm_S)$ for the full subcategory spanned by the smooth $S$-schemes.

The presheaf on $\Span^{\fgor,\o}(\Sch_S)$ represented by $S$ is the presheaf of groupoids $\FGor^\o$, which is therefore a commutative monoid in $\Pre_\Sigma(\Span^{\fgor,\o}(\Sch_S))$ (with respect to the Day convolution).
We observe that the presheaf of groupoids $\Vect^\sym$ can also be promoted to a commutative monoid object in $\Pre_\Sigma(\Span^{\fgor,\o}(\Sch_S))$. Given a finite Gorenstein morphism $f\colon Z\to X$ with orientation $\phi\colon f_*\sO_Z\to\sO_X$, we have a pushforward functor
\[
f_*\colon \Vect^\sym(Z) \to \Vect^\sym(X), \quad (\sE,B)\mapsto (f_*\sE, \phi\circ f_*B).
\]
This pushforward is compatible with composition, base change, and the tensor product (it satisfies a projection formula) up to canonical isomorphisms. Since $\Vect^\sym$ is an ordinary groupoid, it is straightforward to check the coherence of these canonical isomorphisms, which gives $\Vect^\sym$ the structure of a commutative monoid in $\Pre_\Sigma(\Span^{\fgor,\o}(\Sch_S))$. 
Finally, the forgetful map $\eta\colon \FGor^\o\to \Vect^\sym$ can be uniquely promoted to a morphism of commutative monoids in $\Pre_\Sigma(\Span^{\fgor,\o}(\Sch_S))$, since $\FGor^\o$ is the initial such object. We will write $\FGor^\o_S$ and $\Vect^\sym_S$ for the restrictions of these presheaves to smooth $S$-schemes.

By Lemma~\ref{lem:det(L)} below (see also \cite[Theorem A.1]{BachmannWickelgren}), the determinant $\det\colon K_{\rk=0}\to\Pic$ defines a symmetric monoidal forgetful functor
\[
\Span^\fr(\Sch_S) \to \Span^{\fsyn,\o}(\Sch_S)\subset \Span^{\fgor,\o}(\Sch_S).
\]
We can therefore regard $\FGor^\o$ and $\Vect^\sym$ as commutative monoids in $\Pre_\Sigma(\Span^\fr(\Sch_S))$.

\begin{lem}\label{lem:det(L)}
Let $f$ be a finite syntomic map, $L_f$ its cotangent complex, and $\omega_f$ its relative dualizing sheaf. Then there is a canonical isomorphism
\[\det(L_f) \simeq \omega_f .\]
Moreover, this isomorphism is compatible with base change and composition in the obvious way.
\end{lem}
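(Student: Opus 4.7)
The plan is to argue Zariski-locally on $X$: any finite syntomic morphism $f$ admits a factorization $f = p\circ i$ with $p\colon Y = \A^n_X \to X$ smooth and $i\colon Z \hookrightarrow Y$ a Koszul-regular closed immersion with ideal sheaf $I$. In this local setup both sides can be computed explicitly and matched.

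For $L_f$, the transitivity triangle $i^*L_p \to L_f \to L_i$ together with $L_p \simeq \Omega_p$ and $L_i \simeq (I/I^2)[1]$ presents $L_f$ as the two-term perfect complex $[I/I^2 \to i^*\Omega_p]$ in degrees $[-1, 0]$, so that
\[
\det(L_f) \simeq i^*\det(\Omega_p) \otimes \det(I/I^2)^{-1}.
\]
On the other side, composition of Grothendieck duality gives $\omega_f \simeq i^*\omega_p \otimes \omega_i$ with $\omega_p = \det(\Omega_p)$ in the smooth case and $\omega_i = \det((I/I^2)^\vee) = \det(I/I^2)^{-1}$ for a Koszul-regular closed immersion, yielding the same line bundle. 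This produces a preferred local isomorphism $\det(L_f) \simeq \omega_f$.

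I would then establish independence of the local factorization: given factorizations through $\A^n_X$ and $\A^m_X$, pass to the common refinement through $\A^{n+m}_X$; the extra rank-one factors of $\det(\Omega)$ contributed by additional coordinates cancel pairwise against extra generators of the conormal bundle, and one verifies directly that both original comparison isomorphisms agree with the one obtained from this common refinement. Hence the local isomorphism is canonical and glues to a global isomorphism $\det(L_f) \simeq \omega_f$ on $Z$.

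Compatibility with base change follows because both $L_f$ and (for finite syntomic $f$) $\omega_f$ commute with base change, and local factorizations are stable under base change. For composition $Z \xrightarrow{f} Y \xrightarrow{g} X$, the triangle $f^*L_g \to L_{g\circ f} \to L_f$ and additivity of $\det$ on distinguished triangles yield $\det(L_{g\circ f}) \simeq f^*\det(L_g) \otimes \det(L_f)$, matching the composition isomorphism $\omega_{g\circ f} \simeq f^*\omega_g \otimes \omega_f$ for dualizing sheaves; choosing compatible affine-space factorizations of $f$ and $g$ reduces the compatibility to a local computation of the type already done. The main obstacle I expect is the coherence bookkeeping in the symmetric monoidal structure of $\det$ on perfect complexes: ensuring the canonical isomorphism is natural for all automorphisms of the local data and that the composition coherence diagram commutes on the nose, not merely up to an unspecified unit. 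The detailed coherence verifications of exactly this flavor are carried out in the literature, e.g.\ \cite[Theorem~A.1]{BachmannWickelgren}.
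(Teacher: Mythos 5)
Your argument is correct in outline, but it takes a genuinely different route from the paper. The paper's proof is essentially a citation: it invokes \cite[Tag 0FKD]{stacks}, which provides the canonical isomorphism $\det(L_f)\simeq\omega_f$ (with its base-change compatibility) for finite syntomic morphisms of \emph{noetherian} schemes, and then extends to arbitrary qcqs schemes by noetherian approximation (using precisely that base-change compatibility) and to all schemes by descent. You instead construct the isomorphism directly: locally factoring $f$ through $\A^n_X$, computing $\det(L_f)$ from the transitivity triangle and $\omega_f$ from duality for the regular immersion and the smooth projection, and checking independence of the factorization via a common refinement. Your computation of both sides as $i^*\det(\Omega_p)\otimes\det(I/I^2)^{-1}$ is right, and your treatment of base change and of composition via the transitivity triangle matches the standard adjunction isomorphism $\omega_{g\circ f}\simeq \omega_f\otimes f^*\omega_g$ for finite locally free maps. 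What your approach buys is uniformity over arbitrary base schemes: since finite syntomic maps are finitely presented, the local factorization argument needs no noetherian hypothesis, so the approximation-and-descent step of the paper becomes unnecessary. What it costs is that the genuine content of the lemma --- the verification that the locally defined isomorphism is independent of the chosen factorization (including signs coming from ordering the coordinates and the Koszul generators) and that the composition comparison is coherent --- is exactly the part you defer to the literature; this is where the real work lives, and it is carried out in \cite[Tag 0FKD]{stacks} and in \cite[Theorem A.1]{BachmannWickelgren}, the latter being the same reference the paper points to alongside this lemma. So your proposal is a legitimate self-contained alternative provided you either carry out that coherence bookkeeping or cite it, whereas the paper trades that local work for a citation plus a short reduction to the noetherian case.
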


\begin{proof}
By \cite[Tag 0FKD]{stacks}, there is such a family of isomorphisms for $f$ a finite syntomic morphism between noetherian schemes. Since they are compatible with base change, they uniquely determine the desired isomorphisms for $f$ between qcqs schemes (by noetherian approximation), whence between arbitrary schemes by descent.
\end{proof}

\begin{rem}
	Lemma~\ref{lem:det(L)} holds more generally if $f$ is any quasi-smooth morphism of derived schemes, with $\omega_f=f^!(\sO)$, but the proof is more complicated (in fact, the functor $f^!$ seems not to have been defined in this generality, although one can also define $\omega_f$ more directly).
\end{rem}

In \cite{Schlichting3}, Schlichting defined the Grothendieck--Witt spectrum of a qcqs $\Z[\tfrac 12]$-scheme $S$; we shall denote by $\GWspace(S)$ its underlying space.\footnote{In \emph{loc.\ cit.}, Schlichting works with schemes admitting an ample family of line bundles, but remarks that this assumption can be removed if one uses perfect complexes instead of strictly perfect complexes. In any case, Schlichting's setting suffices for our purposes, since it suffices to define the motivic localization of $\GWspace$.}
The presheaf $\GWspace$ is then a Nisnevich sheaf \cite[Theorem 9.6]{Schlichting3}, and it is $\A^1$-invariant on regular schemes \cite[Theorem 9.8]{Schlichting3}.
By the example in~\cite[p.~1781]{Schlichting3} and by~\cite[Remark~4.13]{Schlichting1}, for $R$ a $\Z[\tfrac 12]$-algebra there is a natural equivalence
\begin{equation*}\label{eqn:GW-Vect-sym}
	\GWspace(R) \simeq \Vect^\sym(R)^\gp .
\end{equation*}
In particular, we have an equivalence $\GWspace \simeq L_\Zar\Vect^{\sym,\gp}$ making $\GWspace$ into a presheaf of $\Einfty$-ring spaces.
By \cite[Remark~5.9 and Theorem~9.10]{Schlichting3}, we have a chain of equivalences of $\GWspace$-modules:
\[\GWspace\simeq \GWspace^{[-4]}\simeq \Omega^4_{\P^1}\GWspace.\]
The element $1$ in the left-hand side gives a canonical element 
$\betah\colon (\P^1)^{\wedge 4} \to \GWspace$, called the \emph{hermitian Bott element}.
The above equivalence means that $\GWspace$ is $\betah$-periodic in the sense of \cite[Section 3]{HoyoisCdh}.

Let $S$ be a scheme with $2\in \sO(S)^\times$.
We define the motivic hermitian K-theory spectrum $\KQ_S\in\SH(S)$ as the motivic spectrum associated with the $\T^{\wedge 4}$-prespectrum
\[
(L_\mot\GWspace_S, L_\mot\GWspace_S,\dotsc),
\]
where $\GWspace_S$ is the restriction of $\GWspace$ to $\Sm_S$, with structure maps $L_\mot\GWspace_S \to \Omega_\T^4 L_\mot\GWspace_S$ induced by the hermitian Bott element. Since the structure maps are $\GWspace_S$-module maps, we can regard $\KQ_S$ as an object of $\Mod_{\GWspace}(\SH(S))$. 
We write $\kq_S$ for the very effective cover of $\KQ_S$.

Note that when $S$ is regular, then $\GWspace_S\simeq L_\mot\GWspace_S$ and the above prespectrum is already a spectrum; in that case, $\KQ_S$ was originally defined by Hornbostel \cite{Hornbostel:2005}. We refer to Proposition~\ref{prop:KQ=GW} below for a justification of our definition in general.

\begin{rem}\label{rem:2}
There is work in progress by B.~Calmès, Y.~Harpaz and the third author on constructing the motivic hermitian K-theory spectrum over base schemes where $2$ need not be invertible \cite{CalmesNardinHarpaz}. All of the results below remain valid without that assumption (in Proposition~\ref{prop:KSp}(2), one must however use skew-symmetric instead of alternating forms).
\end{rem}

\begin{lem}\label{lem:KQ-is-E-infinity}
	Let $S$ be a regular $\Z[\tfrac 12]$-scheme. Then the object $\KQ_S\in \Mod_{\GWspace}(\SH(S))$ has a unique structure of $\Einfty$-algebra lifting the $\Einfty$-ring structure of $\GWspace_S$.
\end{lem}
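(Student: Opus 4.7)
The plan is to identify $\KQ_S$ with the Bott periodization of $\GWspace_S$ inside $\CAlg(\SH(S))$ and invoke the universal property of such localizations of $\Einfty$-rings at central homogeneous elements (e.g.\ \cite[Proposition~7.2.3.19]{HA}). This will simultaneously give existence and uniqueness of the desired $\Einfty$-structure.

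The first step is to promote $\GWspace_S$ to an $\Einfty$-algebra in $\SH(S)$ lifting its multiplicative structure as a presheaf of $\Einfty$-ring spaces. Schlichting's construction of the Grothendieck--Witt space is functorial with respect to tensor products of symmetric bilinear spaces, so $\GWspace$ is naturally a presheaf of $\Einfty$-ring spectra on qcqs $\Z[\tfrac12]$-schemes, and its underlying $\Einfty$-ring space recovers the tensor-product multiplication on $\Vect^{\sym,\gp}$. Since $S$ is regular, this presheaf of $\Einfty$-ring spectra, restricted to $\Sm_S$, is already Nisnevich-local and $\A^1$-invariant by \cite[Theorems~9.6, 9.8]{Schlichting3}, so under the embedding of $\A^1$-invariant Nisnevich sheaves of spectra into $\SH(S)$ it gives an object $\GWspace_S \in \CAlg(\SH(S))$ whose underlying infinite loop space is the original one.

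The second step is to recognize $\KQ_S$ as the Bott localization $\GWspace_S[\betah^{-1}]$ in $\Mod_{\GWspace_S}(\SH(S))$. Since $S$ is regular the motivic localization in the definition of $\KQ_S$ is redundant, and the $\T^{\wedge 4}$-spectrum built from the prespectrum $(\GWspace_S,\GWspace_S,\dots)$ with transition maps given by multiplication by $\betah$ is, by construction, exactly the filtered colimit
\[
\KQ_S \;\simeq\; \colim\bigl(\GWspace_S \xrightarrow{\cdot\betah} \Omega^4_\T\GWspace_S \xrightarrow{\cdot\betah} \Omega^8_\T\GWspace_S \to \cdots\bigr),
\]
where $\betah \in \pi_{8,4}\GWspace_S$ is viewed as a self-map $\Sigma^{8,4}\GWspace_S \to \GWspace_S$ of $\GWspace_S$-modules. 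This is the localization of $\GWspace_S$ at the element $\betah$ of its bigraded (graded-commutative) homotopy ring.

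Finally, inverting a homogeneous element in the homotopy ring of an $\Einfty$-algebra in a presentable stable symmetric monoidal $\infty$-category yields a new $\Einfty$-algebra, which is the initial $\Einfty$-algebra under the original one in which the chosen element becomes invertible \cite[Proposition~7.2.3.19]{HA}. Applied to the $\Einfty$-algebra $\GWspace_S\in\CAlg(\SH(S))$ and the element $\betah$, this produces an $\Einfty$-algebra $\GWspace_S[\betah^{-1}]$ equipped with a map from $\GWspace_S$, and any $\Einfty$-algebra structure on an object of $\Mod_{\GWspace_S}$ on which $\betah$ already acts invertibly is uniquely determined as coming from this localization. Combined with step two this gives the lemma. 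The main obstacle is step one: making sure Schlichting's tensor-product multiplication genuinely yields a coherent $\Einfty$-ring in $\SH(S)$ and not merely a multiplicative homotopy structure; once this is in place the rest of the argument is a formal consequence of the localization machinery.
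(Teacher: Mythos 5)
Your overall strategy is the right one — recognize $\KQ_S$ as a Bott periodization and transport the $\Einfty$-structure — but step one has a genuine gap, and it is exactly the gap the paper's cited ingredient is designed to bridge.

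The problem is the sentence ``under the embedding of $\A^1$-invariant Nisnevich sheaves of spectra into $\SH(S)$ it gives an object $\GWspace_S\in\CAlg(\SH(S))$.'' There is no such embedding: $\SH(S)$ consists of $\T$-spectra, and passing from $\A^1$-invariant Nisnevich sheaves of $S^1$-spectra to $\SH(S)$ is a (non-fully-faithful) $\otimes$-inversion of $\G_m$, not an inclusion. To produce an $\Einfty$-algebra in $\SH(S)$ whose underlying motivic space ($\Omega^\infty_\T$) is $\GWspace_S$, you must supply $\Einfty$-coherent $\T$-deloopings of the multiplicative presheaf $\GWspace$ — but such an object \emph{is} $\KQ_S$ with the structure the lemma asserts. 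In other words, as written your step one presupposes the conclusion. Applying $\Sigma^\infty_\T$ instead would give a genuine $\Einfty$-algebra in $\SH(S)$, but its infinite $\T$-loop space would be $\Omega^\infty_\T\Sigma^\infty_\T\GWspace_S$, not $\GWspace_S$, so it is not the object you want to localize either.

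The paper resolves this with \cite[Proposition 3.2]{HoyoisCdh}: there is a \emph{symmetric monoidal} equivalence between $\betah$-periodic $\GWspace$-modules in $\H_*(S)$ and $\betah$-periodic $\GWspace$-modules in $\SH(S)$, implemented by $\Sigma^\infty_\T(-)[\betah^{-1}]$ and $\Omega^\infty_\T$. Since $S$ is regular, $\KQ_S=(\GWspace_S,\GWspace_S,\dotsc)$ is visibly $\betah$-periodic and $\Omega^\infty_\T\KQ_S\simeq\GWspace_S$, i.e.\ $\KQ_S$ corresponds to the unit on the space side. A monoidal unit carries a unique $\Einfty$-algebra structure, and this transports across the equivalence, giving existence and uniqueness in one stroke — without ever having to invoke \cite[Proposition 7.2.3.19]{HA} and without having to first manufacture an $\Einfty$-ring in $\SH(S)$ to localize. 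Once you have the periodic-modules equivalence, your steps two and three are subsumed by it; without it, the argument does not get off the ground.
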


\begin{proof}
	Since $S$ is regular we have $\KQ_S=(\GWspace_S,\GWspace_S,\dotsc)$.
	The action of $\betah$ on $\KQ_S$ is given levelwise by its action on $\GWspace_S$, and hence it is invertible. The motivic spectrum $\KQ_S$ is thus $\betah$-periodic in the sense of \cite[Section 3]{HoyoisCdh}. By \cite[Proposition 3.2]{HoyoisCdh}, there is an equivalence of symmetric monoidal $\infty$-categories
	\[
	\Sigma^\infty_\T(-)[\betah^{-1}]: P_\betah\Mod_{\GWspace}(\H_*(S)) \rightleftarrows P_\betah \Mod_{\GWspace}(\SH(S)): \Omega^\infty_\T,
	\]
	where $P_\betah\Mod_{\GWspace}$ on either side denotes the subcategory of $\betah$-periodic $\GWspace$-modules.
	Since $\Omega^\infty_\T \KQ_S\simeq \GWspace_S$, this proves the claim.
\end{proof}

By construction, there is a $\T^{\wedge 4}$-prespectrum of presheaves on \emph{all} $\Z[\tfrac 12]$-schemes whose restriction to $\Sm_S$ gives $\KQ_S$ for all $S$. This implies that $S\mapsto \KQ_S$ is a section of the cocartesian fibration over $\Sch_{\Z[\frac 12]}^\op$ classified by $\SH(-)$. In particular, for any morphism of $\Z[\tfrac 12]$-schemes $f\colon T\to S$, we have a canonical comparison map $f^*(\KQ_S)\to \KQ_T$ in $\SH(T)$.

\begin{lem}\label{lem:KQ-bc}
	For any morphism of $\Z[\tfrac 12]$-schemes $f\colon T\to S$, the canonical map $f^*(\KQ_S)\to \KQ_T$ is an equivalence.
\end{lem}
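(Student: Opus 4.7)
The strategy is to unfold the construction of $\KQ_S$ and observe that each ingredient commutes with $f^*$ in a suitably strong sense. Since $f^*\colon \SH(S)\to\SH(T)$ is a symmetric monoidal left adjoint, it commutes with $\Sigma^\infty_\T$, with the desuspensions $\Sigma^{-4n}_\T$, and with filtered colimits; hence the $\T^{\wedge 4}$-spectrification procedure is preserved by $f^*$, and the proof reduces to verifying two points: (i) $f^*(L_\mot\GWspace_S)\simeq L_\mot\GWspace_T$ in $\H(T)$, and (ii) this equivalence intertwines the hermitian Bott elements over $S$ and $T$.

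For (i), the key observation is that $\GWspace$ is a single presheaf (indeed, a Nisnevich sheaf by \cite[Theorem~9.6]{Schlichting3}) on $\Sch_{\Z[\tfrac 12]}$, whose restrictions to $\Sm_S$ and $\Sm_T$ give $\GWspace_S$ and $\GWspace_T$. Denoting by $f^*_{\mathrm{pre}}$ the left Kan extension of presheaves along base change $\Sm_S\to\Sm_T$, $X\mapsto X_T$, the unit of this adjunction yields a natural comparison $f^*_{\mathrm{pre}}\GWspace_S\to \GWspace_T$, which I would verify is a Nisnevich-local equivalence: indeed, evaluating on a henselian local scheme $\Spec A$ over $T$, both sides compute Schlichting's $\GWspace(A)$, and the comparison is the identity. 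Since the left adjoint $f^*\colon \H(S)\to \H(T)$ factors as $L_\mot\circ f^*_{\mathrm{pre}}$, applying $L_\mot$ then gives (i). For (ii), the hermitian Bott element is constructed from Schlichting's $\GWspace$-module equivalence $\GWspace\simeq \Omega^4_{\P^1}\GWspace$, which is natural in the base scheme; thus $\betah$ can be realized as a single class $(\P^1)^{\wedge 4}\to \GWspace$ over $\Spec\Z[\tfrac 12]$, and functoriality immediately yields $f^*\betah_S=\betah_T$.

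The main obstacle is (i) when $T$ is not regular: one cannot identify $L_\mot\GWspace_T$ with $\GWspace_T$, so the comparison must be checked at the level of presheaves before motivic localization. The cleanest route is the Nisnevich-local argument above, which only uses Nisnevich descent of $\GWspace$ over arbitrary $\Z[\tfrac 12]$-schemes; any failure of $\A^1$-invariance on the non-regular side is automatically absorbed by applying $L_\mot$ to both sides of a Nisnevich-local equivalence. Once (i) and (ii) are in hand, chasing through the prespectrum construction gives $f^*\KQ_S\simeq \KQ_T$ as claimed.
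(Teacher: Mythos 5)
Your reduction to the two points (i) and (ii) is reasonable, and (ii) together with the commutation of $f^*$ with spectrification is fine, but the justification of (i) has a genuine gap, and it sits exactly where the content of the lemma lies. By definition, $f^*_{\mathrm{pre}}\GWspace_S$ only depends on the restriction $\GWspace|_{\Sm_S}$, and its value (or Nisnevich stalk) on an essentially smooth henselian local $T$-scheme $\Spec A$ is the colimit $\colim_{X}\GWspace(X)$ taken over smooth $S$-schemes $X$ equipped with an $S$-morphism $\Spec A\to X$. Since $A$ is pro-smooth over $T$ but in general not over $S$ (think of $T$ a non-reduced or otherwise singular $S$-scheme), no continuity argument identifies this colimit with $\GWspace(A)$; the assertion that ``both sides compute Schlichting's $\GWspace(A)$'' is precisely the statement to be proved, and Nisnevich descent of $\GWspace$ on all $\Z[\tfrac 12]$-schemes is not enough to prove it. Indeed the analogous claim fails for a general finitary Nisnevich sheaf: take $\sF(X)=\sO(X_{\mathrm{red}})$, $S=\Spec\Z[\tfrac 12]$ and $T=\Spec k[\epsilon]/\epsilon^2$. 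On $\Sm_S$ one has $\sF=\sO$, which is represented by $\A^1_S$, so $f^*_{\mathrm{pre}}\sF_S\simeq \sO$ on $\Sm_T$ and its stalk at $T$ is $k[\epsilon]/\epsilon^2$, whereas $\sF_T(T)=k$; the comparison map is the (non-injective) reduction map. So your ``cleanest route'' argument, as stated, does not go through.

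What makes the statement true for $\GWspace$ is a geometric input your argument never uses: Zariski-locally on affine schemes $\GWspace\simeq\Vect^{\sym,\gp}$, and $\Vect^\sym$ is a \emph{smooth algebraic stack with affine diagonal}. This is how the paper proceeds: since $f^*$ commutes with group completion, with Zariski/Nisnevich sheafification, with motivic localization and with spectrification, the lemma reduces to showing that $f^*(\Vect^\sym_S)\to\Vect^\sym_T$ is a motivic equivalence, and this is already a Zariski-local equivalence by \cite[Proposition A.0.4]{deloop3} — roughly, smoothness of the stack allows one to lift sections over (henselian) local $T$-algebras along a smooth atlas to sections of schemes smooth over $S$, and the affine diagonal controls the isomorphism data. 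Your argument can be repaired along these lines by replacing the unjustified stalkwise claim for the group-completed space $\GWspace$ with the corresponding statement for the groupoid-valued presheaf $\Vect^\sym$ and invoking its algebraicity and smoothness; as written, however, step (i) is unsupported.
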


\begin{proof}
	The functor $f^*$ is compatible with spectrification,  motivic localization, and group completion. Hence, it suffices to show that the canonical map $f^*(\Vect^\sym_S)\to \Vect^\sym_T$ is a motivic equivalence of presheaves on $\Sm_T$. In fact, it is a Zariski-local equivalence by \cite[Proposition A.0.4]{deloop3}, since $\Vect^\sym$ is a smooth algebraic stack with affine diagonal.
\end{proof}

Combining Lemmas~\ref{lem:KQ-is-E-infinity} and \ref{lem:KQ-bc}, we obtain a canonical $\Einfty$-ring structure on $\KQ_S$ for any $\Z[\tfrac 12]$-scheme $S$, which is compatible with base change (an $\Einfty$-ring structure on $\KQ_S$ was also constructed in this generality by Lopez-Avila in \cite{AvilaThesis}; we suspect but do now know that it is equivalent to ours).

Next, we show that the motivic spectrum $\KQ_S$ represents Karoubi–Grothendieck–Witt theory made $\A^1$-homotopy invariant:

\begin{prop}\label{prop:KQ=GW}
	Let $S$ be a qcqs scheme with $2\in\sO(S)^\times$ and let $n\in\Z$. Then there is a natural equivalence of spectra (and of $\Einfty$-ring spectra if $n=0$)
	\[
	\Gamma(S,\Sigma^n_\T\KQ) \simeq (\Lhtp\KGW^{[n]})(S),
	\]
	where $\KGW^{[n]}$ is the $n$th shifted Karoubi–Grothendieck–Witt spectrum \cite[Definition 8.6]{Schlichting3}.
\end{prop}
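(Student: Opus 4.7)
My plan is to compare both sides by unwinding the definitions of $\KQ_S$ and $\KGW$, and then invoking Schlichting's $4$-fold Bott periodicity to produce matching filtered-colimit descriptions on each side.

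First, I would compute the left-hand side. By the very construction of $\KQ_S$ in this section, $\Sigma^n_\T\KQ_S$ is the spectrification of the $\T^{\wedge 4}$-prespectrum whose levels are all equal to $L_\mot\GWspace_S$, with bonding maps given by multiplication by the hermitian Bott element $\betah$. Since $\Omega^\infty_\T$ preserves filtered colimits and $S$ is itself a smooth $S$-scheme via the identity, this yields
\[
\Gamma(S,\Sigma^n_\T\KQ) \simeq \colim_k \Omega^{4k-n}_\T (L_\mot\GWspace_S)(S),
\]
with transition maps induced by $\betah$.

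Second, I would compute the right-hand side in a parallel way. Schlichting's fundamental theorem (\cite[Remark~5.9 and Theorem~9.10]{Schlichting3}) supplies, for each $m\in\Z$, equivalences $\KGW^{[m]}\simeq \Omega^4_{\P^1}\KGW^{[m]}$ of Nisnevich sheaves of spectra that are implemented by multiplication with $\betah$; indeed, this is the very equivalence used to define $\betah$ earlier in this section. Together with the identification $\Omega^\infty\KGW^{[m]}\simeq \GWspace^{[m]}$ and Karoubi $4$-periodicity $\GWspace^{[m]}\simeq \GWspace^{[m-4]}$, iterated application rewrites
\[
(\Lhtp\KGW^{[n]})(S) \simeq \colim_k \Omega^{4k-n}_\T(\Lhtp\GWspace)(S).
\]

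The two descriptions then match: because $\GWspace$ is already a Nisnevich sheaf on qcqs $\Z[\tfrac12]$-schemes (\cite[Theorem~9.6]{Schlichting3}), the presheaves $L_\mot\GWspace$ and $\Lhtp\GWspace$ have the same value at $S$. The $\Einfty$-ring refinement in the case $n=0$ then follows from Lemma~\ref{lem:KQ-is-E-infinity}, since both sides inherit their multiplicative structure from the same $\Einfty$-ring structure on $\GWspace$. The main obstacle I anticipate is not the identification of the two filtered colimits but the bookkeeping required to certify that Schlichting's Bott equivalence and the bonding map $\betah$ of our prespectrum are implemented by literally the same map of presheaves; this is however tautological from the way $\betah$ was constructed in this section, so the argument should reduce to a careful chase through the definitions rather than a substantive new computation.
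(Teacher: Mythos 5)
The basic strategy — match up telescope/filtered-colimit descriptions of both sides indexed by powers of the Bott element — is also what drives the paper's proof, but your execution has a genuine gap, and your closing paragraph misdiagnoses where the real work is.

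The problem is the assertion
\[
(\Lhtp\KGW^{[n]})(S) \simeq \colim_k \Omega^{4k-n}_\T(\Lhtp\GWspace)(S).
\]
As written there is a type mismatch: the left side is a spectrum (the value of a presheaf of spectra), while the right side is a filtered colimit of pointed spaces. More importantly, even after fixing that, the identity $\Omega^\infty\KGW^{[m]}\simeq\GWspace^{[m]}$ that you cite only relates the zeroth space, and you cannot reconstruct the (non-connective) spectrum $\KGW$ from the space $\GWspace$ alone. The Karoubi delooping $\KGW$ genuinely differs from the Grothendieck--Witt spectrum $\GWspectrum$, and a fortiori from the connective spectrum of $\GWspace$, in negative homotopy degrees. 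The paper's proof introduces the four prespectra $(\GWspace,\dots)$, $(\GWspectrum_{\geq 0},\dots)$, $(\GWspectrum,\dots)$, $(\KGW,\dots)$ precisely to navigate this, and the decisive input is Schlichting's~\cite[Proposition~9.3(1)]{Schlichting3}: the maps $\Omega^n_{\Sigma\G_m}\GWspectrum_{\geq 0}\to\Omega^n_{\Sigma\G_m}\GWspectrum\to\Omega^n_{\Sigma\G_m}\KGW$ are isomorphisms on $\pi_i$ for $i\geq -n$, so that the differences disappear upon $(\Sigma\G_m)^{\wedge 4}$-spectrification. Without this input, the two telescopes do not obviously agree.

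So the sentence ``the main obstacle\dots is not the identification of the two filtered colimits but the bookkeeping\dots, this is however tautological'' has it backwards. You are right that the bonding maps on both sides are the same $\betah$; that part is indeed formal. What is not formal is that the telescope built from $\GWspace$ and the telescope built from $\KGW$ give the same motivic spectrum. That step requires the stable range comparison from Schlichting and, to go back and forth between $\T^{\wedge 4}$-spectra in motivic spaces and in motivic $S^1$-spectra, the fact that $\Omega^\infty$ preserves sifted colimits of connective spectra (which the paper also needs and uses). Your proposal also silently identifies $\Omega_\T$ with $\Omega_{\P^1}$ and $\Omega_{\Sigma\G_m}$; the paper spends a paragraph justifying this by the canonical null-homotopies of $\beta$ on $\A^1$ and $\P^1-0$.
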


\begin{proof}
	We start with the observation that the restriction of the Bott element $\beta\in \GWspace^{[1]}(\P^1)$ defined in \cite[\sectsign 9.4]{Schlichting3} to either $\A^1$ or $\P^1-0$ is canonically null-homotopic. This gives rise to the following four variations on the domain of the Bott element, which are all motivically equivalent:
	\[
	\begin{tikzcd}
		\T=\A^1/\G_m \ar{r} \ar{d} & \P^1/(\P^1-0) \ar{dr}{\beta} & \P^1/\infty \ar{l} \ar{d}{\beta} \\
		\Sigma\G_m \ar{rr}{\beta} & & \GWspace^{[1]}\rlap.
	\end{tikzcd}
	\]
	Let us write $\GWspectrum$ for the Grothendieck–Witt spectrum and $\GWspectrum_{\geq 0}$ for its connective cover. We consider the following four ``periodic'' prespectra with structure maps given by $\betah=\beta^4$:
	\begin{itemize}
		\item[(A)] $(\GWspace,\GWspace,\dotsc)$,
		\item[(B)] $(\GWspectrum_{\geq 0},\GWspectrum_{\geq 0},\dotsc)$,
		\item[(C)] $(\GWspectrum,\GWspectrum,\dotsc)$,
		\item[(D)] $(\KGW,\KGW,\dotsc)$.
	\end{itemize}
	Here, (A) is a prespectrum in presheaves of pointed spaces, which defines $\KQ$, whereas (B)–(D) are prespectra in presheaves of spectra. Note also that (A), (C), and (D) are $(\P^1)^{\wedge 4}$-spectra, by the projective bundle formula \cite[Theorem~9.10]{Schlichting3}.	
	Since $\KGW^{[n]}$ is a Nisnevich sheaf of spectra on qcqs schemes \cite[Theorem 9.6]{Schlichting3}, its motivic localization is given by $\Lhtp\KGW^{[n]}$. Moreover, since $\Lhtp$ commutes with $\Omega_{\P^1}$ and $\Omega_{\P^1}\KGW^{[n]}\simeq \KGW^{[n-1]}$, we see that the motivic spectrum defined by (D) satisfies the desired conclusion. It will thus suffice to show that (A)–(D) all define the same motivic spectrum. The fact that we get an equivalence of $\Einfty$-rings when $n=0$ follows by repeating the proof of Lemma~\ref{lem:KQ-is-E-infinity} with $\Lhtp\KGW$ instead of $\GWspace$.
	
	 The maps $(B)\to (C)\to(D)$ become equivalences after $(\Sigma\G_m)^{\wedge 4}$-spectrification, since the maps \[\Omega_{\Sigma\G_m}^{n}\GWspectrum_{\geq 0}\to \Omega_{\Sigma\G_m}^n\GWspectrum\to \Omega_{\Sigma\G_m}^n\KGW\] induce isomorphisms on $\pi_i$ for $i\geq -n$ \cite[Proposition 9.3(1)]{Schlichting3}. Hence, the motivic spectra associated with (B), (C), and (D) are the same. Furthermore, since $\Lhtp$ commutes with spectrification, we see that the motivic spectrum associated with (B) can be obtained in two steps: first apply $\Lhtp$ and then $(\Sigma\G_m)^{\wedge 4}$-spectrify. As $\Omega^\infty$ preserves sifted colimits of \emph{connective} spectra, this explicit formula for the localization of (B) implies that the motivic spectra associated with (A) and (B) are identified under the equivalence between $\T^{\wedge 4}$-spectra in motivic spaces and in motivic $S^1$-spectra (which is implemented by the functor $\Omega^\infty$ levelwise). This completes the proof.
\end{proof}

\begin{samepage}
\begin{prop}\label{prop:KQ-kq-Vect^bil}
	Assume $2\in\sO(S)^\times$.
	\begin{enumerate}
		\item There is an equivalence of $\Einfty$-ring spectra
	\[
	\KQ_S\simeq (\Sigma^\infty_{\T,\fr}\Vect^\sym_S)[\betah^{-1}].
	\]
	\item If $S$ is regular over a field, there is an equivalence of $\Einfty$-ring spectra
	\[
	\kq_S\simeq \Sigma^\infty_{\T,\fr}\Vect^\sym_S.
	\]
	\end{enumerate}
\end{prop}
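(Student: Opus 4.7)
The plan is to compare both sides using the motivic recognition principle and the $\betah$-periodic equivalence from \cite[Proposition~3.2]{HoyoisCdh} already invoked in Lemma~\ref{lem:KQ-is-E-infinity}.

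For part~(1), I would first construct a canonical morphism of $\Einfty$-ring spectra
\[
\alpha\colon (\Sigma^\infty_{\T,\fr}\Vect^\sym_S)[\betah^{-1}] \to \KQ_S.
\]
By the $(\Sigma^\infty_{\T,\fr},\Omega^\infty_\T)$-adjunction applied to $\Einfty$-algebras, it suffices to produce an $\Einfty$-semiring map $\Vect^\sym_S \to \Omega^\infty_\T\KQ_S = L_\mot\Vect^{\sym,\gp}_S$ in $\Pre_\Sigma(\Span^\fr(\Sm_S))$, and the canonical map into the motivic group completion serves. Since $\KQ_S$ is $\betah$-periodic, this extends uniquely over the $\betah$-inversion. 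To show $\alpha$ is an equivalence, apply $\Omega^\infty_\T$ and use that this functor is conservative on $P_\betah\Mod_{\GWspace}(\SH(S))$ by \cite[Proposition~3.2]{HoyoisCdh}. The right-hand side gives $L_\mot\GWspace_S$ by definition. On the left-hand side, the same equivalence identifies $\Omega^\infty_\T$ of the periodization with the periodization of $\Omega^\infty_\T\Sigma^\infty_{\T,\fr}\Vect^\sym_S$, which by the reconstruction theorem \cite[Theorem~3.5.12]{deloop1} (or rather, by the motivic group completion computation for $\Vect^{\sym,\gp}_S$) is $L_\mot\Vect^{\sym,\gp}_S = L_\mot\GWspace_S$; since the latter is already $\betah$-periodic, no actual inversion occurs. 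Hence $\Omega^\infty_\T\alpha$ is an equivalence, so $\alpha$ is.

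For part~(2), I would invoke the motivic recognition principle \cite[Theorem~3.5.14]{deloop1}, which says that over a perfect field $\Sigma^\infty_{\T,\fr}\colon \H^\fr(S)^\gp \isoto \SH^\fr(S)^\veff$ is an equivalence. In particular, the very effective cover $\kq_S$ of $\KQ_S$ can be computed as
\[
\kq_S \simeq \Sigma^\infty_{\T,\fr}\,\Omega^\infty_\T\KQ_S = \Sigma^\infty_{\T,\fr}\,L_\mot\Vect^{\sym,\gp}_S.
\]
Since $\Sigma^\infty_{\T,\fr}$ lands in $\SH(S)$ (hence factors through motivic localization) and, as a stabilization, is insensitive to group completion of $\Einfty$-semirings, this equals $\Sigma^\infty_{\T,\fr}\Vect^\sym_S$. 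For $S$ regular over a non-perfect field, reduce to the perfect closure using Lemma~\ref{lem:KQ-bc} together with the compatibility of $\Sigma^\infty_{\T,\fr}$ with base change.

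The main obstacle is carrying the $\Einfty$-algebra structure coherently through the construction. One must ensure that the adjunction $(\Sigma^\infty_{\T,\fr},\Omega^\infty_\T)$ is symmetric monoidal (so that it lifts to $\Einfty$-algebras) and that the $\betah$-periodization respects $\Einfty$-structures, both of which are consequences of the formal setup in \cite{deloop1} and \cite{HoyoisCdh}. A secondary subtlety is that one needs to check that the various objects in play---$(\Sigma^\infty_{\T,\fr}\Vect^\sym_S)[\betah^{-1}]$ and $\KQ_S$---really do sit in the subcategory $P_\betah\Mod_{\GWspace}(\SH(S))$ so that the conservativity of $\Omega^\infty_\T$ applies. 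This is automatic once one observes that both are $\Einfty$-algebras under $\GWspace_S$ in which $\betah$ is invertible, but making the identification of module structures precise is where some care is required.
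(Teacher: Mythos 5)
Your strategy of building an explicit comparison map $\alpha$ via adjunction and then checking $\Omega^\infty_\T\alpha$ is a reasonable alternative to the paper's proof, which instead applies the $\betah$-periodic equivalence of categories from \cite[Proposition~3.2]{HoyoisCdh} directly (in the framed setting) to read off the identification without constructing a map. However, your argument as written has some genuine gaps.

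First, the assertion that $\Omega^\infty_\T\KQ_S = L_\mot\Vect^{\sym,\gp}_S$ ``by definition'' is not correct: $\KQ_S$ is defined as the \emph{spectrification} of the $\T^{\wedge 4}$-prespectrum $(L_\mot\GWspace_S, L_\mot\GWspace_S, \dotsc)$, so $\Omega^\infty_\T\KQ_S$ equals the level-zero space only if the prespectrum is already a spectrum, i.e.\ the structure maps $L_\mot\GWspace_S \to \Omega^4_\T L_\mot\GWspace_S$ are equivalences. This is clear for $S$ regular (there $L_\mot\GWspace_S=\GWspace_S$ and periodicity holds before $\A^1$-localization), but for non-regular $S$ this requires argument because $L_\mot$ does not commute with $\Omega_\T$ for presheaves of spaces. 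The paper deals with this by establishing the equivalence for $S$ regular and then deducing the general case by base change (Lemma~\ref{lem:KQ-bc} on the $\KQ$ side, \cite[Lemma~16]{framed-loc} on the framed suspension side). You only mention base change for part~(2), but it is equally necessary for part~(1).

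Second, you compute $\Omega^\infty_\T\Sigma^\infty_{\T,\fr}\Vect^\sym_S = L_\mot\Vect^{\sym,\gp}_S$ by invoking (what is actually) the motivic recognition principle \cite[Theorem~3.5.14]{deloop1} — you cite the reconstruction theorem \cite[Theorem~3.5.12]{deloop1}, which is the wrong statement — and the recognition principle is only available over (pro-smooth over) perfect fields. This detour is unnecessary. The paper sidesteps it by observing that $\Sigma^\infty_{\T,\fr}$ factors through Nisnevich sheafification and group completion, so $\Sigma^\infty_{\T,\fr}\Vect^\sym_S \simeq \Sigma^\infty_{\T,\fr}\GWspace_S$, and then applying the equivalence of \cite[Proposition~3.2]{HoyoisCdh} (as categories) in the framed setting. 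That way part~(1) never touches the recognition principle, which is reserved for part~(2) alone.

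Third, before you can form the map $\alpha$ in $\Pre_\Sigma(\Span^\fr(\Sm_S))$, you must know that $\Omega^\infty_\T\KQ_S$, with the framed transfers it inherits via the ``forget transfers'' equivalence $\SH^\fr(S)\simeq\SH(S)$, agrees as a framed presheaf with $L_\mot\Vect^{\sym,\gp}_S$ carrying its oriented finite Gorenstein transfers. This is exactly what the paper's first step accomplishes: it exhibits the $\T^{\wedge 4}$-prespectrum defining $\KQ_S$ as a $\GWspace_S$-module with oriented finite Gorenstein transfers (using that $L_\Nis$ preserves such presheaves), thereby lifting $\KQ_S$, together with its $\GWspace_S$-module and $\Einfty$-structure, coherently into the framed category. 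Your remark at the end acknowledges that ``making the identification of module structures precise is where some care is required,'' but this is not an afterthought — it is precisely the content of that first step, and without it the claimed conservativity argument has nothing to bite on.
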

\end{samepage}

\begin{proof}
	Since $\GWspace$ is a Nisnevich sheaf, which is given by $\Vect^{\sym,\gp}$ on affine schemes, we have $\GWspace=L_\Nis\Vect^{\sym,\gp}$. Since Nisnevich sheafification preserves presheaves with oriented finite Gorenstein transfers (cf.\ \cite[Proposition 3.2.4]{deloop1}), the $\T^{\wedge 4}$-prespectrum defining $\KQ_S$ is a $\GWspace_S$-module with oriented finite Gorenstein transfers. In particular, it defines a $\GWspace_S$-module in framed motivic spectra whose underlying motivic spectrum is $\KQ_S$. Repeating the proof of Lemma~\ref{lem:KQ-is-E-infinity} in the framed setting, we see that $\KQ_S \simeq (\Sigma^\infty_{\T,\fr}\Vect^\sym_S)[\betah^{-1}]$ as $\Einfty$-rings when $S$ is regular. By Lemma~\ref{lem:KQ-bc}, the left-hand side is stable under base change. The right-hand side is stable under base change as well by \cite[Lemma~16]{framed-loc}, which proves (1) in general.
	The proof of (2) uses the motivic recognition principle and is identical to the proof of \cite[Corollary 5.2]{robbery}.
\end{proof}

We have analogous results with the shifted Grothendieck–Witt space $\GWspace^{[2]}$.
For a $\Z[\tfrac 12]$-scheme $S$, we define $\KSp_S\in\Mod_\GWspace(\SH(S))$ to be the motivic spectrum associated with the $\T^{\wedge 4}$-prespectrum
\[
(L_\mot\GWspace^{[2]}, L_\mot\GWspace^{[2]},\dotsc),
\]
with structure maps induced by $\betah$. We write $\ksp_S$ for the very effective cover of $\KSp_S$.

For a $\Z[\tfrac 12]$-algebra $R$, we have 
\[
\GWspace^{[2]}(R)\simeq \Vect^\alt(R)^\gp
\]
where $\Vect^\alt$ is the stack of non-degenerate alternating bilinear forms. Since the latter is a smooth algebraic stack with affine diagonal, we deduce as in Lemma~\ref{lem:KQ-bc} that $\KSp_S$ is stable under arbitrary base change. From the equivalence $\Omega_{\P^1}^2\GWspace^{[2]}\simeq \GWspace$, it follows immediately that
\[\KSp_S\simeq \Sigma^2_\T\KQ_S\]
when $S$ is regular, whence in general by base change. 

Arguing exactly as in Proposition~\ref{prop:KQ-kq-Vect^bil}, we obtain the following result:

\begin{prop}\label{prop:KSp}
	Assume $2\in\sO(S)^\times$.
	\begin{enumerate}
		\item There is an equivalence of $\KQ_S$-module spectra
	\[
	\KSp_S\simeq (\Sigma^\infty_{\T,\fr}\Vect^\alt_S)[\betah^{-1}].
	\]
	\item If $S$ is regular over a field, there is an equivalence of $\kq_S$-module spectra
	\[
	\ksp_S\simeq \Sigma^\infty_{\T,\fr}\Vect^\alt_S.
	\]
	\end{enumerate}
\end{prop}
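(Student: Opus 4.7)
My plan is to run the argument of Proposition~\ref{prop:KQ-kq-Vect^bil} essentially verbatim with the substitution $(\Vect^\sym,\GWspace,\KQ,\kq)\mapsto(\Vect^\alt,\GWspace^{[2]},\KSp,\ksp)$. First, since $\GWspace^{[2]}$ is a Nisnevich sheaf and agrees with $\Vect^{\alt,\gp}$ on affine $\Z[\tfrac12]$-schemes, we have $\GWspace^{[2]}\simeq L_\Nis\Vect^{\alt,\gp}$. Next, I would promote $\Vect^\alt$ to a $\Vect^\sym$-module in $\Pre_\Sigma(\Span^{\fgor,\o}(\Sch_S))$: the pushforward of an alternating form $B$ on $\sE\in\Vect(Z)$ along an oriented finite Gorenstein morphism $(f,\phi)\colon Z\to X$ is the bilinear form $(s,s')\mapsto \phi(B(s,s'))$ on $f_*\sE$, which is still alternating (applying $\phi$ to $B(s,s)=0$) and non-degenerate (because $\phi$ trivializes $\omega_f$). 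The tensor product of an alternating form with a symmetric form is alternating, giving the $\Vect^\sym$-module structure. All higher coherences are strict since $\Vect^\alt$ is a $1$-groupoid-valued presheaf, just as in the symmetric case.

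Because Nisnevich sheafification and $L_\mot$ preserve oriented finite Gorenstein (and hence framed) transfers, the $\T^{\wedge 4}$-prespectrum defining $\KSp_S$ lifts to a framed prespectrum which is a module over the framed $\Einfty$-ring $\KQ_S$. Over a regular base the $\betah$-action is a levelwise equivalence, so the framed analogue of Lemma~\ref{lem:KQ-is-E-infinity}, invoking \cite[Proposition~3.2]{HoyoisCdh}, yields $\KSp_S\simeq(\Sigma^\infty_{\T,\fr}\Vect^\alt_S)[\betah^{-1}]$ as $\KQ_S$-modules. For general $S$, both sides are stable under base change---the right-hand side by \cite[Lemma~16]{framed-loc}, and the left-hand side by the obvious analogue of Lemma~\ref{lem:KQ-bc}, using that $\Vect^\alt$ is a smooth algebraic stack with affine diagonal together with \cite[Proposition~A.0.4]{deloop3}---so reducing to $\Spec\Z[\tfrac12]$ proves (1).

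For (2), I would copy the strategy of \cite[Corollary~5.2]{robbery} almost word for word: the motivic recognition principle \cite[Theorem~3.5.14]{deloop1} identifies $\ksp_S$ with the framed suspension spectrum of $\Omega^\infty_\T\ksp_S\simeq L_\mot\Vect^{\alt,\gp}_S$, and the module refinement of (1) promotes this identification to one of $\kq_S$-modules. The only step with any genuine content is the construction of the $\Vect^\sym$-module structure on $\Vect^\alt$ in the oriented finite Gorenstein span category in the first paragraph; this is the main thing to check carefully, but it is no harder than the corresponding data for $\Vect^\sym$ itself, since the target is a $1$-groupoid.
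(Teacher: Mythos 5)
Your proposal is correct and follows essentially the same route as the paper, which proves this proposition by ``arguing exactly as in Proposition~\ref{prop:KQ-kq-Vect^bil}'' after recording that $\GWspace^{[2]}(R)\simeq\Vect^\alt(R)^\gp$ for $\Z[\tfrac12]$-algebras $R$ and that $\KSp_S$ is stable under base change because $\Vect^\alt$ is a smooth algebraic stack with affine diagonal. The one ingredient you spell out that the paper only records afterwards (in the remark following the proposition) is the compatibility of the oriented finite Gorenstein pushforward with alternating forms and with tensoring by symmetric forms, and your verification of it is correct.
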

 
\begin{rem}
	 Combining Propositions~\ref{prop:KQ-kq-Vect^bil} and \ref{prop:KSp}, we find
	\[
	\KQ_S\oplus \KSp_S \simeq \Sigma^\infty_{\T,\fr}\big(\Vect^\sym_S\times \Vect^\alt_S\big)[\betah^{-1}].
	\]
	One can easily check that the groupoid $\Vect^\sym\times \Vect^\alt$ has a symmetric monoidal structure given by tensoring bilinear forms, which is compatible with the oriented finite Gorenstein transfers. We deduce that $\KQ_S\oplus \KSp_S$ has an $\Einfty$-ring structure such that the summand inclusion $\KQ_S\to \KQ_S\oplus \KSp_S$ is an $\Einfty$-map.
\end{rem}

\begin{rem}\label{rem:SL-orientation}
Recall that $\MSL_S\simeq \Sigma^\infty_{\T,\fr}\FSyn^\o_S$ \cite[Theorem 3.4.3]{deloop3}. Hence, by Proposition~\ref{prop:KQ-kq-Vect^bil}, the forgetful map $\FSyn^\o\to \Vect^\sym$ induces a morphism of $\Einfty$-ring spectra
\[
\MSL_S \to \KQ_S
\]
for any $\Z[\tfrac 12]$-scheme $S$, which factors through $\kq_S$ since $\MSL_S$ is very effective.
\end{rem}

\begin{rem}
	The oriented finite Gorenstein transfers in hermitian K-theory are known to not fully depend on the trivialization of the dualizing line $\omega_f$, but only on a choice of square root $\omega_f\simeq \mathcal L^{\otimes 2}$. Correspondingly, the coherent $\SL$-orientation of $\KQ_S$ from Remark~\ref{rem:SL-orientation} can be improved to an $\SL^c$-orientation.
	Define the presheaf $K^{\SL^c}\colon \Sch^\op\to \Spc$ by the pullback square
	\[
	\begin{tikzcd}
		K^{\SL^c} \ar{r} \ar{d} & K \ar{d}{\det} \\
		\Pic \ar{r}{2} & \Pic\rlap,
	\end{tikzcd}
	\]
	where $\Pic(X)$ is the groupoid of invertible sheaves on $X$.
	The motivic $\Einfty$-ring spectrum $\MSL^c$ is the Thom spectrum associated with the composition \[K^{\SL^c}\times_{\uZ} \{0\}\to K\to \Pic(\SH),\] in the sense of \cite[Section 16]{norms}. By \cite[Theorem 3.3.10]{deloop3}, there is an equivalence of $\Einfty$-rings
	\[
	\MSL^c\simeq \Sigma^\infty_{\T,\fr}\FSyn^{c}
	\]
	where $\FSyn^c(X)$ is the groupoid of triples $(Z,\sL,\lambda)$ with $Z$ a finite syntomic $X$-scheme, $\sL$ an invertible sheaf on $Z$, and $\lambda\colon \omega_{Z/X}\simeq \sL^{\otimes 2}$. There is a map $\FSyn^c\to\Vect^\sym$ sending $(Z,\sL,\lambda)$ to $f_*(\sL)$, where $f\colon Z\to X$ is the structure map, with the symmetric bilinear form
	\[
	f_*(\sL) \otimes f_*(\sL) \to f_*(\sL^{\otimes 2}) \stackrel\lambda\simeq f_*(\omega_{Z/X}) \to\sO_X.
	\]
	This induces an $\Einfty$-$\SL^c$-orientation $\MSL^c\to\KQ$ refining the $\SL$-orientation of Remark~\ref{rem:SL-orientation}.
\end{rem}

Combining Proposition~\ref{prop:KQ-kq-Vect^bil} and Theorem~\ref{thm:main-gp}, we obtain a description of the motivic hermitian K-theory spectrum in terms of oriented Gorenstein algebras:

\begin{thm}\label{thm:KQ-kq-FGor}
	Assume $2\in\sO(S)^\times$.
	\begin{enumerate}
		\item There is an equivalence of $\Einfty$-ring spectra 
\[\KQ_S\simeq (\Sigma^\infty_{\T,\fr}\FGor^\o_S)[\betah^{-1}],\]
where $\betah\in\pi_{8,4}\Sigma^\infty_{\T,\fr}\FGor^\o_S$ is transported through the equivalence of Theorem~\ref{thm:main-gp}.
\item If $S$ is regular over a field, there is an equivalence of $\Einfty$-ring spectra
\[
\kq_S \simeq \Sigma^\infty_{\T,\fr} \FGor^\o_S.
\]
	\end{enumerate}
\end{thm}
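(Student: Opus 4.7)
The plan is to deduce both parts directly from Proposition~\ref{prop:KQ-kq-Vect^bil} and Theorem~\ref{thm:main-gp}, the key link being the forgetful map $\eta\colon \FGor^\o_S \to \Vect^\sym_S$. Recall that $\eta$ was promoted (in the discussion preceding Lemma~\ref{lem:det(L)}) to a morphism of $\Einfty$-ring objects in $\Pre_\Sigma(\Span^\fr(\Sch_S))$, via the forgetful functor $\Span^\fr\to\Span^{\fgor,\o}$ induced by the determinant. Applying the symmetric monoidal left adjoint $\Sigma^\infty_{\T,\fr}$ then produces a morphism of $\Einfty$-ring spectra
\[
\Sigma^\infty_{\T,\fr}\eta\colon \Sigma^\infty_{\T,\fr}\FGor^\o_S \longrightarrow \Sigma^\infty_{\T,\fr}\Vect^\sym_S
\]
in $\SH(S)$, and the heart of the proof is to show that this map is an equivalence.

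For this, I would observe that the right adjoint $\Omega^\infty_\T$ lands in grouplike $\Einfty$-spaces, so $\Sigma^\infty_{\T,\fr}$ inverts the objectwise group completion, and $\Sigma^\infty_{\T,\fr}\eta$ is canonically identified with $\Sigma^\infty_{\T,\fr}\eta^\gp$. By Theorem~\ref{thm:main-gp}, $\eta^\gp$ is an $\A^1$-equivalence of presheaves on $\Sch$; restricted to $\Sm_S$, and using that $\FGor^{\o,\gp}$ and $\Vect^{\sym,\gp}$ are (after Nisnevich sheafification) smooth algebraic stacks with affine diagonal, this $\A^1$-equivalence on affine schemes extends to a motivic equivalence on $\Sm_S$ in the style of \cite[Proposition A.0.4]{deloop3}. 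Hence $\Sigma^\infty_{\T,\fr}\eta$ is an equivalence of $\Einfty$-ring spectra.

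Part (1) follows by inverting $\betah$ on both sides, which preserves equivalences in $\Einfty$-ring spectra, and combining with Proposition~\ref{prop:KQ-kq-Vect^bil}(1); the element $\betah \in \pi_{8,4}\Sigma^\infty_{\T,\fr}\FGor^\o_S$ is by definition the preimage of the hermitian Bott element under the equivalence, which makes this step tautological. Part (2) follows even more directly by combining the equivalence $\Sigma^\infty_{\T,\fr}\FGor^\o_S \simeq \Sigma^\infty_{\T,\fr}\Vect^\sym_S$ with Proposition~\ref{prop:KQ-kq-Vect^bil}(2), under the stronger hypothesis that $S$ is regular over a field. The substantive content lies entirely in Theorem~\ref{thm:main-gp}; no serious obstacle arises in the present theorem, since compatibility with the $\Einfty$-ring structures is automatic from the construction of $\eta$ in the framed setting, and compatibility with base change is built into both sides (Lemma~\ref{lem:KQ-bc} on one side, \cite[Lemma 16]{framed-loc} on the other).
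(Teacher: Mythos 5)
Your proposal is correct and takes essentially the same route as the paper: Theorem~\ref{thm:KQ-kq-FGor} is obtained there precisely by combining Proposition~\ref{prop:KQ-kq-Vect^bil} with Theorem~\ref{thm:main-gp}, the implicit point being exactly yours, namely that $\Sigma^\infty_{\T,\fr}$ applied to the framed $\Einfty$-map $\eta$ factors through $\A^1$-localization and group completion, where $\eta^\gp$ is an equivalence. One small remark: Theorem~\ref{thm:main-gp} already gives the $\A^1$-equivalence objectwise on all schemes, so your detour through affine schemes and the algebraic-stack argument in the style of \cite[Proposition A.0.4]{deloop3} is unnecessary (and the group completions are in any case no longer groupoid-valued stacks); simply restricting the $\A^1$-equivalence to $\Sm_S$ suffices.
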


\section{The Milnor--Witt motivic cohomology spectrum}\label{sec:MW}

Let $S$ be a scheme that is pro-smooth over a field or over a Dedekind scheme (i.e., $S$ is locally a cofiltered limit of smooth schemes; for example, by Popescu's theorem~\cite[Tag 07GC]{stacks}, $S$ is regular over a field). The \emph{Milnor–Witt motivic cohomology spectrum} $\hzmw_S\in\SH(S)$ is defined by 
\[\hzmw_S=\underline\pi^\eff_0(\MonUnit_S),\]
where $\underline\pi^\eff_*$ are the homotopy groups in the effective homotopy $t$-structure on $\SH^\eff(S)$ (whose connective part is the subcategory $\SH^\veff(S)$ of very effective motivic spectra).
This definition is due to Bachmann \cite{BachmannSlices,BachmannDedekind}, and it is known to be equivalent to the original definition of Calmès and Fasel over a perfect field of characteristic not $2$~\cite{BachmannFasel}.
Note that $\hzmw_S$ is an $\Einfty$-ring spectrum (even a normed spectrum, see \cite[Proposition 13.3]{norms}) that is stable under pro-smooth base change.

For $S$ as above, we define a presheaf of rings $\uGW_S$ on $\Sm_S$ in two cases:
\begin{enumerate}
	\item if $S$ is over a perfect field $k$, $\uGW_S$ is the Zariski sheafification of the left Kan extension of the sheaf of unramified Grothendieck–Witt rings over $\Sm_k$ defined by Morel \cite[\sectsign 3.2]{Morel};
	\item if $2\in\sO(S)^\times$, $\uGW_S$ is the Nisnevich sheafification of the presheaf of Grothendieck–Witt rings, i.e., $\uGW_S=L_\Nis\pi_0\Vect_S^{\sym,\gp}$.
\end{enumerate}
Note that these two definitions agree when they both apply (see the proof of \cite[Theorem 10.12]{norms}).
We can promote $\uGW_S$ to a commutative monoid in presheaves with framed transfers as follows.
In case (1), since $\uGW=(\underline{K}_3^{MW})_{-3}$, \cite[Theorem 5.19]{BachmannYakerson} implies that $\uGW_k$ admits a unique structure of presheaf with framed transfers compatible with its $\uGW_k$-module structure and extending Morel's transfers for monogenic field extensions \cite[\sectsign 4.2]{Morel}. In case (2), $\uGW_S$ inherits oriented finite Gorenstein transfers from $\Vect^\sym$. The uniqueness of the framed transfers in (1) implies that they agree with those in (2) when $2$ is invertible.

\begin{rem}
	Over a field of characteristic $2$, there is a canonical epimorphism of Nisnevich sheaves $L_\Nis\pi_0\Vect^{\sym,\gp}\to\uGW$, but we do not know if it is an isomorphism (see \cite[Remark 4.7]{deloop4}).
\end{rem}

\begin{lem}\label{lem:hzmw-transfers}
	Let $S$ be pro-smooth over a field or a Dedekind scheme. If $S$ has mixed characteristic, assume also that $2\in\sO(S)^\times$. Then there is an equivalence $\Omega^\infty_{\T,\fr}\hzmw_S\simeq \uGW_S$ of commutative monoids in $\Pre_\Sigma(\Span^\fr(\Sm_S))$.
\end{lem}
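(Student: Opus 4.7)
The plan is to split the proof into two parts: first verify the equivalence of underlying presheaves, then upgrade it to an equivalence of commutative monoids in $\Pre_\Sigma(\Span^\fr(\Sm_S))$, reducing the transfer-compatibility to a perfect field via pro-smooth base change and applying Bachmann--Yakerson's uniqueness.

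For the underlying presheaf, the definition $\hzmw_S = \underline\pi^\eff_0(\MonUnit_S)$ together with the fact that $\Omega^\infty_\T$ intertwines the effective homotopy $t$-structure with Nisnevich sheafification of the ordinary $\pi_0$ on motivic spaces gives
\[
\Omega^\infty_\T\hzmw_S \simeq L_\Nis\,\pi_0\,\Omega^\infty_\T\MonUnit_S.
\]
In case (1), Morel's theorem (extended to Dedekind bases by Bachmann) identifies this Nisnevich sheaf with the unramified Grothendieck--Witt sheaf $\uGW_S$. In case (2) with $2$ invertible, one uses instead the Nisnevich sheafified unit $\MonUnit_S\to \kq_S$ and Proposition~\ref{prop:KQ-kq-Vect^bil} to identify $\pi_0\Omega^\infty_\T\MonUnit_S$ after Nisnevich sheafification with $L_\Nis\pi_0\Vect^{\sym,\gp}_S$, which by definition is $\uGW_S$ in that case.

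For compatibility with framed transfers, observe that $\Omega^\infty_{\T,\fr}\hzmw_S$ carries framed transfers canonically through the reconstruction equivalence $\SH^\fr(S)\simeq \SH(S)$, and is a commutative monoid since $\hzmw_S$ is an $\Einfty$-ring. Both $\hzmw$ (as a normed spectrum, stable under pro-smooth base change by \cite[Proposition~13.3]{norms}) and $\uGW$ (by construction) are compatible with pro-smooth base change, as is the functor $\Omega^\infty_{\T,\fr}$ on framed motivic spectra. It therefore suffices to check the equivalence when $S = \Spec k$ is a perfect field, in which case one reduces to case (1). Here Bachmann--Yakerson's Theorem 5.19, already invoked in the construction of the framed transfers on $\uGW_k$, says that $\uGW_k = (\underline K_3^{MW})_{-3}$ admits at most one framed-transfer structure compatible with its $\uGW_k$-module structure and extending Morel's transfers for monogenic field extensions. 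The presheaf $\Omega^\infty_{\T,\fr}\hzmw_k$ is tautologically a $\uGW_k$-module (its $\Einfty$-structure factors through the $\Einfty$-ring $\hzmw_k$), and the monogenic-extension transfers it carries agree with Morel's because the latter are by construction the framed transfers on $\pi_0$ of the motivic sphere, through which the transfers on any effective truncation of $\MonUnit$ factor. The resulting identification of $\uGW_k$-modules with framed transfers is moreover an identification of $\Einfty$-rings, as the $\Einfty$-structure on $\uGW_k$ is uniquely pinned down by the ring structure on $\pi_0$.

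The main obstacle is making precise the last matching of monogenic-extension transfers: one needs that the transfers on $\Omega^\infty_{\T,\fr}\hzmw_k$ arising from the framed structure coincide, at the level of $\pi_0$ sheafifications on residue fields, with Morel's generators of the framed transfers on $\uGW_k$. This ultimately follows from the fact that both are computed from framed correspondences in $\SH(k)$, but spelling out this identification is the technical heart of the argument; once it is in hand, Bachmann--Yakerson's uniqueness does the rest.
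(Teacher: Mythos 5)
Your overall architecture (identify the underlying sheaf, then compare framed transfers, reducing via base change and invoking Bachmann--Yakerson uniqueness) matches the paper's strategy for the \emph{equicharacteristic} case, but there are two genuine gaps. The serious one is your reduction ``it therefore suffices to check the equivalence when $S=\Spec k$ is a perfect field, in which case one reduces to case (1)'': pro-smooth base change only reduces $S$ to the base of its pro-smooth structure, and when $S$ has mixed characteristic that base is a Dedekind scheme, not a field -- a mixed-characteristic Dedekind domain such as $\Z[\tfrac12]$ is not pro-smooth over any field, so case (2) cannot be folded into case (1). The paper treats this case by a separate argument: it invokes Bachmann's Dedekind-domain results (\cite[Definition 4.1 and Corollary 4.9]{BachmannDedekind}) to produce the isomorphism $\Omega^\infty_\T\hzmw_S\simeq\uGW_S$ fitting into a square with the unit $\Vect^{\sym,\gp}_S\to\Omega^\infty_\T\kq^\fr_S$, and then transports the framed structure across that square using that $\kq^\fr_S\to\underline\pi^\eff_0\kq^\fr_S\simeq\hzmw_S$ is a morphism of framed spectra together with \cite[Proposition 3.2.14]{deloop1}. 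Relatedly, your case (2) identification of the underlying presheaf (that the unit $\MonUnit_S\to\kq_S$ induces an isomorphism on $L_\Nis\pi_0\Omega^\infty_\T$ over such $S$, and that $\Omega^\infty_\T$ interchanges $\underline\pi^\eff_0$ with $L_\Nis\pi_0$) is not a formal fact; over a perfect field it is \cite[Proposition 4(3)]{BachmannSlices}, and over Dedekind schemes it is precisely part of what \cite{BachmannDedekind} proves, so it cannot be asserted without those inputs.

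The second gap is the one you flag yourself but do not close: the claim that Morel's transfers for monogenic field extensions are ``by construction the framed transfers on $\pi_0$ of the motivic sphere'' is not correct as stated -- Morel's transfers are defined by explicit cohomological/algebraic formulas, and their agreement with the framed (Voevodsky) transfers on $\underline\pi_0(\MonUnit)_0\simeq\uGW$ is a nontrivial comparison. The paper resolves exactly this point by citing \cite[Corollary 5.17]{BachmannYakerson} to reduce the comparison to monogenic field extensions and then \cite[Proposition 4.3.17]{deloop2} for the actual matching; without that (or an equivalent computation), the appeal to the uniqueness statement \cite[Theorem 5.19]{BachmannYakerson} does not get off the ground, since you must first know that the transfers carried by $\Omega^\infty_{\T,\fr}\hzmw_k$ restrict to Morel's on monogenic extensions.
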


\begin{proof}
	In the equicharacteristic case, it suffices to prove the result over a perfect field. 
	By \cite[Proposition~4(3)]{BachmannSlices}, $\hzmw$ is the effective cover of $\underline\pi_0(\1)_*$, and hence we have isomorphisms of rings $\Omega^\infty_\T\hzmw\simeq\underline\pi_0(\1)_0\simeq \uGW$. It remains to compare the framed transfers on either side. By \cite[Corollary 5.17]{BachmannYakerson}, it suffices to compare the transfers induced by a monogenic field extension $K\subset L$ with chosen generator $a\in L$. This was done in the proof of \cite[Proposition 4.3.17]{deloop2}.
	
	Assume now that $2\in\sO(S)^\times$. By \cite[Definition 4.1 and Corollary 4.9]{BachmannDedekind}, we have an isomorphism $\Omega^\infty_{\T}\hzmw_S\simeq \uGW_S$ such that the following square commutes, where $\kq^\fr_S=\Sigma^\infty_{\T,\fr}\Vect^{\sym,\gp}_S$:
	\[
	\begin{tikzcd}
		\Vect^{\sym,\gp}_S \ar{r}{\mathrm{unit}} \ar{d} & \Omega^\infty_{\T}\kq^\fr_S \ar{d} \\
		\uGW_S \ar{r}{\simeq} & \Omega^\infty_{\T}\hzmw_S\rlap.
	\end{tikzcd}
	\]
	The top and left arrows are morphisms of presheaves with framed transfers by definition, and so is the right vertical arrow since it is $\Omega^\infty_{\T}$ of the morphism $\kq_S^\fr\to \underline\pi^\eff_0\kq_S^\fr\simeq \hzmw_S$. Using the compatibility of the Nisnevich topology with framed transfers \cite[Proposition 3.2.14]{deloop1}, we deduce that the bottom isomorphism is compatible with framed transfers.
\end{proof}

The next theorem is the analogue of \cite[Theorem 21]{framed-loc} for Milnor–Witt motivic cohomology. We are grateful to Tom Bachmann for providing the rigidity argument in the mixed characteristic case.

\begin{thm}\label{thm:HtildeZ}
Let $S$ be pro-smooth over a field or a Dedekind scheme. If $S$ has mixed characteristic, assume also that $2\in\sO(S)^\times$. Then there is an equivalence of motivic $\Einfty$-ring spectra $\hzmw_S \simeq  \Sigma^\infty_{\T,\fr} \uGW_S$ in $\SH(S)$.
\end{thm}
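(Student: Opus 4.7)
The plan is to deduce the theorem from the motivic recognition principle \cite[Theorem 3.5.14]{deloop1}, which asserts that every very effective motivic spectrum $E$ over a perfect field satisfies $E\simeq \Sigma^\infty_{\T,\fr}\Omega^\infty_{\T,\fr}E$. This template has already been executed for $\hz$ in \cite[Theorem~21]{framed-loc}, and we follow the same outline. The key input on the loop-space side is \lemref{lem:hzmw-transfers}, which identifies $\Omega^\infty_{\T,\fr}\hzmw_S\simeq \uGW_S$ as commutative monoids in presheaves with framed transfers. Once this identification and the recognition principle are both available over a perfect base, the result is immediate.

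First I would reduce to the case where $S$ is a perfect field or a prime Dedekind scheme (in mixed characteristic, $\Spec\Z_{(p)}[\tfrac12]$). Both sides of the desired equivalence are stable under pro-smooth base change: $\hzmw$ by its construction as $\underline\pi_0^\eff(\1)$, since $\SH^\eff$ and the effective t-structure are compatible with pro-smooth pullback (see \cite{BachmannDedekind}); the framed suspension spectrum $\Sigma^\infty_{\T,\fr}\uGW$ by \cite[Lemma~16]{framed-loc} together with the stability of $\uGW$ under pro-smooth base change (Morel's sheaf in equicharacteristic, and \cite[Corollary~4.9]{BachmannDedekind} over $\Z[\tfrac12]$). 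In the equicharacteristic case this lets us pass to the perfect closure.

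Over a perfect field $k$, the spectrum $\hzmw_k$ is in the heart of the effective homotopy $t$-structure, hence very effective. Applying the recognition principle gives
\[
\hzmw_k \simeq \Sigma^\infty_{\T,\fr}\Omega^\infty_{\T,\fr}\hzmw_k,
\]
and \lemref{lem:hzmw-transfers} identifies the right-hand side with $\Sigma^\infty_{\T,\fr}\uGW_k$. This equivalence upgrades to $\Einfty$-rings because $\Sigma^\infty_{\T,\fr}$ is symmetric monoidal (framed correspondences being the universal choice of motivic transfers, cf.~\cite{framed-loc}), $\Omega^\infty_{\T,\fr}$ preserves $\Einfty$-algebras, and both constructions carry $\hzmw$ and $\uGW$ with their natural $\Einfty$-structures (the former being even a normed spectrum by \cite[Proposition~13.3]{norms}). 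This handles the equicharacteristic case.

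The mixed characteristic case is the main obstacle, since the recognition principle of \cite{deloop1} is formulated over a perfect field. Following the strategy credited to Bachmann in the statement preceding \thmref{thm:HtildeZ}, I would construct the comparison map $\Sigma^\infty_{\T,\fr}\uGW_S\to \hzmw_S$ (adjoint under $\Sigma^\infty_{\T,\fr}\dashv \Omega^\infty_{\T,\fr}$ to the isomorphism of \lemref{lem:hzmw-transfers}) globally over $\Spec\Z[\tfrac12]$, and then verify that it is an equivalence after pullback to each residue field, using the equicharacteristic case already established and the fact that $\Omega^\infty_{\T,\fr}$ detects equivalences between very effective spectra. The rigidity input needed here is that maps between very effective motivic spectra over a Dedekind scheme are determined by their restrictions to the generic and closed points, which is exactly the content of the effective $t$-structure computations in \cite{BachmannDedekind} that were invoked in the proof of \lemref{lem:hzmw-transfers}. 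Combining these steps gives the equivalence of $\Einfty$-ring spectra.
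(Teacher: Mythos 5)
Your equicharacteristic argument coincides with the paper's: reduce by pro-smooth base change to a perfect field, observe that $\hzmw$ is very effective, apply the motivic recognition principle, and feed in \lemref{lem:hzmw-transfers}; that part is fine. The mixed-characteristic half, however, has a genuine gap. The strategy ``check the comparison map after pullback to the residue fields'' needs, besides conservativity of the family $s^*$, $s\in S$ (which is \cite[Proposition B.3]{norms}, not an effective $t$-structure computation, and has nothing to do with $\Omega^\infty_{\T,\fr}$ detecting equivalences), the statement that \emph{both} sides are stable under base change to the residue fields, so that $s^*$ of the comparison map is identified with the comparison map over $\kappa(s)$ and the field case can be invoked. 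Base change to a closed point of a Dedekind domain is not pro-smooth, so your earlier pro-smooth base-change remarks do not cover this. For $\hzmw_S$ the required statement is \cite[Theorem 4.4]{BachmannDedekind}. For $\Sigma^\infty_{\T,\fr}\uGW_S$ it is precisely the hard point of the mixed-characteristic case: by \cite[Lemma 16]{framed-loc} it reduces to showing that $s^*(\uGW_S)\to\uGW_{\kappa(s)}$ is a motivic equivalence of presheaves on $\Sm_{\kappa(s)}$, and the paper proves this using $\uGW\simeq\uW\times_{\uZ/2}\uZ$, rigidity of Witt groups for henselian local $\Z[\tfrac 12]$-algebras \cite[Lemma 4.1]{Jacobson}, and Lemma~\ref{lem:rigidity}. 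Your ``rigidity input''---that maps of very effective spectra over a Dedekind scheme are detected on the generic and closed points---is only a conservativity statement; it does not supply this base-change compatibility, and nothing else in your outline does. This is the content of the ``rigidity argument in the mixed characteristic case'' that the theorem actually hinges on.

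A smaller but real defect is the opening reduction: a general Dedekind scheme (the ring of integers of a ramified number field, or a ramified extension of $\Z_p$) is not pro-smooth over $\Spec\Z_{(p)}[\tfrac 12]$, so you cannot reduce to that case by pro-smooth base change. This is harmless only because the residue-field argument, once completed as above, works directly over an arbitrary Dedekind domain with $2$ invertible, which is how the paper proceeds.
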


\begin{proof}
	The equivalence $\Omega^\infty_{\T,\fr}\hzmw_S\simeq \uGW_S$ of Lemma~\ref{lem:hzmw-transfers} induces a canonical $\Einfty$-map
	\[
	\Sigma^\infty_{\T,\fr}\uGW_S\to \hzmw_S
	\]
	in $\SH(S)$. Since both sides are compatible with pro-smooth base change, it is enough to show that it is an equivalence when $S$ is a perfect field or a Dedekind domain. In the former case, the result follows directly from the motivic recognition principle \cite[Theorem~3.5.14]{deloop1}, since $\hzmw_S$ is very effective. 
	Let us therefore assume that $S$ is a Dedekind domain with $2\in\sO(S)^\times$. By \cite[Proposition B.3]{norms}, it suffices to show that both $\Sigma^\infty_{\T,\fr}\uGW_S$ and $\hzmw_S$ are stable under base change to the residue fields. For $\hzmw_S$, this holds by \cite[Theorem 4.4]{BachmannDedekind}. For $\Sigma^\infty_{\T,\fr}\uGW_S$, in view of \cite[Lemma 16]{framed-loc}, it suffices to show that the canonical map $s^*(\uGW_S)\to \uGW_{\kappa(s)}$ in $\Pre(\Sm_{\kappa(s)})$ is a motivic equivalence for all $s\in S$.
	Since $\uGW\simeq\uW\times_{\uZ/2}\uZ$ and Witt groups satisfy rigidity for henselian local $\Z[\tfrac 12]$-algebras \cite[Lemma 4.1]{Jacobson}, this follows from Lemma~\ref{lem:rigidity} below. 
\end{proof}

\begin{rem}
	The proof of Theorem~\ref{thm:HtildeZ} shows that $\hzmw_S \simeq  \Sigma^\infty_{\T,\fr} \uGW_S$ for any $\Z[\tfrac 12]$-scheme $S$, if one defines $\hzmw_S$ by base change from $\Spec\Z$ and $\uGW_S$ as $L_\Nis\pi_0\Vect^{\sym,\gp}_S$.
\end{rem}

We say that a presheaf $\sF\in \Pre(\Sch_S)$ is \emph{finitary} if it transforms limits of cofiltered diagrams of qcqs schemes with affine transition maps into colimits. We say that $\sF$ \emph{satisfies rigidity} for a henselian pair $(A,I)$ if the map $\sF(A)\to \sF(A/I)$ is an equivalence.

\begin{lem}[Bachmann]
	\label{lem:rigidity}
	Let $\sF\in\Pre(\Sch_S)$ be a finitary presheaf satisfying rigidity for all pairs $(A,I)$ where $A$ is an essentially smooth henselian local ring over $S$.
	Suppose either that $S$ is locally of finite Krull dimension or that $\sF$ is truncated.
	For $X\in\Sch_S$, denote by $\sF_X$ the restriction of $\sF$ to $\Sm_X$.
	Then, for every morphism $f\colon Y\to X$ in $\Sch_S$, the canonical map
	\[
	f^*(\sF_X) \to \sF_Y
	\]
	in $\Pre(\Sm_Y)$ is a motivic equivalence.
\end{lem}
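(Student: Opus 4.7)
The plan is to detect the motivic equivalence on essentially smooth henselian local stalks, and then use rigidity on both sides to identify them with $\sF$ evaluated at the residue field.

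Under either hypothesis—$S$ locally of finite Krull dimension, or $\sF$ truncated—the Nisnevich $\infty$-topos on $\Sm_Y$ has bounded cohomological dimension on the relevant objects, so that Postnikov towers of $L_\mot$ converge and a map of finitary presheaves on $\Sm_Y$ is a motivic equivalence as soon as it is an equivalence on sections over every essentially smooth henselian local $Y$-scheme $V$. I would therefore reduce to showing that for such $V=\Spec A$ with residue field $\kappa$, the canonical map $(f^\ast\sF_X)(V)\to \sF(V)$ is an equivalence.

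The right-hand side is handled immediately: since $A$ is essentially smooth henselian local over $S$, the pair $(A,\mathfrak m_A)$ satisfies the rigidity hypothesis, giving $\sF(V)\simeq \sF(\kappa)$. For the left-hand side, I would unravel the left Kan extension as
\[
(f^\ast\sF_X)(V)\;=\;\colim_{(W,\varphi)}\sF(W),
\]
the colimit running over smooth $X$-schemes $W$ equipped with an $X$-morphism $\varphi\colon V\to W$, and construct a cofinal subsystem by spreading out. Concretely, let $v\in V$ be the closed point, $x\in X$ its image; Popescu approximation presents $\sO_{X,x}^h$ as a filtered colimit of smooth $\sO_{X,x}$-algebras, and the composite $V\to \Spec\sO_{X,x}^h$ factors through a cofinal system of affine étale $X$-neighborhoods of $x$. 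Applying the finitary property of $\sF$ together with rigidity at $\sO_{X,x}^h$ collapses the resulting colimit to $\sF(\kappa(x))$, and then a further approximation using a transcendence basis of $\kappa/\kappa(x)$ (using $\A^1$-invariance after motivic localization on the smooth $X$-schemes $\mathbf{A}^n_W$) identifies the value with $\sF(\kappa)$, matching the right-hand side compatibly with the canonical map.

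The main obstacle is the cofinality claim in the last step: since $V$ is essentially smooth over $Y$ but not over $X$, one cannot feed $V$ directly into Popescu relative to $X$. Instead, one must factor through $\sO_{X,x}^h\subset \sO_{V,v}^h$, spread étale neighborhood data from $Y$ to $X$, and verify that the resulting system of smooth $X$-approximations is cofinal in the indexing category of the left Kan extension. Once cofinality is in hand, rigidity and finitariness combine straightforwardly to give $\sF(\kappa)$ on both sides.
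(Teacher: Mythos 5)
Your plan is to verify the statement on Nisnevich stalks: compute $(f^*\sF_X)(V)$ and $\sF_Y(V)$ at each essentially smooth henselian local $V$ over $Y$, and use rigidity to identify both with $\sF$ at the residue field. This would indeed prove the lemma \emph{if} it worked, because equivalence on stalks gives a Nisnevich-local equivalence, which is in particular a motivic equivalence. But that is precisely the problem: you would be proving the far stronger statement that $f^*(\sF_X)\to\sF_Y$ is a \emph{Nisnevich-local} equivalence, which is false. The left Kan extension $(f^*\sF_X)(V)$ is a colimit of $\sF(W)$ over \emph{all} smooth $X$-schemes $W$ receiving a map from $V$, not merely over étale (Nisnevich) approximations of $V$; in particular it picks up contributions from $\A^n_W\to W$ and other non-étale smooth morphisms, and since $\sF$ is not assumed $\A^1$-invariant these do not collapse. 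Your attempt to invoke ``$\A^1$-invariance after motivic localization'' in the middle of the colimit computation is where the logic breaks: you are computing the value of the \emph{presheaf} $f^*\sF_X$ at $V$, not the value of its motivic localization, and motivic localization does not commute with evaluation at a point, so the colimit simply does not simplify in the way you need.

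There is a second gap: you only invoke rigidity for the pair $(A,\mathfrak m_A)$, whereas the hypothesis grants rigidity for all henselian pairs $(A,I)$ with $A$ essentially smooth henselian local over $S$, and the latter is essential. The paper's proof uses rigidity for the pair $(\sO(X), I)$ where $I$ cuts out a closed subscheme $Z$ of $S$ — not just the maximal ideal.

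The paper's actual argument is structurally different and is the standard route: after reducing by $2$-out-of-$3$ to the case $X=S$ and then (via a closed embedding of $Y$ into some $\A^n_S$) to a closed immersion $i\colon Z\hook S$ with open complement $j\colon U\hook S$, one considers the square with corners $j_\sharp\sF_U$, $\sF_S$, $U$, and $i_*\sF_Z$. This square can be checked to be a Nisnevich-local pushout by computing stalks at essentially smooth henselian local $X$ over $S$, which is exactly where rigidity for $(\sO(X),I)$ enters. On the other hand, the Morel--Voevodsky localization theorem says the corresponding square with bottom-right corner $i_*i^*\sF_S$ is \emph{motivically} cocartesian. Comparing the two squares and using that $i_*$ is fully faithful and commutes with $L_\mot$ yields the motivic equivalence $i^*\sF_S\to\sF_Z$. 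The $\A^1$-localization is thus not fed into a stalk computation but enters through the localization theorem, which is the indispensable ingredient your approach omits.
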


\begin{proof}
	Note that the statement depends only on the Nisnevich sheafification of $\sF$, so we are free to assume that $\sF(\emptyset)=*$.
	By 2-out-of-3, we can assume $X=S$.	Since the question is local on $Y$ and $\sF$ is finitary, we can further assume that $Y$ is a closed subscheme of $\A^n_S$. Replacing $S$ by $\A^n_S$, we see that it suffices to prove the result for a closed immersion $i\colon Z\hook S$. Let $j\colon U\hook S$ be its open complement and consider the commutative square
	\[
	\begin{tikzcd}
		j_\sharp\sF_U \ar{r} \ar{d} & \sF_S \ar{d} \\
		U \ar{r} & i_*\sF_Z
	\end{tikzcd}
	\]
	in $\Pre(\Sm_S)$. We claim that this square is a Nisnevich-local pushout square.
	If $\sF$ is truncated, this is a square of truncated presheaves, so the claim can be checked on stalks; the same is true if $S$ is locally of finite dimension by \cite[Corollary 3.27]{ClausenMathew}. Using that $\sF$ is finitary, the stalks over an essentially smooth henselian local scheme $X$ are given by
	\[
	\begin{tikzcd}
		\sF(X) \ar{r} \ar{d} & \sF(X) \ar{d} \\
		* \ar{r} & \sF(\emptyset)
	\end{tikzcd}
	\quad\text{or}\quad
	\begin{tikzcd}
		\emptyset \ar{r} \ar{d} & \sF(X) \ar{d} \\
		\emptyset \ar{r} & \sF(X_Z)\rlap,
	\end{tikzcd}
	\]
	depending on whether $X_Z$ is empty or not. Since $\sF$ is rigid and satisfies $\sF(\emptyset)=*$, this proves the claim. On the other hand, by the Morel–Voevodsky localization theorem, the square
	\[
	\begin{tikzcd}
		j_\sharp\sF_U \ar{r} \ar{d} & \sF_S \ar{d} \\
		U \ar{r} & i_*i^*\sF_S
	\end{tikzcd}
	\]
	is motivically cocartesian (see \cite[\S3 Theorem 2.21]{MV} or \cite[Corollary 5]{framed-loc}). It follows that the canonical map $i_*i^*\sF_S\to i_*\sF_Z$ is a motivic equivalence. Since $i_*$ commutes with $L_\mot$ for presheaves satisfying $\sF(\emptyset)=*$ and since $i_*\colon \H(Z)\to \H(S)$ is fully faithful, we deduce that $i^*\sF_S\to\sF_Z$ is a motivic equivalence, as desired.
\end{proof}

\section{Modules over hermitian K-theory}

We begin with a straightforward adaptation of Bachmann's cancellation theorem for finite flat correspondences \cite{BachmannFFlatCancellation} to Gorenstein and oriented Gorenstein correspondences.

Let $k$ be a perfect field and $\sC$ be a motivic $\infty$-category of correspondences over $k$, in the sense of \cite[Definition 2.1]{BachmannFFlatCancellation} (our primary interest is the example $\sC = \Span^{\fgor,\o}(\Sm_k)$). 
We denote by $\h^\sC(X)\in \Pre_\Sigma(\sC)$ the presheaf represented by a smooth $k$-scheme $X$, by $\H^\sC(k)\subset \Pre_\Sigma(\sC)$ the full subcategory of $\A^1$-invariant Nisnevich sheaves, and by $\SH^\sC(k)$ the $\infty$-category of $\T$-spectra in $\H^\sC(k)_*$. 

Recall that:
\begin{itemize}
	\item $\sC$ \emph{satisfies cancellation} if the functor $\h^\sC(\G_m,1)\otimes(-)\colon \H^\sC(k)^\gp\to \H^\sC(k)^\gp$ is fully faithful;
	\item $\sC$ \emph{satisfies rational contractibility} if the presheaf $\h^\sC((\G_m,1)^{\wedge n})^\gp$ on $\Sm_k$ is rationally contractible for all $n\geq 1$.
\end{itemize}

The following result is only new in the cases of Gorenstein and oriented Gorenstein correspondences, but the same proof works in all cases.

\begin{prop}\label{prop: cancellation + rat contract}
	Let $k$ be a perfect field. Let $\sC$ be the $\infty$-category of smooth $k$-schemes and correspondences of one of the following types:
	\begin{enumerate}
		\item finite flat;
		\item finite Gorenstein;
		\item finite syntomic;
		\item oriented finite Gorenstein;
		\item oriented finite syntomic;
		\item framed finite syntomic.
	\end{enumerate}
	Then $\sC$ satisfies cancellation and rational contractibility.
\end{prop}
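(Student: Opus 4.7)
The plan is to verify the hypotheses of Bachmann's axiomatic framework in \cite{BachmannFFlatCancellation}, which simultaneously establishes cancellation and rational contractibility for a motivic $\infty$-category of correspondences satisfying suitable conditions, in each of the six cases listed. Case (1) is Bachmann's original theorem, case (6) is \cite[\S3.5]{deloop1}, and cases (3) and (5) will reduce to (1) and (4) respectively, by observing that $\FSyn \subset \FFlat$ is an open substack whose pullback along $\FGor^\o \to \FFlat$ is $\FSyn^\o$. Hence the substantive new content concerns the Gorenstein and oriented Gorenstein cases (2) and (4).

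The key structural facts we exploit are recorded in Remark~\ref{rem:FGoror}: the forgetful map $\FGor \to \FFlat$ is an open immersion (cut out by the invertibility of $\omega_{A/R}$ as an $A$-module), and the forgetful map $\FGor^\o \to \FGor$ is representable by affine morphisms (locally, the open complement of the vanishing locus of $\det B_\varphi$ inside a vector bundle). Bachmann's proof proceeds by constructing explicit $\A^1$-homotopies on the presheaves $\h^\sC((\G_m,1)^{\wedge n})^\gp$ from one-parameter families of finite flat algebras. Since openness is preserved under pullback, these homotopies restrict to $\Span^{\fgor}(\Sm_k)$ provided Bachmann's families are themselves Gorenstein; this will hold automatically because his construction uses only tensor products, base changes, and quotients involving augmented algebras built out of $R[x]/x^2$, each of which preserves the Gorenstein property. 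This handles case (2).

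For case (4), the orientations must additionally be propagated along the homotopies. The canonical isomorphism $\omega_{A \otimes_R B/R} \simeq \omega_{A/R} \otimes_R \omega_{B/R}$ allows trivializations of dualizing modules to compose under tensor products, and the building block $R[x]/x^2$ underlying Bachmann's constructions carries the canonical orientation of Example~\ref{ex:kx/x2}. Consequently, the entire family of homotopies lifts to $\FGor^\o$ once an initial orientation is chosen. The main obstacle will be the bookkeeping needed for rational contractibility: one must trace through the explicit sections that Bachmann constructs over affine charts of $(\G_m)^{\wedge n}$ and check that at each stage both the Gorenstein locus and a compatible orientation are preserved in families. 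We expect this verification to be essentially mechanical, as the authors signal by remarking that \emph{the same proof works in all cases}.
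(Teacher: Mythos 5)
There is a genuine gap, and it sits precisely at the one point where the paper has new content. Your plan is to rerun Bachmann's argument and claim that his constructions stay inside $\Span^{\fgor}(\Sm_k)$, resp.\ $\Span^{\fgor,\o}(\Sm_k)$, because the Gorenstein locus is open in $\FFlat$ and because the construction ``uses only tensor products, base changes, and quotients involving augmented algebras built out of $R[x]/x^2$, each of which preserves the Gorenstein property.'' Neither claim does the job. Openness of $\FGor\subset\FFlat$ (or of $\FSyn\subset\FFlat$) does not let you ``reduce'' cancellation for a subcategory of correspondences to cancellation for the ambient one: the criterion of \cite[Proposition 2.16]{BachmannFFlatCancellation} requires you to produce the maps $\rho$ \emph{within} $\sC$, i.e.\ to check that the new correspondences appearing in Bachmann's homotopies are again of the given type, and openness of a locus in $\FFlat$ says nothing about whether a specific finite flat family lies in it. Moreover, your description of the construction is not accurate: Bachmann's $\rho$'s are built by replacing a span $\G_m\times X\leftarrow Z\to\G_m\times Y$ by the (derived) vanishing loci $Z^\pm_{m,n}\subset\A^1\times Z$ of the explicit functions $t_1^m+1$, $t_1^m+t_2$ and their straight-line homotopies; these are hypersurface sections, not tensor products or square-zero-type quotients, and ``quotients preserve Gorensteinness'' is false in the generality you invoke. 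The actual point --- and the paper flags it as essentially the only new observation --- is that $Z^\pm_{m,n}$ is cut out by a \emph{single} equation, so $Z^\pm_{m,n}\hookrightarrow\A^1\times Z$ is quasi-smooth with trivialized conormal sheaf; combined with the triviality of the cotangent sheaf of $\A^1\times\G_m\times X\to\A^1\times X$, this shows that if $Z\to\G_m\times X$ is syntomic, framed, has invertible dualizing sheaf, or has trivialized dualizing sheaf, then so does $Z^\pm_{m,n}\to\A^1\times X$ (once it is finite flat, which is Bachmann's Corollary 3.4). Your proposal never supplies this mechanism; in particular your route for propagating orientations via $\omega_{A\otimes_RB/R}\simeq\omega_{A/R}\otimes_R\omega_{B/R}$ is not the one that is needed, since the new orientation comes from the trivialized conormal sheaf of the defining equation, not from a tensor product decomposition.

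Two smaller points. First, you do not address condition (3) of Bachmann's criterion, which requires an explicit $\A^1$-homotopy $\rho_2^+(\id_{\G_m})\simeq \id+\rho_2^-(\id_{\G_m})$ between the framed finite syntomic schemes $\Spec k[t]/(t^2+1)$ and $\Spec k[t]/(t^2+t)$; the paper exhibits it as the hypersurface $t^2+st+1-s$ over $\A^1=\Spec k[s]$, and this homotopy must again be seen to carry the relevant structure. Second, your assessment of where the work lies is inverted: rational contractibility is the easy half (Bachmann's section $s\colon\h^{\fflat}(X)\to\hat C_1\h^{\fflat}(X)$ extends verbatim to $\h^\sC(X)$), whereas the cancellation construction is exactly where the preservation of the Gorenstein, orientation, and framing data has to be verified by the conormal-sheaf argument above.
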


\begin{proof}
	The case of finite flat correspondences is \cite[Theorem 3.5 and Proposition 3.8]{BachmannFFlatCancellation}. Essentially the same argument applies to all the other cases.
	
	For cancellation, we use the criterion \cite[Proposition 2.16]{BachmannFFlatCancellation} with $G=\h^\sC(\G_m)$. 
	The object $\h^\sC(\G_m,1)$ is symmetric in $\H^\sC(k)$, because $\H^\sC(k)$ is prestable \cite[Lemma 2.10]{BachmannFFlatCancellation} and $\Sigma(\G_m,1)$ is already symmetric in $\H(k)_*$.
	We then need to construct, for each $Y\in\Sm_k$, a map
	\[
	\rho\colon \Hom(\G_m,L_\mot\h^\sC(\G_m\times Y)^\gp) \to L_\mot\h^\sC(Y)^\gp.
	\]
	in $\H^\sC(k)^\gp$ satisfying some conditions.
	
	For $m,n\geq 0$, we define
	\begin{align*}
		g_m^+\colon \G_m\times\G_m\to\A^1,& \quad g_m^+(t_1,t_2)=t_1^m+1,\\
		g_m^-\colon \G_m\times\G_m\to\A^1,& \quad g_m^+(t_1,t_2)=t_1^m+t_2,\\
		h_{m,n}^\pm\colon \A^1\times\G_m\times\G_m\to\A^1,&\quad h_{m,n}^\pm(s,t_1,t_2) = sg_n^\pm(t_1,t_2)+(1-s)g_m^\pm(t_1,t_2).
	\end{align*}
	(Thus, $h_{m,n}^\pm$ is the straight-line homotopy from $g_m^\pm$ to $g_n^\pm$.)
	Given a span $\G_m\times X\leftarrow Z\to \G_m\times Y$ and $m,n\geq 0$, we let
	$Z_m^{\pm}\subset Z$ and $Z_{m,n}^\pm\subset \A^1\times Z$ be the \emph{derived} vanishing loci of the functions
	\begin{equation*}
	Z\to\G_m\times \G_m \xrightarrow{g_m^\pm} \A^1\quad\text{and}\quad
	\A^1\times Z\to \A^1\times\G_m\times \G_m \xrightarrow{h_{m,n}^\pm} \A^1.
	\end{equation*}
	The fibers of $Z_{m,n}^\pm$ over $0$ and $1$ in $\A^1$ are $Z_m^\pm$ and $Z_n^\pm$, respectively.
	
	By \cite[Corollary 3.4]{BachmannFFlatCancellation}, if $Z$ is finite flat over $\G_m\times X$, then $Z_{m,n}^\pm$ is finite flat over $\A^1\times X$ for $m,n$ large enough. For $i\geq 0$, let 
	 \[
	 F_i^\sC(Y)\subset \Hom(\G_m,\h^\sC(\G_m\times Y))
	 \]
	 be the subpresheaf consisting of spans $\G_m\times X\leftarrow Z\to \G_m\times Y$ such that $Z_{m,n}^+$ and $Z_{m,n}^-$ are finite flat over $\A^1\times X$ for all $n,m\geq i$; this is an exhaustive filtration of $\Hom(\G_m,\h^\sC(\G_m\times X))$ in $\Pre_\Sigma(\sC)$.
	 
	 Since $Z_{m,n}^\pm$ is cut out by a single equation in $\A^1\times Z$, the closed immersion $Z_{m,n}^\pm\hookrightarrow \A^1\times Z$ is quasi-smooth with trivialized conormal sheaf. On the other hand, the projection $\A^1\times\G_m\times X\to \A^1\times X$ is smooth with trivialized cotangent sheaf. Hence, if $Z\to \G_m\times X$ is
	  framed quasi-smooth, oriented quasi-smooth, quasi-smooth, or has trivialized or invertible dualizing sheaf, then the same holds for $Z_{m,n}^\pm\to \A^1\times X$ (this is essentially the only new observation needed compared to the finite flat case).
	For $m,n\geq i$, we can therefore define
	\[
	\rho_{m}^\pm\colon F_i^\sC(Y)\to \h^\sC(Y)
	\quad\text{and}\quad
	\rho_{m,n}^\pm\colon F_i^\sC(Y)\to \h^\sC(Y)^{\A^1}
	\]
	by sending a $\sC$-correspondence $\G_m\times X\leftarrow Z\to\G_m\times Y$ to the $\sC$-correspondences
 $X\leftarrow Z_{m}^\pm\to Y$ and
	 $\A^1\times X\leftarrow Z_{m,n}^\pm\to Y$, respectively. Note that $\rho_{m,n}^\pm$ is an $\A^1$-homotopy between $\rho_m^\pm$ and $\rho_n^\pm$. The morphisms
	 \[
	 \rho_i^\pm\colon F_i^\sC(Y)\to \h^\sC(Y)
	 \]
	 and the $\A^1$-homotopies $\rho_{i,i+1}^\pm$ induce in the colimit a pair of morphisms
	 \[
	 \rho^\pm\colon \Hom(\G_m,\h^\sC(\G_m\times Y))\to \Lhtp \h^\sC(Y).
	 \]
	 We let
	 \[
	 \rho =\rho^+-\rho^-\colon \Hom(\G_m,\h^\sC(\G_m\times Y))\to \Lhtp \h^\sC(Y)^\gp.
	 \]
	 By \cite[Proposition 2.8]{BachmannFFlatCancellation}, the canonical map
	 \[
	 \Hom(\G_m,\h^\sC(\G_m\times Y))^\gp \to \Hom(\G_m,L_\mot\h^\sC(\G_m\times Y)^\gp)
	 \]
	 is a motivic equivalence, so we obtain an induced morphism
	 \[
	 \rho\colon \Hom(\G_m,L_\mot\h^\sC(\G_m\times Y)^\gp)\to L_\mot \h^\sC(Y)^\gp.
	 \]
	 
	 We now check that $\rho$ satisfies conditions (1)–(3) of \cite[Proposition 2.16]{BachmannFFlatCancellation}.
	 Conditions (1) and (2) follow from the corresponding facts about $\rho_{m,n}^\pm$, namely, the commutativity of the triangle
	\[
	\begin{tikzcd}
		\h^\sC(U)\otimes F_i(Y) \otimes \h^\sC(\A^1) \ar{dr}{\id\otimes\rho_{m,n}^\pm} \ar{d} & \\
		F_i(U\times Y)\otimes\h^\sC(\A^1) \ar{r}[swap]{\rho_{m,n}^\pm} & \h^\sC(U\times Y)
	\end{tikzcd}
	\]
	and the naturality of $\rho_{m,n}^\pm$ in $Y\in\Sm_k$.
	As in the proof of \cite[Theorem 3.5]{BachmannFFlatCancellation}, condition (3) reduces to the existence of an $\A^1$-homotopy
	\[
	\rho^+_2(\id_{\G_m}) \simeq_{\A^1} \id + \rho^-_2(\id_{\G_m})
	\]
	between endomorphisms of $\Spec k$ in $\sC$. Here, $\rho_2^+(\id_{\G_m})$ and $\rho_2^-(\id_{\G_m})$ are the framed finite syntomic $k$-schemes $\Spec k[t]/(t^2+1)$ and $\Spec k[t]/(t^2+t)$, respectively.
	Let $H\subset \A^2=\Spec k[s,t]$ be the vanishing locus of $t^2+st+1-s$. Then $H$ is framed finite syntomic over $\A^1=\Spec k[s]$ and defines an $\A^1$-homotopy from $\rho_2^+(\id_{\G_m})$ to $\id + \rho^-_2(\id_{\G_m})$, as desired.
	
	For rational contractibility, the proof of \cite[Proposition 3.8]{BachmannFFlatCancellation} applies with no significant changes. Indeed, one can replace $\h^\fflat$ by $\h^\sC$ in the statement and proof of \cite[Proposition 3.7]{BachmannFFlatCancellation}: it suffices to note that the morphism $s\colon \h^\fflat(X)\to \hat C_1\h^\fflat(X)$ constructed in \emph{loc.\ cit.}\ extends in an obvious way to a morphism $s\colon \h^\sC(X)\to \hat C_1\h^\sC(X)$.
\end{proof}

We can now prove the universality of hermitian K-theory as a generalized motivic cohomology theory with oriented finite Gorenstein transfers, in the following strong sense.

\begin{thm}\label{thm:kq-modules}
Let $k$ be a field of exponential characteristic $e \ne 2$. Then there is an equivalence of symmetric monoidal $\infty$-categories 
\[\Mod_\kq \SH(k)[\tfrac{1}{e}] \simeq  \SH^{\fgor,\o}(k)[\tfrac{1}{e}],\]
which is compatible with the forgetful functors to $\SH(k)$.
\end{thm}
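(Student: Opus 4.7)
The plan is to recognize the theorem as an instance of monadicity applied to the adjunction
\[
U_\sharp\colon \SH^\fr(k) \rightleftarrows \SH^{\fgor,\o}(k) \colon U^*
\]
induced by the symmetric monoidal forgetful functor $U\colon \Span^\fr(\Sm_k)\to\Span^{\fgor,\o}(\Sm_k)$, identifying $\SH^\fr(k)\simeq \SH(k)$ via the reconstruction theorem \cite[Theorem~18]{framed-loc}. Since $U_\sharp$ is symmetric monoidal, once one knows that $U^*$ preserves colimits, is conservative, and $U_\sharp$ satisfies the projection formula, Barr--Beck--Lurie yields a symmetric monoidal equivalence $\SH^{\fgor,\o}(k)\simeq \Mod_A\SH(k)$ compatible with forgetful functors, where $A=U^*\mathbf{1}_{\SH^{\fgor,\o}(k)}$ as an $\Einfty$-algebra in $\SH(k)$.

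First I would identify $A\simeq \kq_k$. At the presheaf level $U^*\h^{\fgor,\o}(\Spec k)$ is $\FGor^\o_k$ equipped with its canonical framed transfers, so after $\T$-suspension and motivic localization Theorem~\ref{thm:KQ-kq-FGor}(2) yields $A\simeq \Sigma^\infty_{\T,\fr}\FGor^\o_k\simeq \kq_k$. Next, the projection formula $U_\sharp(M\otimes U^*N)\simeq U_\sharp M\otimes N$ can be checked on representables by smooth $k$-schemes, where it reduces to the manifest compatibility of oriented finite Gorenstein correspondences with products and with smooth base change; this also forces $U^*$ to preserve colimits.

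The main obstacle, and the reason one inverts $e$, is the conservativity of $U^*$. The plan here is to imitate the argument for the finite flat case treated in \cite{BachmannFFlatCancellation} and \cite{robbery}. The two key inputs are cancellation and rational contractibility for $\Span^{\fgor,\o}(\Sm_k)$, both of which are provided by Proposition~\ref{prop: cancellation + rat contract}. Cancellation ensures that $\SH^{\fgor,\o}(k)$ is compactly generated by $\T$-suspension spectra of smooth $k$-schemes and that evaluation on such objects is conservative; rational contractibility controls the motivic localization of the representables $\h^{\fgor,\o}(X)$ via a Suslin-style simplicial construction. Combining these with Popescu approximation (when $k$ is perfect) and Gabber $\ell'$-alterations---the latter being the source of the $\tfrac{1}{e}$---one reduces the unit maps $X\to U^*U_\sharp X$ on a set of compact generators to the cancellation isomorphisms, yielding conservativity of $U^*[\tfrac{1}{e}]$.

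Assembling these ingredients produces the desired symmetric monoidal equivalence $\SH^{\fgor,\o}(k)[\tfrac{1}{e}]\simeq \Mod_{\kq}\SH(k)[\tfrac{1}{e}]$, with compatibility with the forgetful functors to $\SH(k)$ built into the construction of the adjunction $U_\sharp\dashv U^*$.
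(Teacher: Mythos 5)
Your overall framework is right — the proof does proceed via the adjunction induced by the forgetful functor $\gamma\colon\Span^\fr(\Sm_k)\to\Span^{\fgor,\o}(\Sm_k)$ and the identification $\gamma_*\MonUnit\simeq\kq$ — but you have mislocated where the two key technical inputs enter, and this obscures a real gap.

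First, cancellation and rational contractibility (Proposition~\ref{prop: cancellation + rat contract}) are not used for conservativity. They are used, via \cite[Corollary~2.20]{BachmannFFlatCancellation}, to show $\gamma_*(\MonUnit)\simeq\Sigma^\infty_{\T,\fr}\FGor^\o_k$. This is not ``manifest at the presheaf level'' as you suggest: $\gamma_*$ is a right adjoint, so it commutes with $\Omega^\infty_\T$, not with $\Sigma^\infty_\T$, and one genuinely needs the cancellation theorem to control the $\T$-spectrification on the $\SH^{\fgor,\o}$-side. Without this step, you cannot identify the endomorphism monad with $\kq\otimes(-)$ even if you have a projection formula.

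Second, you attribute the inversion of $e$ to conservativity of $\gamma_*$, and invoke Gabber alterations. Neither is what happens. Conservativity of $\gamma_*$ is unconditional (it is restriction along an essentially surjective functor, propagated through the localizations), and requires no $e$-inversion. The $e$-inversion enters at a different point: having built the induced adjunction $\Mod_\kq\SH(k)\rightleftarrows\SH^{\fgor,\o}(k)$ (which exists simply because $\gamma_*$ is lax monoidal with $\gamma_*\MonUnit\simeq\kq$), one needs the left adjoint to be fully faithful. The unit of the adjunction is an equivalence on $\kq$ by construction, hence on every dualizable $\kq$-module, and full faithfulness then follows from the fact that $\SH(k)[\tfrac 1e]$ is generated under colimits by dualizable objects \cite[Theorem 3.2.1]{shperf}. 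That generation statement is the (sole) source of $\tfrac 1e$ in the argument; Gabber alterations do not appear (they may feature in the proof of the cited generation result, but that is immaterial here). Once the left adjoint is fully faithful and the right adjoint conservative, the adjunction is an equivalence — the paper never invokes Barr--Beck--Lurie, nor does it need to verify a projection formula or preservation of split simplicial objects; the direct argument is shorter and avoids those checks, which in your plan are asserted but not actually reduced to anything verifiable.

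So the gap in your proposal is not the scaffolding but the calibration: your cancellation/rational-contractibility input needs to move to the identification of $\gamma_*\MonUnit$, and your $\tfrac 1e$ needs to move from conservativity of $U^*$ to generation of $\SH(k)[\tfrac 1e]$ by dualizables, applied to show full faithfulness of the induced left adjoint from $\Mod_\kq\SH(k)$.
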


\begin{proof}
	The symmetric monoidal forgetful functor $\gamma\colon \Span^\fr(\Sm_k)\to\Span^{\fgor,\o}(\Sm_k)$ gives rise to an adjunction
\[
\gamma^*: \SH(k)\simeq\SH^\fr(k) \rightleftarrows \SH^{\fgor,\o}(k) : \gamma_*
\]
where the left adjoint $\gamma^*$ is symmetric monoidal. We claim that the right adjoint $\gamma_*$ sends the unit to the $\Einfty$-algebra $\mathrm{kq}$. To prove this, since $\gamma_*$ commutes with pro-smooth base change, we can assume that $k$ is perfect. Then the cancellation and rational contractibility properties of the category of oriented Gorenstein correspondences obtained in Proposition~\ref{prop: cancellation + rat contract}, together with \cite[Corollary~2.20]{BachmannFFlatCancellation}, imply that
\[ \gamma_* (\MonUnit) \simeq \Sigma^\infty_{\T, \fr} \FGor^\o_k. \]
As we showed in Theorem~\ref{thm:KQ-kq-FGor},  $\Sigma^\infty_{\T, \fr} \FGor^\o_k \simeq \kq$.

We therefore obtain an induced adjunction
\[
\Mod_{\mathrm{kq}}(\SH(k))\rightleftarrows \SH^{\fgor,\o}(k),
\]
which we claim is an equivalence after inverting $e$. Since the right adjoint is conservative, it suffices to show that the left adjoint is fully faithful. By construction, the unit of the adjunction is an equivalence on the unit object $\mathrm{kq}$, hence on any dualizable object. But $\SH(k)[\tfrac 1e]$ is generated under colimits by dualizable objects \cite[Theorem 3.2.1]{shperf}, so the claim follows.
\end{proof}

\section{Summary of framed models for motivic spectra}
\label{sec:framed-models}

In this final section, we offer a summary of the known geometric models for the most common motivic spectra. For simplicity, we first state the results in the regular equicharacteristic case; for the state of the art see Remark~\ref{rem:general base}.

\begin{thm}\label{thm:all mot spectra}
Let $S$ be a regular scheme over a field. Then the forgetful maps of $\Einfty$-semirings 
\[\begin{tikzcd}
      & \FSyn  \ar{r} & \FFlat  \ar{r} & \Vect \ar{r} & \uZ \\
   \FSyn^\fr \ar{r} & \FSyn^\o \ar{u} \ar{r} & \FGor^\o \ar{r} \ar{u}& \Vect^\sym \ar{u}\ar{r} & \uGW\ar{u} 
\end{tikzcd}\]
induce, upon taking framed suspension spectra, canonical maps of motivic $\Einfty$-ring spectra over $S$ (assuming $2\in\sO(S)^\times$ for $\kq$):
\[\begin{tikzcd}
      & \MGL  \ar{r} & \kgl  \ar["\simeq"]{r} & \kgl \ar{r} & \hz \\
   \MonUnit \ar{r} & \MSL \ar{u} \ar{r} & \kq \ar["\simeq"]{r} \ar{u}& \kq \ar{u}\ar{r} & \hzmw \ar{u} \rlap .
\end{tikzcd}\]
\end{thm}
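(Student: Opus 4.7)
Since the statement is a compilation of identifications, the proof amounts to assembling the framed geometric models of the nine stacks with their corresponding motivic ring spectra, and then checking that the forgetful maps between stacks induce the canonical morphisms on the spectrum side.

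For the vertex identifications I would invoke: the motivic recognition principle \cite[Theorem~3.5.14]{deloop1} for $\Sigma^\infty_{\T,\fr}\FSyn^\fr_S\simeq\MonUnit$; \cite[Theorems~3.4.1 and 3.4.3]{deloop3} for $\Sigma^\infty_{\T,\fr}\FSyn_S\simeq\MGL$ and $\Sigma^\infty_{\T,\fr}\FSyn^\o_S\simeq\MSL$; \cite[Theorem~5.4]{robbery} for $\Sigma^\infty_{\T,\fr}\FFlat_S\simeq\kgl$; Theorem~\ref{thm:KQ-kq-FGor}(2) for $\Sigma^\infty_{\T,\fr}\FGor^\o_S\simeq\kq$; Proposition~\ref{prop:KQ-kq-Vect^bil}(2) for $\Sigma^\infty_{\T,\fr}\Vect^\sym_S\simeq\kq$; \cite[Theorem~21]{framed-loc} for $\Sigma^\infty_{\T,\fr}\uZ_S\simeq\hz$; and Theorem~\ref{thm:HtildeZ} for $\Sigma^\infty_{\T,\fr}\uGW_S\simeq\hzmw$. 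The remaining identification $\Sigma^\infty_{\T,\fr}\Vect_S\simeq\kgl$ then follows from the previous ones together with \cite[Theorem~3.1]{robbery}.

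Next I would identify the morphisms. Since all vertices carry canonical $\Einfty$-ring structures and the forgetful maps of stacks are morphisms of $\Einfty$-semirings in $\Pre_\Sigma(\Span^\fr(\Sm_S))$, the induced maps on framed suspension spectra are automatically $\Einfty$-maps. The two horizontal ``$\simeq$'' equivalences are precisely $\Sigma^\infty_{\T,\fr}$ applied to the $\A^1$-equivalences after group completion of Theorem~\ref{thm:main-gp} and \cite[Theorem~3.1]{robbery}. The remaining arrows $\MonUnit\to\MSL\to\MGL$, $\MGL\to\kgl$, $\kq\to\kgl$ and $\MSL\to\kq$ (the last being the $\SL$-orientation of Remark~\ref{rem:SL-orientation}) are either already constructed in \cite{deloop3} and \cite{robbery} as the images of the corresponding forgetful stack maps, or recovered here as the unique $\Einfty$-ring maps satisfying the appropriate universal properties. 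Commutativity of the full diagram then follows by the $\Einfty$-naturality of the comparison equivalences.

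The main obstacle is to verify that the rightmost vertical morphisms $\kgl\to\hz$ and $\kq\to\hzmw$, induced by the rank maps $\Vect\to\uZ$ and $\Vect^\sym\to\uGW$, coincide with the canonical truncation maps onto the effective zero-slices. For $\kgl\to\hz$ this follows from \cite[Theorem~21]{framed-loc} together with the effective-cover description of $\hz$; for $\kq\to\hzmw$ it requires Theorem~\ref{thm:HtildeZ} together with Lemma~\ref{lem:hzmw-transfers}, which identifies $\Omega^\infty_{\T,\fr}\hzmw$ with $\uGW$ compatibly with both the unit map and the framed transfers. The commutativity of the rightmost vertical square $\hzmw\to\hz$ versus $\uGW\to\uZ$ is then forced by the uniqueness of the canonical rank map out of $\uGW$ among morphisms of presheaves with framed transfers.
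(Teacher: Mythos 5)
Your proposal is correct and takes essentially the same route as the paper: both proofs simply assemble the previously established framed models at each vertex (\cite[Theorems~3.4.1 and~3.4.3]{deloop3} for $\MGL$ and $\MSL$, \cite{robbery} for $\kgl$, Proposition~\ref{prop:KQ-kq-Vect^bil} and Theorem~\ref{thm:KQ-kq-FGor} for $\kq$, \cite[Theorem~21]{framed-loc} for $\hz$, Theorem~\ref{thm:HtildeZ} for $\hzmw$) and note that the forgetful maps of $\Einfty$-semirings induce the displayed $\Einfty$-ring maps under $\Sigma^\infty_{\T,\fr}$. The only cosmetic differences are that the paper obtains $\MonUnit$ directly from symmetric monoidality of $\Sigma^\infty_{\T,\fr}$ rather than via the recognition principle, cites \cite[Corollary~5.2]{robbery} for the $\Vect$ vertex instead of deducing it from the $\FFlat$ model, and defers your additional identification of the maps to $\hz$ and $\hzmw$ with the canonical truncations to the remarks following the theorem.
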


\begin{proof}
The functor $\Sigma^\infty_{\T,\fr}$ is symmetric monoidal, so it takes the unit $\FSyn^\fr$ in presheaves with framed transfers to the unit $\MonUnit$ in motivic spectra.
The leftmost vertical arrow is a consequence of \cite[Theorems~3.4.1 and~3.4.3]{deloop3}. The top row is contained in \cite[Corollary~5.2, Theorem~5.4]{robbery} and \cite[Theorem~21]{framed-loc}, while the bottom row follows from Proposition~\ref{prop:KQ-kq-Vect^bil}, Theorem~\ref{thm:KQ-kq-FGor}, and Theorem~\ref{thm:HtildeZ}.
\end{proof}

\begin{rem}\label{rem:general base}
	The statements about $\MonUnit$, $\MSL$, $\MGL$, and $\hz$ in Theorem~\ref{thm:all mot spectra} are in fact proved over general base schemes in the given references. 
	The statement about $\hzmw$ holds under the assumption of Theorem~\ref{thm:HtildeZ}, and the statements about $\kgl$ and $\kq$ were recently extended by Bachmann to the same generality \cite{BachmannKGL}.
\end{rem}

\begin{cor}
On the level of infinite $\P^1$-loop spaces, the diagram of motivic spectra in Theorem~\ref{thm:all mot spectra} is the motivic localization of the following diagram of forgetful maps (up to the Nisnevich-local plus construction for the left half in characteristic zero):
\[\begin{tikzcd}
   &\Z\times \Hilb_\infty^\lci(\A^\infty)  \ar{r} & \Z\times \Hilb_\infty(\A^\infty) \ar{r} & \uZ \\
   \Z\times \Hilb_\infty^\fr(\A^\infty) \ar{r} &    \Z\times \Hilb_\infty^{\lci,\o}(\A^\infty) \ar{u} \ar{r} & \Z\times \Hilb_\infty^{\Gor,\o}(\A^\infty) \ar{r} \ar{u} & \uGW\ar{u}\rlap.
\end{tikzcd}\]
\end{cor}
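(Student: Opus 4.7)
The plan is to combine the framed suspension spectrum models of Theorem~\ref{thm:all mot spectra} with the motivic recognition principle \cite[Theorem~3.5.14]{deloop1} and the Hilbert scheme comparison of Corollary~\ref{cor:Hilb-FGor} together with its analogues. Since each spectrum $E$ appearing in Theorem~\ref{thm:all mot spectra} is very effective and of the form $E \simeq \Sigma^\infty_{\T,\fr}\mathcal{F}_E$ for a presheaf of $\Einfty$-monoids $\mathcal{F}_E$ (for instance $\mathcal{F}_{\kq} = \FGor^\o$ and $\mathcal{F}_{\MonUnit} = \FSyn^\fr$), the recognition principle yields $\Omega^\infty_\T E \simeq L_\mot(\mathcal{F}_E^\gp)$, where group completion is taken in presheaves of framed $\Einfty$-monoids. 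The task therefore reduces to identifying these motivically-localized group completions with $L_\mot$ of the corresponding Hilbert scheme objects, possibly up to a plus construction.

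For the right half of the diagram ($\kgl$, $\hz$, $\kq$, $\hzmw$), the relevant presheaves $\FFlat$, $\FGor^\o$, $\Vect$, $\Vect^\sym$, $\uZ$, $\uGW$ are $1$-truncated, so group completion is computed objectwise. Corollary~\ref{cor:Hilb-FGor} supplies the $\A^1$-equivalence $\uZ \times \Hilb^{\Gor,\o}_\infty(\A^\infty) \simeq \FGor^{\o,\gp}$ on affine schemes, and the analogous statement for $\FFlat$ is available from \cite{robbery}; the cases of $\uZ$ and $\uGW$ are tautological. After motivic localization, these $\A^1$-equivalences extend from affine to all smooth schemes, matching the right half of the infinite $\P^1$-loop space diagram with the motivic localization of the right half of the Hilbert scheme diagram.

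For the left half ($\MonUnit$, $\MSL$, $\MGL$), the framed presheaves $\FSyn^\fr$, $\FSyn^\o$, $\FSyn$ are genuine $\infty$-groupoids, so their framed group completion carries higher homotopy not seen directly by the Hilbert scheme. The Hilbert schemes $\uZ \times \Hilb^{\fr}_\infty(\A^\infty)$, $\uZ \times \Hilb^{\lci,\o}_\infty(\A^\infty)$, and $\uZ \times \Hilb^{\lci}_\infty(\A^\infty)$ are motivically equivalent to $\mathcal{F}_E$ on affine schemes, by arguments parallel to Proposition~\ref{prop:Hilb-FGor}, and the passage from $\mathcal{F}_E$ to $\mathcal{F}_E^\gp$ is then implemented by a Nisnevich-local plus construction. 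In characteristic zero this plus construction is compatible with motivic localization and framed group completion, and combining it with the motivic-local identifications just described gives the desired comparison.

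The main obstacle is precisely the left half: one must verify that the Nisnevich-local plus construction, motivic localization, and framed group completion interact as expected on presheaves of $\Einfty$-monoids valued in $\infty$-groupoids. This is where the characteristic-zero hypothesis enters, through resolution of singularities arguments needed to ensure that the plus construction correctly computes the framed group completion on higher homotopy after motivic localization. The other comparisons, by contrast, reduce essentially to invoking the framed models of Theorem~\ref{thm:all mot spectra} and the Hilbert scheme equivalences already established.
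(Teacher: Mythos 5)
Your overall strategy coincides with the paper's: deduce the statement from Theorem~\ref{thm:all mot spectra} together with the Hilbert-scheme comparisons. For the right half of the diagram this works and is essentially what the paper does: the $\kgl$/$\hz$ column is \cite[Corollary~4.5]{robbery} and the $\kq$/$\hzmw$ column is Corollary~\ref{cor:Hilb-FGor}. (One small caveat: the point there is not that group completion is ``computed objectwise'' for $1$-truncated presheaves --- $\gp$ is objectwise by definition throughout --- but that the stabilized telescope agrees with the group completion after $\A^1$-localization on affines, which is Lemma~\ref{lem:invert-hyp}(3) and Remark~\ref{rem:FGor-gp}; this is already packaged into Corollary~\ref{cor:Hilb-FGor} and \cite[Corollary~4.5]{robbery}.)

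The genuine gap is in the left half. For $\MonUnit$, $\MSL$, $\MGL$ the comparison between $\Z\times\Hilb^\fr_\infty(\A^\infty)$, $\Z\times\Hilb^{\lci,\o}_\infty(\A^\infty)$, $\Z\times\Hilb^{\lci}_\infty(\A^\infty)$ and the infinite $\P^1$-loop spaces is \emph{not} obtained by ``arguments parallel to Proposition~\ref{prop:Hilb-FGor}'' plus the assertion that ``the Nisnevich-local plus construction is compatible with motivic localization and framed group completion in characteristic zero.'' That assertion is precisely the hard content here: since $\FSyn^\fr$, $\FSyn^\o$, $\FSyn$ are not group-complete and their group completions carry higher homotopy, one must show that the motivic localization of the plus construction of the (Nisnevich-sheafified) Hilbert scheme computes $\Omega^\infty_\T$ of the corresponding framed suspension spectrum; this is the main theorem of \cite[Theorems~1.2 and~1.5]{deloop4}, which is what the paper cites, and its proof involves substantially more than a compatibility check (in particular the comparison of the framed Hilbert scheme with $\FSyn^\fr$ and a delicate analysis of plus construction versus group completion after $L_\mot$). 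Your attribution of the characteristic-zero hypothesis to ``resolution of singularities arguments'' is also speculative and not a substitute for the actual input. The fix is simply to invoke \cite[Theorems~1.2 and~1.5]{deloop4} for the left half, as the paper does, rather than sketching a re-proof.
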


\begin{proof}
    This follows from Theorem~\ref{thm:all mot spectra} using \cite[Theorems 1.2 and~1.5]{deloop4}, \cite[Corollary~4.5]{robbery}, and Corollary~\ref{cor:Hilb-FGor}.
\end{proof}

Let us conclude with some comments on the ``canonical maps'' in Theorem~\ref{thm:all mot spectra}.
First, we note that the $\Einfty$-maps to $\hzmw$ and $\hz$ in the diagram are all \emph{unique}. For $\hzmw$, this is because $\hzmw=\underline\pi^\eff_0(\1)$ and the unit maps of all the spectra in the lower row induce equivalences on $\underline\pi^\eff_0$ (cf.\ \cite[Corollary~3.6.7]{MuraMSL}). Similarly, we have $\hz=s_0(\1)$ \cite[Theorem 10.5.1]{Levine:2008} and the unit maps of all spectra except $\hzmw$ induce equivalences on $s_0$ (see \cite[Remark 10.2]{SpitzweckHZ}, \cite[Example 16.35]{norms}, and \cite[Theorem 3.2]{kq-slices}). The $\Einfty$-map $\hzmw\to \hz$ is also unique since $\hz$ is in the heart of the effective homotopy $t$-structure.

The $\Einfty$-map $\MGL\to\kgl$ in Theorem~\ref{thm:all mot spectra} was shown in \cite[Proposition~6.2]{robbery} to induce the usual Thom classes in algebraic K-theory, i.e., it is an $\Einfty$ refinement of the usual orientation map obtained from the universal property of $\MGL$ as a homotopy commutative ring spectrum \cite[Theorem 2.7]{Panin:2008}. A similar argument will show that the $\Einfty$-map $\MSL\to\kq$ induces the usual Thom classes of oriented vector bundles in Grothendieck–Witt theory, once one promotes the forgetful map $\FQSm^{\o,4*}\to \Perf^\sym$ to a morphism of framed $\Einfty$-semirings (which requires some work but should present no essential difficulty). By \cite[Theorem 5.9]{PaninWalter}, it is known that there exists at least one unital morphism $\MSL\to\kq$ inducing the usual Thom classes in Grothendieck–Witt theory, but its uniqueness and multiplicativity properties are unclear.

To the best of our knowledge, the existence of $\Einfty$-maps $\MGL\to \kgl$ and $\MSL\to\kq$ was not known before the main result of \cite{deloop3}, which describes $\MGL$ and $\MSL$ as framed suspension spectra.

\let\mathbb=\mathbf
{\small
\newcommand{\etalchar}[1]{$^{#1}$}
\providecommand{\bysame}{\leavevmode\hbox to3em{\hrulefill}\thinspace}

}
\parskip 0pt


\begin{thebibliography}{CDH{\etalchar{+}}20b}

\bibitem[AAM12]{Ananthnarayan_Connected_sums}
H.~Ananthnarayan, L.~L. Avramov, and W.~F. Moore, \emph{Connected sums of
  {G}orenstein local rings}, J. Reine Angew. Math. \textbf{667} (2012),
  pp.~149--176,  \url{https://doi.org/10.1515/crelle.2011.132}

\bibitem[AR{\O}20]{kq-slices}
A.~Ananyevskiy, O.~R{\"o}ndigs, and P.~A. {\O}stv{\ae}r, \emph{On very
  effective hermitian $K$-theory}, Math. Z. \textbf{294} (2020), pp.~1021--1034

\bibitem[Bac17]{BachmannSlices}
T.~Bachmann, \emph{The generalized slices of Hermitian $K$-theory}, J. Topol.
  \textbf{10} (2017), no.~4, pp.~1124--1144

\bibitem[Bac21]{BachmannFFlatCancellation}
\bysame, \emph{Cancellation theorem for motivic spaces with finite flat
  transfers}, Doc. Math. \textbf{26} (2021), pp.~1121--1144, preprint
  \href{http://arxiv.org/abs/2006.14934}{{\sf arXiv:2006.14934}}

\bibitem[Bac22a]{BachmannDedekind}
\bysame, \emph{$\eta$-periodic motivic stable homotopy theory over Dedekind
  domains}, J. Topol. \textbf{15} (2022), no.~2, pp.~950--971, preprint
  \href{http://arxiv.org/abs/2006.02086}{{\sf arXiv:2006.02086}}

\bibitem[Bac22b]{BachmannKGL}
\bysame, \emph{The very effective covers of KO and KGL over Dedekind schemes},
  2022, \href{http://arxiv.org/abs/2201.02786}{{\sf arXiv:2201.02786}}

\bibitem[BCS97]{burgisser_clausen_shokrollahi}
P.~B{\"u}rgisser, M.~Clausen, and M.~A. Shokrollahi, \emph{Algebraic complexity
  theory}, Grundlehren der Mathematischen Wissenschaften [Fundamental
  Principles of Mathematical Sciences], vol. 315, Springer-Verlag, Berlin,
  1997, With the collaboration of Thomas Lickteig

\bibitem[BEH{\etalchar{+}}21]{deloop4}
T.~Bachmann, E.~Elmanto, M.~Hoyois, A.~A. Khan, V.~Sosnilo, and M.~Yakerson,
  \emph{On the infinite loop spaces of algebraic cobordism and the motivic
  sphere}, \'Epijournal G\'eom. Alg\'ebrique \textbf{5} (2021), preprint
  \href{http://arxiv.org/abs/1911.02262}{{\sf arXiv:1911.02262}}

\bibitem[BF18]{BachmannFasel}
T.~Bachmann and J.~Fasel, \emph{On the effectivity of spectra representing
  motivic cohomology theories}, 2018,
  \href{http://arxiv.org/abs/1710.00594v3}{{\sf arXiv:1710.00594v3}}

\bibitem[BH21]{norms}
T.~Bachmann and M.~Hoyois, \emph{Norms in motivic homotopy theory},
  Ast{\'e}risque \textbf{425} (2021), preprint
  \href{http://arxiv.org/abs/1711.03061}{{\sf arXiv:1711.03061}}

\bibitem[BLR90]{NeronModels}
S.~Bosch, W.~L{\"u}tkebohmert, and M.~Raynaud, \emph{N{\'e}ron Models},
  Springer, 1990

\bibitem[BW21]{BachmannWickelgren}
T.~Bachmann and K.~Wickelgren, \emph{Euler classes: six functors formalisms,
  dualities, integrality and linear subspaces of complete intersections}, to
  appear in J. Inst. Math. Jussieu (2021), preprint
  \href{http://arxiv.org/abs/2002.01848}{{\sf arXiv:2002.01848}}

\bibitem[BY20]{BachmannYakerson}
T.~Bachmann and M.~Yakerson, \emph{Towards conservativity of $\mathbb
  G_m$-stabilization}, Geom. Topol. \textbf{24} (2020), no.~4, pp.~1969--2034

\bibitem[CDH{\etalchar{+}}20a]{HermitianI}
B.~Calm{\`e}s, E.~Dotto, Y.~Harpaz, F.~Hebestreit, M.~Land, K.~Moi, D.~Nardin,
  T.~Nikolaus, and W.~Steimle, \emph{Hermitian {K}-theory for stable
  $\infty$-categories {I}: {F}oundations}, 2020,
  \href{http://arxiv.org/abs/2009.07223}{{\sf arXiv:2009.07223}}

\bibitem[CDH{\etalchar{+}}20b]{HermitianII}
\bysame, \emph{Hermitian {K}-theory for stable $\infty$-categories {II}:
  {C}obordism categories and additivity}, 2020,
  \href{http://arxiv.org/abs/2009.07224}{{\sf arXiv:2009.07224}}

\bibitem[CHN22]{CalmesNardinHarpaz}
B.~Calm{\`e}s, Y.~Harpaz, and D.~Nardin, \emph{A motivic spectrum representing
  hermitian K-theory}, in preparation, 2022

\bibitem[CM21]{ClausenMathew}
D.~Clausen and A.~Mathew, \emph{Hyperdescent and {\'e}tale K-theory}, Invent.
  Math. \textbf{225} (2021), pp.~981--1076, preprint
  \href{http://arxiv.org/abs/1905.06611v3}{{\sf arXiv:1905.06611v3}}

\bibitem[CN07]{Casnati_Notari_6points}
G.~Casnati and R.~Notari, \emph{On some {G}orenstein loci in {$Hilb^6({\mathbb
  P}\sp 4\sb k)$}}, J. Algebra \textbf{308} (2007), no.~2, pp.~493--523,
  \url{http://dx.doi.org/10.1016/j.jalgebra.2006.09.023}

\bibitem[EHK{\etalchar{+}}20a]{deloop2}
E.~Elmanto, M.~Hoyois, A.~A. Khan, V.~Sosnilo, and M.~Yakerson, \emph{Framed
  transfers and motivic fundamental classes}, J.~Topol. \textbf{13} (2020),
  pp.~460--500

\bibitem[EHK{\etalchar{+}}20b]{deloop3}
\bysame, \emph{Modules over algebraic cobordism}, Forum Math. Pi \textbf{8}
  (2020), p.~e14

\bibitem[EHK{\etalchar{+}}21]{deloop1}
\bysame, \emph{Motivic infinite loop spaces}, Cambridge J. Math. \textbf{9}
  (2021), no.~2, pp.~431--549, preprint
  \href{http://arxiv.org/abs/1711.05248}{{\sf arXiv:1711.05248}}

\bibitem[Eis13]{Eisenbud}
D.~Eisenbud, \emph{Commutative Algebra: with a view toward algebraic geometry},
  vol. 150, Springer Science \& Business Media, 2013

\bibitem[EK19]{shperf}
E.~Elmanto and A.~A. Khan, \emph{Perfection in motivic homotopy theory}, Proc.
  London Math. Soc. \textbf{120} (2019), pp.~28--38

\bibitem[Gil03a]{GilleWittGroups}
S.~Gille, \emph{Homotopy invariance of coherent Witt groups}, Math. Z.
  \textbf{244} (2003), no.~2, pp.~211--233

\bibitem[Gil03b]{GilleTransfers}
S.~Gille, \emph{A transfer morphism for Witt groups}, J. reine angew. Math.
  \textbf{564} (2003), pp.~215--233

\bibitem[GP21]{garkusha2014framed}
G.~Garkusha and I.~Panin, \emph{Framed motives of algebraic varieties (after V.
  Voevodsky)}, J. Amer. Math. Soc. \textbf{34} (2021), pp.~261--313

\bibitem[Har10]{HarDeform}
R.~Hartshorne, \emph{Deformation theory}, Graduate Texts in Mathematics, vol.
  257, Springer, New York, 2010

\bibitem[HJN{\etalchar{+}}21]{robbery}
M.~Hoyois, J.~Jelisiejew, D.~Nardin, B.~Totaro, and M.~Yakerson, \emph{The
  Hilbert scheme of infinite affine space and algebraic K-theory}, to appear in
  J. Eur. Math. Soc., 2021, \href{http://arxiv.org/abs/2002.11439}{{\sf
  arXiv:2002.11439}}

\bibitem[HM73]{HM}
D.~Husemoller and J.~W. Milnor, \emph{Symmetric Bilinear Forms}, Springer, 1973

\bibitem[Hor05]{Hornbostel:2005}
J.~Hornbostel, \emph{$\A^1$-representability of Hermitian K-theory and Witt
  groups}, Topology \textbf{44} (2005), no.~3, pp.~661--687

\bibitem[Hoy20]{HoyoisCdh}
M.~Hoyois, \emph{Cdh descent in equivariant homotopy $K$-theory}, Doc. Math.
  \textbf{25} (2020), pp.~457--482

\bibitem[Hoy21]{framed-loc}
M.~Hoyois, \emph{The localization theorem for framed motivic spaces}, Compos.
  Math. \textbf{157} (2021), no.~1, pp.~1--11

\bibitem[HS21]{HebestreitSteimle}
F.~Hebestreit and W.~Steimle, \emph{Stable moduli spaces of hermitian forms},
  2021, \href{http://arxiv.org/abs/2103.13911}{{\sf arXiv:2103.13911}}

\bibitem[Jac18]{Jacobson}
J.~A. Jacobson, \emph{Cohomological invariants for quadratic forms over local
  rings}, Math. Ann. \textbf{370} (2018), pp.~309--329

\bibitem[J{\v{S}}21]{Jelisiejew_Sivic}
J.~Jelisiejew and K.~{\v{S}}ivic, \emph{Components and singularities of {Q}uot
  schemes and varieties of commuting matrices}, 2021,
  \href{http://arxiv.org/abs/2106.13137}{{\sf arXiv:2106.13137}}

\bibitem[L{\'A}17]{AvilaThesis}
A.~L{\'o}pez-{\'A}vila, \emph{$E_\infty$-ring structures in Motivic Hermitian
  $K$-theory}, Ph.D. thesis, Universit{\"a}t Osnabr{\"u}ck, 2017, available at
  \url{https://osnadocs.ub.uni-osnabrueck.de/bitstream/urn:nbn:de:gbv:700-2018030216685/7/thesis_lopez-avila.pdf}

\bibitem[Lan17]{Landsberg__complexity_book}
J.~M. Landsberg, \emph{Geometry and complexity theory}, Cambridge Studies in
  Advanced Mathematics, vol. 169, Cambridge University Press, Cambridge, 2017,
  \url{https://doi.org/10.1017/9781108183192}

\bibitem[Lev08]{Levine:2008}
M.~Levine, \emph{The homotopy coniveau tower}, J. Topol. \textbf{1} (2008),
  pp.~217--267, preprint \href{http://arxiv.org/abs/math/0510334}{{\sf
  arXiv:math/0510334}}

\bibitem[LM17]{Landsberg_Michalek__Abelian_Tensors}
J.~M. Landsberg and M.~Micha{\l}ek, \emph{Abelian tensors}, J. Math. Pures
  Appl. (9) \textbf{108} (2017), no.~3, pp.~333--371,
  \url{https://doi.org/10.1016/j.matpur.2016.11.004}

\bibitem[Lur09]{LurieTFT}
J.~Lurie, \emph{On the Classification of Topological Field Theories}, 2009,
  \href{http://arxiv.org/abs/0905.0465}{{\sf arXiv:0905.0465}}

\bibitem[Lur17]{HA}
\bysame, \emph{Higher Algebra}, September 2017,
  \url{http://www.math.harvard.edu/~lurie/papers/HA.pdf}

\bibitem[Mor12]{Morel}
F.~Morel, \emph{$\mathbb{A}^1$-Algebraic Topology over a Field}, Lecture Notes
  in Mathematics, vol. 2052, Springer, 2012

\bibitem[MV99]{MV}
F.~Morel and V.~Voevodsky, \emph{$\mathbb{A}^1$-homotopy theory of schemes},
  Publ. Math. I.H.{\'E}.S. \textbf{90} (1999), pp.~45--143

\bibitem[PPR08]{Panin:2008}
I.~Panin, K.~Pimenov, and O.~R{\"o}ndigs, \emph{A universality theorem for
  {V}oevodsky's algebraic cobordism spectrum}, Homology Homotopy Appl.
  \textbf{10} (2008), no.~2, pp.~211--226, preprint
  \href{http://arxiv.org/abs/0709.4116}{{\sf arXiv:0709.4116}}

\bibitem[PW22]{PaninWalter}
I.~Panin and C.~Walter, \emph{On the algebraic cobordism spectra MSL and MSp},
  Algebra i Analiz \textbf{34} (2022), no.~1, pp.~144--187, preprint
  \href{http://arxiv.org/abs/1011.0651}{{\sf arXiv:1011.0651}}

\bibitem[Sch10]{Schlichting1}
M.~Schlichting, \emph{Hermitian $K$-theory of exact categories}, J. K-theory
  \textbf{5} (2010), no.~1, pp.~105--165

\bibitem[Sch17]{Schlichting3}
\bysame, \emph{Hermitian $K$-theory, derived equivalences and Karoubi's
  fundamental theorem}, J. Pure Appl. Algebra \textbf{221} (2017), no.~7,
  pp.~1729--1844

\bibitem[Spi18]{SpitzweckHZ}
M.~Spitzweck, \emph{A commutative $\mathbb{P}^1$-spectrum representing motivic
  cohomology over {D}edekind domains}, M{\'e}m. Soc. Math. Fr. \textbf{157}
  (2018), preprint \href{http://arxiv.org/abs/1207.4078}{{\sf arXiv:1207.4078}}

\bibitem[ST15]{SchlichtingTripathi}
M.~Schlichting and G.~S. Tripathi, \emph{Geometric models for higher
  Grothendieck--Witt groups in $\mathbb{A}^1$-homotopy theory}, Math. Ann.
  \textbf{362} (2015), no.~3, pp.~1143--1167

\bibitem[Stacks]{stacks}
{The Stacks Project Authors}, \emph{The Stacks Project},
  \url{http://stacks.math.columbia.edu}

\bibitem[Voe01]{voevodsky2001notes}
V.~Voevodsky, \emph{Notes on framed correspondences}, unpublished, 2001,
  \url{https://www.math.ias.edu/vladimir/node/98}

\bibitem[Yak19]{MuraMSL}
M.~Yakerson, \emph{The unit map of the algebraic special linear cobordism
  spectrum}, J. Inst. Math. Jussieu (2019), pp.~1--26,
  \url{https://doi.org/10.1017/S1474748019000720}

\end{thebibliography}
\end{document}